\title{The augmentation category map induced by exact Lagrangian cobordisms}
\author{Yu Pan}
\numberwithin{equation}{section}
\numberwithin{figure}{section}
\theoremstyle{plain}
\newtheorem{thm}{Theorem}[section]
\newtheorem{lem}[thm]{Lemma}
\newtheorem{cor}[thm]{Corollary}
\newtheorem{prop}[thm]{Proposition}
\theoremstyle{definition}
\newtheorem{defn}[thm]{Definition}
\newtheorem{eg}[thm]{Example}
\theoremstyle{remark}
\newtheorem{rmk}[thm]{Remark}
\newcommand{\bbR}{{\mathbb{R}}}
\newcommand{\bbZ}{{\mathbb{Z}}}
\newcommand{\bbF}{{\mathbb{F}}}
\newcommand{\fraka}{\mathfrak{a}}
\newcommand{\calA}{{\mathcal{A}}}
\newcommand{\calB}{{\mathcal{B}}}
\newcommand{\calM}{{\mathcal{M}}}
\newcommand{\calR}{{\mathcal{R}}}
\newcommand{\calT}{{\mathcal{T}}}
\newcommand{\diag}[1]{\begin{center}\begin{minipage}{5in}\xymatrix{#1}\end{minipage}\end{center}}
\def\arr{\ar[r]}
\def\ard{\ar[d]}
\def\aru{\ar[u]}
\tikzset{node distance=1.5cm, auto}
\begin{document}
\maketitle
\abstract 
To a Legendrian knot, one can associate an $A_{\infty}$ category, the augmentation category.
An exact Lagrangian cobordism between  two Legendrian knots gives a functor of the augmentation categories of the two knots.
We study the functor and establish a long exact sequence relating the corresponding cohomology of morphisms of the two ends.
As applications, we prove that the functor between augmentation categories is injective on the level of equivalence classes of  objects and 
find new obstructions to the existence of exact Lagrangian cobordisms in terms of linearized contact homology and ruling polynomials.

\endabstract

\section{Introduction}
Let $\Lambda_+$ and $\Lambda_-$ be Legendrian submanifolds in the standard contact manifold 
$(\bbR^3, \xi= \ker \alpha)$, where $\alpha = dz - y dx$.
An exact Lagrangian cobordism $\Sigma$ from $\Lambda_-$ to $\Lambda_+$ is a 
$2$-dimensional surface in the symplectization of $\bbR^3$ that has cylindrical ends over $\Lambda_+$ and $\Lambda_-$ with some properties. 
See  Figure \ref{lagcob} for a schematic picture and Definition \ref{cobdef} for a detailed description.

\begin{figure}[!ht]
\labellist
\small

\pinlabel $t$ at -10 350
\pinlabel $\Lambda_+$  at 350 310
\pinlabel $\Sigma$  at 350 220
\pinlabel $\Lambda_-$ at 350 120

\pinlabel $N$ at -10 250
\pinlabel $-N$ at -20 50
\endlabellist

\includegraphics[width=2in]{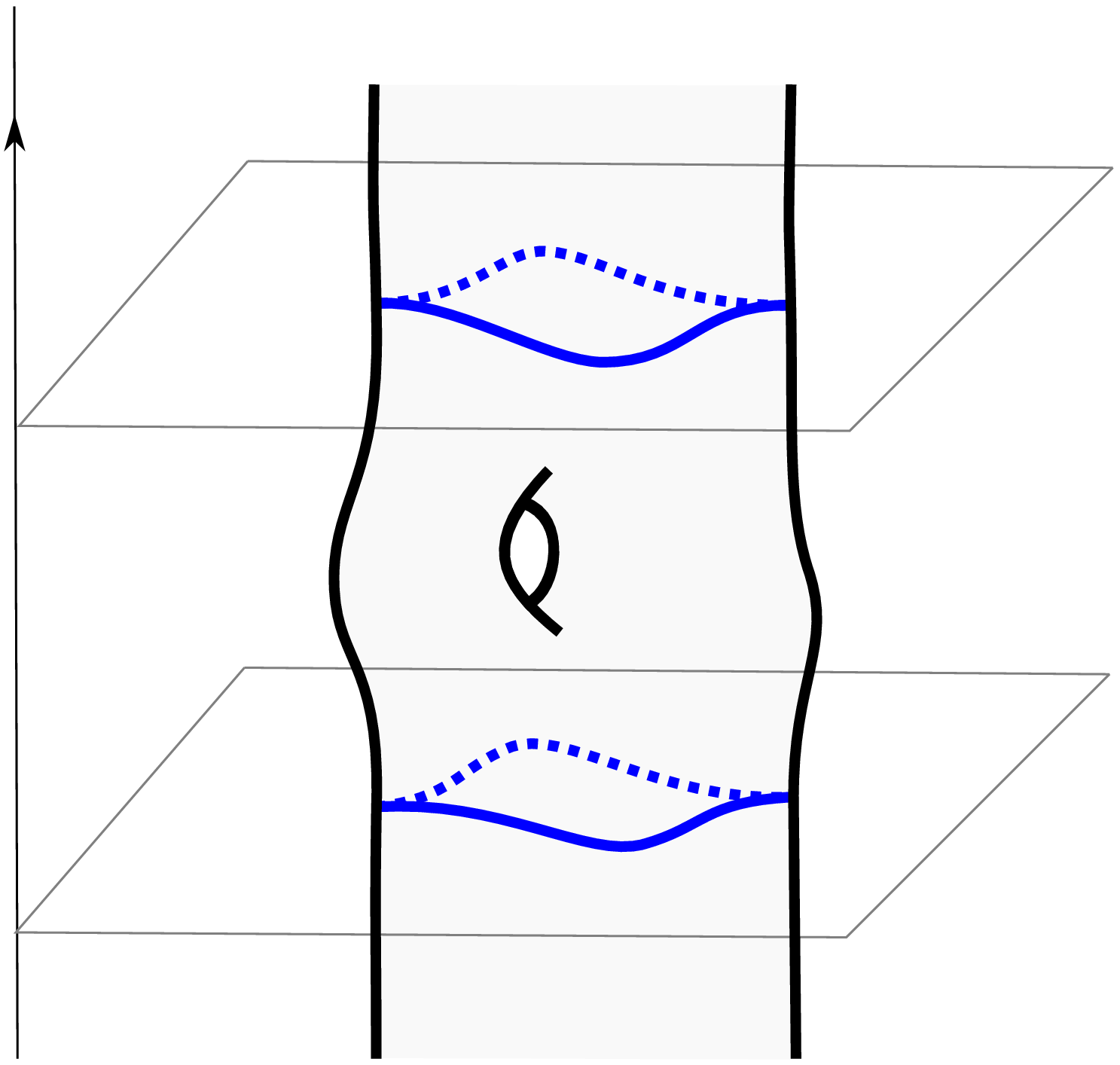}
\caption{A Lagrangian cobordism $\Sigma$ from $\Lambda_-$ to $\Lambda_+$ lies in the symplectization of $\bbR^3$, which is $\big(\bbR_t\times\bbR^3, d(e^t \alpha)\big)$.
The vertical direction is the $t$ direction and each horizontal plane is $\bbR^3$.
Two Legendrian submanifolds $\Lambda_+$ and $\Lambda_-$ sit inside different $\bbR^3$ with $t$ coordinates $N$ and $-N$, respectively.}
\label{lagcob}
\end{figure}

Lagrangian cobordism is a natural relation between Legendrian submanifolds 
and is crucial in the definition of the functorial property of the Legendrian contact homology differential graded algebra (DGA).
For a Legendrian knot $\Lambda$ in $(\bbR^3, \xi= \ker \alpha)$,
the Legendrian contact homology DGA
is a powerful invariant of $\Lambda$ that was 
introduced by Eliashberg \cite{Eli} and  Chekanov \cite{Che}
 in the spirit of symplectic field theory \cite{EGH}.
The underlying algebra  $\calA(\Lambda;\bbF[H_1(\Lambda)])$ is a unital graded algebra freely 
 generated by Reeb chords of $\Lambda$ and a basis of $H_1(\Lambda)$ over a field $\bbF$,
where $H_1(\Lambda)$ is  the singular homology of $\Lambda$ with $\bbZ$ coefficients.
The differential $\partial$ is defined by a count of rigid holomorphic disks in $\bbR \times \bbR^3$ with boundary on  the Lagrangian submanifold $\bbR\times \Lambda$.
The DGA $(\calA(\Lambda;\bbF[H_1(\Lambda)]), \partial)$ is invariant 
up to stable tame isomorphism
under Legendrian isotopy of $\Lambda$.
Ekholm, Honda and K{\'a}lm{\'a}n \cite{EHK} showed that
an exact Lagrangian cobordism  $\Sigma$ from $\Lambda_-$ to $\Lambda_+$ gives a DGA map
 $$\phi_{\Sigma}: (\calA(\Lambda_+;\bbF[H_1(\Sigma)]), \partial) \to (\calA(\Lambda_-; \bbF[H_1(\Sigma)]), \partial),$$
which is defined by a count of rigid holomorphic disks in $\bbR \times \bbR^3$ as well, but with boundary on 
$\Sigma$.
Here $\bbF$ can be any field if the cobordism $\Sigma$ is spin.
If the condition is not satisfied, the field $\bbF$ is assumed to be $\bbZ_2$.

\begin{rmk}

When $\Sigma$ is spin,
the boundary Legendrian knots $\Lambda_{+}$ and $\Lambda_-$ get induced spin structure from the spin structure of $\Sigma$.
This condition
makes the moduli spaces of the holomorphic disks used in the DGA differentials and the DGA map equipped with a coherent orientation (following \cite{EESorientation}). 
In particular, when the dimension of a moduli space is $0$,
one can associate each rigid holomorphic disk in the moduli space with a sign.
Therefore, we can count the disks with sign and  get coefficients in any field $\bbF$.
Otherwise, it is only reasonable to count the disks mod $2$, which means ignoring the orientation. 
For the rest of the paper, we  focus on the case where $\Sigma$ is spin.
If one is working on a non-spin cobordism,
one can omit our description of orientation and get the corresponding statements for $\bbF=\bbZ_2$.
\end{rmk}

A fundamental question about Lagrangian cobordisms is:  given two Legendrian knots $\Lambda_+$ and $\Lambda_-$, does there exist an exact Lagrangian cobordism $\Sigma$ between them?
In order to answer this question, we need to investigate the properties of Lagrangian cobordisms and 
obtain a relationship between Legendrian knots $\Lambda_+$ and $\Lambda_-$.
If the two given Legendrian knots do not satisfy the desired relationship, there does not exist a cobordism between them.
In this way, we can find obstructions to the existence of exact Lagrangian cobordisms.
Chantraine  \cite{Chantraine}  first gave a relationship between  Thurston-Bennequin numbers of the Legendrian knots
\begin{equation}\label{tb}
tb(\Lambda_+)-tb(\Lambda_-) = -\chi(\Sigma).
\end{equation}
This question was explored further in  a lot of work including \cite{BST}, \cite{ST}, \cite{BS}, \cite{CNS} and \cite{CDGG} using generating families, normal rulings and Floer theory.

We approach the question through studying the relationship between the augmentation category of the Legendrian knots that are connected by an exact Lagrangian cobordism.
Analogous to the derived Fukaya category of exact Lagrangian compact submanifolds 
introduced in \cite{NZ},
the augmentation category is an $A_{\infty}$ category of Legendrian knots in $(\bbR^3, \ker \alpha)$.
Bourgeois and Chantraine first introduced  a non-unital  $A_{\infty}$ category in \cite{BC}
and then 
 Ng, Rutherford, Sivek, Shende and Zaslow  introduced a unital version in \cite{NRSSZ}.
We will focus on the latter one.

For a fixed DGA $(\calA(\Lambda),\partial)$ of a Legendrian knot $\Lambda$, the augmentation category $\calA ug_+(\Lambda)$  
consists of objects, morphisms and $A_{\infty}$ operations.
The objects in the category are augmentations $\epsilon$ of the Legendrian contact homology DGA, i.e. DGA maps $\epsilon: (\calA (\Lambda) ,\partial) \to (\bbF, 0)$.
For any two objects $\epsilon_1$ and $\epsilon_2$, 
the morphism space  $Hom_+(\epsilon_1, \epsilon_2)$ is a vector space over the field $\bbF$
generated by Reeb chords
from $\Lambda$ to $\Lambda'$, where $\Lambda'$ is  a positive Morse perturbation of $\Lambda$.
The $A_{\infty}$ operations are composition maps $\{m_n |\  n\ge 1\}$ that satisfy certain relations. 
These relations allow us to take cohomology of the $Hom_+(\epsilon_1, \epsilon_2)$ space with respect to $m_1$, 
denoted by $H^*Hom_+(\epsilon_1, \epsilon_2)$.
From \cite{NRSSZ}, we know that
up to $A_{\infty}$ equivalence, the augmentation category $\calA ug_+(\Lambda)$  is an invariant of Legendrian knots under Legendrian isotopy.

We will show that an exact Lagrangian cobordism $\Sigma$ from a Legendrian knot $\Lambda_-$ to a Legendrian knot $\Lambda_+$ gives 
a DGA map $\phi_{\Sigma}$ from the DGA $\calA(\Lambda_+; \bbF[H_1(\Lambda_+)])$
to the DGA  $\calA(\Lambda_-; \bbF[H_1(\Lambda_-)])$.
By \cite{NRSSZ}, this DGA map induces
an $A_{\infty}$
category map $ f: \calA ug_+(\Lambda_-) \to \calA ug_+(\Lambda_+)$.
As a result, the augmentation category $\calA ug_+$ acts functorially under Lagrangian cobordisms as well.
For each augmentation $\epsilon_-$ of $\calA(\Lambda_-)$, the cobordism $\Sigma$ induces an augmentation $\epsilon_+$ of $\calA(\Lambda_+)$ by  composing with the DGA map $\phi_{\Sigma}$, i.e.,
$$\epsilon_+= \epsilon_- \circ \phi_{\Sigma}.$$
The augmentation category map $f$ sends an object $\epsilon_-$ of $\calA ug_+(\Lambda_-)$ to
the object $\epsilon_+$ of $\calA ug_+(\Lambda_+)$.
For any two objects $\epsilon_-^1$ and $\epsilon^2_-$ in $\calA ug_+(\Lambda_-)$,
the category map $f$ sends the morphism $Hom_+(\epsilon^1_-, \epsilon^2_-)$ to the morphism $Hom_+(\epsilon^1_+, \epsilon^2_+)$, where $\epsilon^1_+$ and $\epsilon^2_+$ are the augmentations induced by $\Sigma$.

We investigate properties of this $A_{\infty}$ category map
through the Floer theory of a pair of exact Lagrangian cobordisms \cite{CDGG},
which is an analog of Ekholm's construction for a pair of  Lagrangian fillings \cite{Ekh} in the spirit of 
symplectic field theory \cite{EGH}.
Let  $\Sigma$ be an exact Lagrangian cobordism from $\Lambda_-$ to $\Lambda_+$. 
Perturb $\Sigma$ using a positive Morse function $F$ and get a new exact Lagrangian cobordism $\Sigma'$.
In \cite{CDGG}, Chantraine, Dimitroglou Rizell, Ghiggini and Golovko constructed a chain complex for this pair of exact Lagrangian cobordisms $\Sigma \cup \Sigma'$, called the Cthulhu chain complex.
The generators of this chain complex are the union of  double points of $\Sigma\cup \Sigma'$ and
Reeb chords on the cylindrical ends from $\Sigma$ to $\Sigma'$.
Indeed,
the second part agrees with union of $Hom_+$ spaces  in the augmentation category of the Legendrian submanifolds on two ends.
The differential of this chain complex is defined by a count of rigid holomorphic disks with boundary on $\Sigma \cup \Sigma'$ as well.
From \cite{CDGG}, the Cthulhu chain complex is acyclic,
which implies the following  long exact sequence:
\begin{thm}[see Corollary \ref{lescor}]\label{thm1}
Let $\Sigma$ be an exact Lagrangian cobordism with Maslov number $0$ from $\Lambda_-$ to $\Lambda_+$.
If
$\epsilon^i_-$, for $i=1,2$, is an augmentation of $\calA(\Lambda_-)$
and $\epsilon^i_+$ is the augmentation of $\calA(\Lambda_+)$ induced by $\Sigma$,
then we have the following long exact sequence:
$$
{\xymatrixcolsep{1pc}
\xymatrix{
\cdots  \arr& H^{k}(\Sigma, \Lambda_-)  \arr& H^k Hom_+(\epsilon^1_+,\epsilon^2_+)  \arr& H^{k}Hom_+(\epsilon^1_-,\epsilon^2_-)  \arr& H^{k+1}(\Sigma, \Lambda_-) \arr& \cdots }}.
$$
\end{thm}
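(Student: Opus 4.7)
The plan is to derive this long exact sequence as a formal homological-algebra consequence of the acyclicity of the Cthulhu chain complex of Chantraine-Dimitroglou Rizell-Ghiggini-Golovko \cite{CDGG}. I would apply their construction to the pair $(\Sigma,\Sigma')$, where $\Sigma'$ is a small exact Lagrangian perturbation of $\Sigma$ obtained from a positive Morse function $F\colon\Sigma\to\bbR$ with suitable boundary behavior on the two cylindrical ends. As a vector space the Cthulhu complex splits as $Cth = C_+\oplus C_0\oplus C_-$, where $C_{\pm}$ is generated by the mixed Reeb chords at the positive/negative cylindrical end and $C_0$ by the transverse double points of $\Sigma\cap\Sigma'$. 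The first step is to twist the Cthulhu differential by the pair of augmentations $(\epsilon^1_-, \epsilon^2_-)$, in parallel with the bar-construction twist that defines the $A_\infty$ operation $m_1$ on $Hom_+$; the key point to verify is that this twist preserves both $d^2=0$ and acyclicity from \cite{CDGG}, which should follow because the twist interacts with the DGA-side of the SFT gluing and compactness arguments in a compatible way.

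Next I would identify each block with the corresponding term in the desired sequence. By inspection of the generators and moduli spaces, the outer blocks $(C_{\pm}, d_{\pm\pm})$ are, up to a possible grading shift, the $m_1$-complexes computing $H^*Hom_+(\epsilon^1_{\pm}, \epsilon^2_{\pm})$ in the augmentation categories on the two ends, where $\epsilon^i_+ = \epsilon^i_-\circ\phi_\Sigma$. For the middle block I would invoke the result used in \cite{CDGG} that for a sufficiently small positive Morse perturbation the intersection-point subcomplex $(C_0, d_{00})$ is chain isomorphic to the Morse cochain complex of $F$; with the boundary behavior of $F$ chosen so that only $\Lambda_-$ contributes nontrivially, this Morse complex computes $H^*(\Sigma,\Lambda_-)$.

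Finally, the SFT action inequality forces the Cthulhu differential to be upper triangular with respect to an appropriate three-step filtration of $C_+\oplus C_0\oplus C_-$, producing two short exact sequences of complexes
\begin{equation*}
0 \to C_0\oplus C_- \to Cth \to C_+ \to 0,\qquad 0 \to C_- \to C_0\oplus C_-\to C_0 \to 0.
\end{equation*}
Because $H^*(Cth) = 0$, the long exact sequence of the first short exact sequence yields a degree-shift isomorphism between $H^*(C_+)$ and $H^{*+1}(C_0\oplus C_-)$, and splicing this into the long exact sequence associated to the second short exact sequence produces the desired six-term repeating pattern. The Maslov-$0$ hypothesis is needed to guarantee compatible $\bbZ$-gradings throughout so that the shifts make sense integrally. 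I expect the main technical obstacle to be reconciling the Cthulhu-side grading conventions on $C_{\pm}$ with the augmentation-category grading on $Hom_+$, so that the connecting homomorphism in the spliced long exact sequence really lands in $H^{k+1}(\Sigma, \Lambda_-)$ as displayed, rather than in some off-by-one shifted degree.
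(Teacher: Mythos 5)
Your proposal is correct and follows essentially the same route as the paper: perturb $\Sigma$ by a positive Morse function to form the pair of cobordisms, identify the three blocks of the augmentation-twisted Cthulhu complex with $Hom_+(\epsilon^1_+,\epsilon^2_+)$ (shifted by one), the Morse cochain complex computing $H^*(\Sigma,\Lambda_-)$, and $Hom_+(\epsilon^1_-,\epsilon^2_-)$, and then extract the long exact sequence formally from acyclicity plus upper-triangularity (the paper packages this as the statement that the mapping cone of $\Psi=d_{+-}+d_{0-}$ is acyclic, hence $H^k Hom_+(\epsilon^1_-,\epsilon^2_-)\cong H^k Cone(d_{+0})$, rather than by splicing two short exact sequences, but the two bookkeeping schemes are equivalent). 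One small correction: with the differential upper triangular in the order $(C_+,C_0,C_-)$, it is $C_+$ that is a subcomplex and $C_0\oplus C_-$ the quotient, so your short exact sequences should read $0\to C_+\to Cth\to C_0\oplus C_-\to 0$ and $0\to C_0\to C_0\oplus C_-\to C_-\to 0$; with the arrows so corrected, and with the degree shifts $Cth^k\supset C^{k-2}(\Lambda^1_+,\Lambda^2_+)=Hom_+^{k-1}(\epsilon^1_+,\epsilon^2_+)$ and $C^{k-1}(\Lambda^1_-,\Lambda^2_-)=Hom_+^{k}(\epsilon^1_-,\epsilon^2_-)$ that the paper establishes via the wrapping argument, the splice lands exactly on the displayed sequence.
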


If $\epsilon^1_- = \epsilon^2_-= \epsilon_-$, we can identify
$H^k Hom_+(\epsilon, \epsilon)$ with the linearized contact homology $LCH^{\epsilon}_{1-k}(\Lambda)$ by \cite[Section 5.2]{NRSSZ}.
The long exact sequence above can be rewritten as:

\diag{\cdots \arr & H^{k}(\Sigma, \Lambda_-) \arr & LCH^{\epsilon_+}_{1-k}(\Lambda_+) \arr& LCH^{\epsilon_-}_{1-k}(\Lambda_-)  \arr& H^{k+1}(\Sigma, \Lambda_-)  \arr& \cdots .}
Computing the Euler characteristics of the exact triangle, we have
$$
tb(\Lambda_+)-tb(\Lambda_-) = - \chi(\Sigma),
$$
where $\chi(\Sigma)$ is the Euler characteristic of the surface $\Sigma$.
This result was previously shown by Chantraine in \cite{Chantraine}.

Combine Theorem \ref{thm1} with the augmentation category map induced by exact Lagrangian cobordisms and we have the following theorem.
\begin{thm}[see Theorem \ref{main1}]\label{thm2}
Let $\Sigma$ be an  exact Lagrangian cobordism  with Maslov number $0$  from a Legendrian knot $\Lambda_-$ to a Legendrian knot $\Lambda_+$. 
For $i=1,2$,
if $\epsilon^i_-$ is an augmentation of  $\calA(\Lambda_-)$
and $\epsilon^i_+$ is the  augmentation of $\calA(\Lambda_+)$ induced by $\Sigma$,
the map $$i^0: H^0Hom_+(\epsilon^1_+, \epsilon^2_+) \to H^0Hom_+(\epsilon^1_-, \epsilon^2_-)$$
in the long exact sequence in Theorem \ref{thm1} is an isomorphism.
Moreover, we have that
$$
H^*Hom_+(\epsilon^1_+,\epsilon^2_+) \cong H^*Hom_+(\epsilon^1_-,\epsilon^2_-) \oplus \bbF^{-\chi (\Sigma)}[1],
$$
where $\bbF^{-\chi (\Sigma)}[1]$ denotes the vector space $\bbF^{-\chi (\Sigma)}$ in degree $1$
and $\chi(\Sigma)$ is the Euler characteristic of the surface $\Sigma$.
\end{thm}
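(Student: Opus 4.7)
The plan is to combine the long exact sequence from Theorem \ref{thm1} with a direct computation of the relative cohomology $H^*(\Sigma,\Lambda_-)$, reducing the statement to the vanishing of a single connecting homomorphism.

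First I would compute $H^*(\Sigma,\Lambda_-)$, using that $\Sigma$ is a $2$-dimensional orientable surface whose boundary contains $\Lambda_-$. Lefschetz duality gives $H^0(\Sigma,\Lambda_-) \cong H_2(\Sigma,\Lambda_+) = 0$ and $H^2(\Sigma,\Lambda_-) \cong H_0(\Sigma,\Lambda_+) = 0$ (handling separately any component of $\Sigma$ that fails to meet both $\Lambda_-$ and $\Lambda_+$), while $\chi(\Lambda_-) = 0$ forces $H^1(\Sigma,\Lambda_-) \cong \bbF^{-\chi(\Sigma)}$. Substituting into the long exact sequence of Theorem \ref{thm1} collapses it to
$$
0 \to H^0Hom_+(\epsilon^1_+,\epsilon^2_+) \xrightarrow{i^0} H^0Hom_+(\epsilon^1_-,\epsilon^2_-) \xrightarrow{\delta} \bbF^{-\chi(\Sigma)} \to H^1Hom_+(\epsilon^1_+,\epsilon^2_+) \to H^1Hom_+(\epsilon^1_-,\epsilon^2_-) \to 0,
$$
together with isomorphisms $H^k Hom_+(\epsilon^1_+,\epsilon^2_+) \cong H^k Hom_+(\epsilon^1_-,\epsilon^2_-)$ in every degree $k \ge 2$.

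The theorem is then reduced to proving $\delta = 0$. Granting this, the vanishing of $H^0(\Sigma,\Lambda_-)$ already makes $i^0$ injective, and $\delta = 0$ upgrades it to an isomorphism; the residual four-term short exact sequence of $\bbF$-vector spaces automatically splits, yielding the decomposition $H^*Hom_+(\epsilon^1_+,\epsilon^2_+) \cong H^*Hom_+(\epsilon^1_-,\epsilon^2_-) \oplus \bbF^{-\chi(\Sigma)}[1]$.

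The main obstacle is showing $\delta = 0$. I would analyse $\delta$ at the chain level in the Cthulhu complex, where it is computed by counting rigid pseudo-holomorphic disks with boundary on $\Sigma \cup \Sigma'$ whose asymptotics mix a Reeb chord at the negative end with a critical point of the Morse perturbation $F$ on $\Sigma$. Two natural strategies present themselves: either a moduli-space dimension count exploiting the $2$-dimensional nature of $\Sigma$, or the explicit construction of a chain-level right inverse to $i^0$ built from the $A_\infty$ functor $f:\calA ug_+(\Lambda_-)\to\calA ug_+(\Lambda_+)$ together with its unital behaviour on identity components, since any such section forces $\delta$ to vanish. I anticipate that the bulk of the technical work sits here, patterned on the moduli-space arguments of \cite{EHK} and \cite{CDGG}.
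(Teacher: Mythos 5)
Your reduction is correct as far as it goes: the computation of $H^*(\Sigma,\Lambda_-)$ (using connectivity of $\Sigma$, justified in the paper by Remark \ref{connect}), the resulting five-term exact sequence, the automatic injectivity of $i^0$, and the observation that everything else follows once the connecting map $\delta$ vanishes all match the paper's setup. But the vanishing of $\delta$ --- equivalently, the surjectivity of $i^0$ --- is the entire content of the theorem, and your proposal does not prove it; it only names two candidate strategies and defers ``the bulk of the technical work.'' Neither strategy is carried out, and neither is obviously viable as stated. The moduli-space dimension count is not developed at all, and the ``right inverse'' idea presupposes that the cohomology map $f^*$ induced by the cobordism is a section of $i^0$; the paper never establishes such an identity, and the relation it does prove between $i^k$ and the cobordism map (Theorem \ref{thm 1}: $i^k$ is surjective if and only if $d_{+-}^k$ is injective) is strictly weaker than the existence of a section, is proved only later, and is not used in the proof of Theorem \ref{main1}.

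The paper's actual argument for surjectivity of $i^0$ is a dimension comparison of a different flavor, none of whose ingredients appear in your proposal. One runs the long exact sequence with $\epsilon^1$ and $\epsilon^2$ interchanged to obtain $H^2Hom_+(\epsilon^2_+,\epsilon^1_+)\cong H^2Hom_+(\epsilon^2_-,\epsilon^1_-)$; Sabloff duality (\ref{sabdual}) converts this into $\dim H^0Hom_-(\epsilon^1_+,\epsilon^2_+)=\dim H^0Hom_-(\epsilon^1_-,\epsilon^2_-)$; the long exact sequence (\ref{lesknot}) relating $Hom_-$, $Hom_+$ and $H^*(\Lambda_\pm)$ then reduces the desired inequality $\dim H^0Hom_+(\epsilon^1_+,\epsilon^2_+)\ge \dim H^0Hom_+(\epsilon^1_-,\epsilon^2_-)$ to $\dim\ker\delta_+\ge\dim\ker\delta_-$ for the connecting maps $\delta_\pm\colon H^0(\Lambda_\pm)\to H^1Hom_-(\epsilon^1_\pm,\epsilon^2_\pm)$; and finally the functoriality of $f_1$, together with the fact that $f_1$ sends $y_-^{\vee}$ to $y_+^{\vee}$ (so $f^*$ is injective on $H^0(\Lambda_-)$ and carries $\ker\delta_-$ into $\ker\delta_+$), supplies that last inequality. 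Combined with the injectivity of $i^0$ coming from $H^0(\Sigma,\Lambda_-)=0$, the inequality forces $i^0$ to be an isomorphism. This chain of duality and comparison arguments is the missing piece of your proof.
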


This relation was shown for positive braid closures in \cite{Menke}.
Theorem \ref{thm2} shows that this is true for general Legendrian knots.

When $\epsilon^1_-=\epsilon^2_-$, we 
restate Theorem \ref{thm2} in terms of linearized contact homology as follows:
\begin{cor}[see Corollary \ref{LCH}]
Let $\Sigma$ be an exact Lagrangian cobordism with Maslov number $0$ from $\Lambda_-$ to $\Lambda_+$.
If $\epsilon_-$  is an augmentation of  $\calA(\Lambda_-)$ and $\epsilon_+$ is the augmentation of $\calA(\Lambda_+)$ induced by $\Sigma$,
then
$$LCH_*^{\epsilon_+}(\Lambda_+)\cong LCH_*^{\epsilon_-}(\Lambda_-) \oplus \bbF^{-\chi (\Sigma)}[0],$$
where $\bbF^{-\chi (\Sigma)}[0]$ denotes the vector space $\bbF^{-\chi (\Sigma)}$ in degree $0$.
\end{cor}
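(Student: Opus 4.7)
The plan is to deduce the corollary as a direct specialization of Theorem \ref{thm2}, combined with the standard identification, recalled just above the statement, between the self-Hom cohomology in $\calA ug_+$ and linearized contact homology. No new geometric input is needed; the work is entirely bookkeeping of the degree conventions.

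First I would set $\epsilon^1_- = \epsilon^2_- = \epsilon_-$, so that (since $\epsilon_\pm$ is defined by composition with $\phi_\Sigma$) we also have $\epsilon^1_+ = \epsilon^2_+ = \epsilon_+$. Applying Theorem \ref{thm2} in this situation yields
$$
H^k Hom_+(\epsilon_+,\epsilon_+) \;\cong\; H^k Hom_+(\epsilon_-,\epsilon_-) \,\oplus\, \bigl(\bbF^{-\chi(\Sigma)}[1]\bigr)^k,
$$
where the last summand is $\bbF^{-\chi(\Sigma)}$ when $k=1$ and vanishes otherwise.

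Next I would invoke \cite[Section 5.2]{NRSSZ}, cited in the paragraph preceding Theorem \ref{thm2}, which provides the isomorphism
$$
H^k Hom_+(\epsilon,\epsilon) \;\cong\; LCH^{\epsilon}_{1-k}(\Lambda)
$$
for any augmentation $\epsilon$ of a Legendrian knot $\Lambda$. Substituting this identification on both sides of the decomposition above, and reindexing via $j = 1-k$, converts the relation into
$$
LCH^{\epsilon_+}_j(\Lambda_+) \;\cong\; LCH^{\epsilon_-}_j(\Lambda_-) \,\oplus\, \bigl(\bbF^{-\chi(\Sigma)}[0]\bigr)^j,
$$
where the extra summand now lives in degree $0$ (corresponding to $k=1$). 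This is precisely the claimed formula.

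There is no hard step; the only thing to get right is the degree shift. The cohomological grading on $Hom_+$ and the homological grading on $LCH$ differ by the shift $k \mapsto 1-k$, which converts the degree-$1$ shifted summand $\bbF^{-\chi(\Sigma)}[1]$ of Theorem \ref{thm2} into a degree-$0$ summand $\bbF^{-\chi(\Sigma)}[0]$ of linearized contact homology. I would include one sentence verifying this conversion to justify the position of the extra factor.
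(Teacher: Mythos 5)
Your proposal is correct and matches the paper's own derivation: the paper likewise specializes Theorem \ref{main1} to $\epsilon^1_-=\epsilon^2_-=\epsilon_-$ and applies the identification $H^kHom_+(\epsilon,\epsilon)\cong LCH^{\epsilon}_{1-k}(\Lambda)$ from \cite[Section 5.2]{NRSSZ}, so the degree-$1$ summand becomes a degree-$0$ summand exactly as you describe. No further comment is needed.
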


Therefore, if there exists an exact Lagrangian cobordism $\Sigma$ from $\Lambda_-$ to $\Lambda_+$,
the Poincar{\'e} polynomials of linearized contact homology  of $\Lambda_+$ and  $\Lambda_-$ agree on all degrees except $0$.
In degree $0$ their coefficients  differ by $-\chi(\Sigma)$.
This is a stronger obstruction to the existence of the exact Lagrangian cobordism than the relation (\ref{tb}) of 
the Thurston-Bennequin number.
\begin{figure}[!ht]
\begin{minipage}{2in}
\labellist
\pinlabel $\Lambda_1$ at 80 -5
\endlabellist
\includegraphics[width=2in]{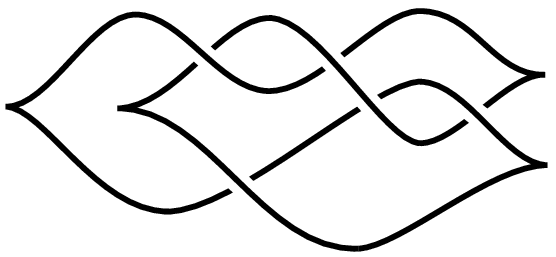}
\end{minipage}
\hspace{0.2in}
\begin{minipage}{2in}
\labellist
\pinlabel $\Lambda_2$ at 240 -30
\endlabellist
\includegraphics[width=2in]{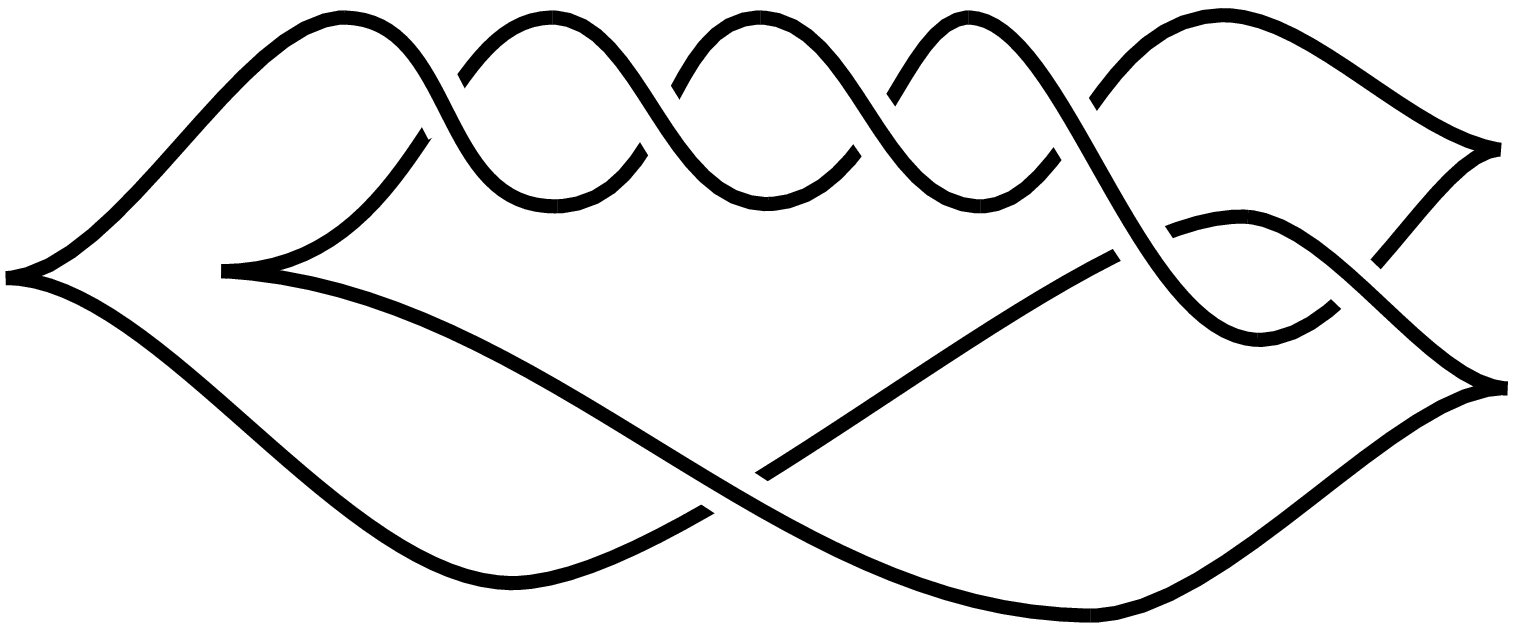}

\end{minipage}
\vspace{0.1in}
\caption{Legendrian knots of knot type $4_1$ and $6_1$.}
\label{4_1vs6_1}
\end{figure}

For instance, 
Figure \ref{4_1vs6_1} shows two Legendrian knots  $\Lambda_1$ and $\Lambda_2$ of smooth knot type $4_1$ and $6_1$, respectively.
There is a topological cobordism between $4_1$ and $6_1$ with genus $1$.
The Thurston-Bennequin numbers of $\Lambda_1$ and $\Lambda_2$ are $-3$ and $-5$, respectively,
and thus satisfy the Thurston-Bennequin number relation given in (\ref{tb}).
Therefore,
 there is a possibility to exist an exact Lagrangian cobordism from $\Lambda_2$ to $\Lambda_1$ with genus $1$.
However, the Poincar{\'e} polynomials of linearized contact homology for $\Lambda_1$ and $\Lambda_2$ are $t^{-1}+2t$ and $2t^{-1} +3t$, respectively.
Thus, we have the following proposition.
\begin{prop}[see Proposition \ref{ex}]
There does not exist  an exact Lagrangian cobordism with Maslov number $0$  from $\Lambda_2$ to $\Lambda_1$, where $\Lambda_1$ and $\Lambda_2$ are shown in Figure \ref{4_1vs6_1}.
\end{prop}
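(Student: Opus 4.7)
The plan is to derive a contradiction from Corollary \ref{LCH} by comparing the Poincaré polynomials of linearized contact homology at the two ends. Assume for contradiction that an exact Lagrangian cobordism $\Sigma$ with Maslov number $0$ exists from $\Lambda_-=\Lambda_2$ to $\Lambda_+=\Lambda_1$. First I compute $\chi(\Sigma)$ via Chantraine's relation (\ref{tb}): since $tb(\Lambda_1)=-3$ and $tb(\Lambda_2)=-5$,
$$\chi(\Sigma) \;=\; -(tb(\Lambda_+)-tb(\Lambda_-)) \;=\; -2.$$

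Next I fix an augmentation $\epsilon_-$ of $\calA(\Lambda_2)$; its existence is witnessed by the nonzero linearized contact homology polynomial of $\Lambda_2$. Setting $\epsilon_+:=\epsilon_-\circ\phi_\Sigma$, Corollary \ref{LCH} forces
$$LCH^{\epsilon_+}_*(\Lambda_1)\;\cong\; LCH^{\epsilon_-}_*(\Lambda_2)\oplus \bbF^{2}[0],$$
so the Poincaré polynomials must agree in every degree $k\neq 0$ and differ by $2$ in degree $0$.

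Substituting the stated polynomials $t^{-1}+2t$ for $\Lambda_1$ and $2t^{-1}+3t$ for $\Lambda_2$, the coefficients at $t^{-1}$ are $1$ versus $2$, and at $t$ are $2$ versus $3$. Since these disagreements occur in nonzero degrees, they contradict the identity forced by the corollary, and the assumed cobordism cannot exist.

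The main obstacle is justifying the two stated Poincaré polynomials for the specific Legendrian diagrams in Figure \ref{4_1vs6_1}. Concretely, one writes down the Legendrian contact homology DGAs, enumerates augmentations up to DGA homotopy, and verifies that \emph{every} augmentation of $\Lambda_1$ has Poincaré polynomial $t^{-1}+2t$ and every augmentation of $\Lambda_2$ has Poincaré polynomial $2t^{-1}+3t$, so that the displayed isomorphism cannot be satisfied for any choice of $\epsilon_-$. For these low-crossing Legendrians this reduces to a finite and tractable computation from the DGA generators and differentials.
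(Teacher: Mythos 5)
Your proposal is correct and follows essentially the same route as the paper: the paper's proof simply quotes the Poincar\'e polynomials $t^{-1}+2t$ and $2t^{-1}+3t$ (which can be read off from the atlas of \cite{CNatlas}) and observes that Corollary \ref{LCH} is violated in the nonzero degrees. Your additional computation of $\chi(\Sigma)$ via (\ref{tb}) is harmless but not needed, since the obstruction already appears in degrees $\pm 1$ where the corollary demands exact agreement; your remark that one must check the polynomials for \emph{every} augmentation is a fair point of care that the paper handles implicitly by appeal to the atlas.
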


Various long exact sequences that are similar to Theorem \ref{thm1} were explored by people.
Sabloff and Traynor \cite{ST} gave a long exact sequence using generating families.
Chantraine, Dimitroglou Rizell, Ghiggini and Golovko  gave three long exact sequences in \cite{CDGG} in the same spirit as this paper but use different Morse functions to perturb the cobordism.
The way we construct the pair of cobordisms allows us to have more control of the behavior of the Morse function.
This turns out to be a key point towards proving the following surprising theorems.

\begin{thm}[see Theorem \ref{injectivity} and \ref{cocat}]\label{thm5}
Let  $\Sigma$ be an exact Lagrangian cobordism  with Maslov number $0$ from a Legendrian knot $\Lambda_-$ to a Legendrian  knot $\Lambda_+$.
Then the $A_{\infty}$ category map $f: \calA ug_+(\Lambda_-) \to \calA ug_+(\Lambda_+)$ induced by the exact Lagrangian cobordism $\Sigma$  is injective on the level of equivalence classes of objects. 
In addition, the corresponding cohomology category map $ \tilde{f}: H^* \calA ug_+(\Lambda_-) \to H^* \calA ug_+(\Lambda_+)$ is faithful.
In particular, when $\chi(\Sigma)=0$, the functor $\tilde{f}$ is fully faithful.
\end{thm}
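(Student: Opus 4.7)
The plan is to combine Theorem \ref{thm2} with a chain-level identification of the cohomology functor $\tilde f$ as a one-sided inverse of the connecting map $i$ from the Cthulhu long exact sequence. Theorem \ref{thm2} already supplies, for every pair $(\epsilon_-^1,\epsilon_-^2)$, maps $i^k:H^kHom_+(\epsilon_+^1,\epsilon_+^2)\to H^kHom_+(\epsilon_-^1,\epsilon_-^2)$ for which $i^0$ is an isomorphism and the global splitting $H^*Hom_+(\epsilon_+^1,\epsilon_+^2)\cong H^*Hom_+(\epsilon_-^1,\epsilon_-^2)\oplus\bbF^{-\chi(\Sigma)}[1]$ holds. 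The additional task is to match the category-induced map $\tilde f^k$, which goes in the direction opposite to $i^k$, with a section of $i^k$.

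First I would write down an explicit chain-level description of $f$ on morphism spaces using the specific perturbation $\Sigma'$ of $\Sigma$ by a positive Morse function $F$ that the paper constructs. Organizing the Cthulhu chain complex for $\Sigma\cup\Sigma'$ into its three blocks $C_+$, $CF(\Sigma,\Sigma')$, $C_-$, one defines a candidate splitting $s:C_-\to C_+$ by counting mixed disks on $\Sigma\cup\Sigma'$. An SFT-type neck-stretching argument, exploiting the tight control on $F$, should show that the mixed moduli spaces degenerate in a one-to-one fashion to the single-level disks on $\Sigma$ defining $\phi_\Sigma$, so that on cohomology $s$ agrees with $\tilde f$. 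The Cthulhu differential then yields $i\circ s\sim\mathrm{id}$, giving $i^k\circ\tilde f^k=\mathrm{id}$ on cohomology.

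Once $\tilde f$ is a right inverse of $i$ it is automatically injective on Hom-sets, proving faithfulness. If in addition $\chi(\Sigma)=0$, the extra summand in Theorem \ref{thm2} vanishes and $i^k$ is itself an isomorphism in every degree, so $\tilde f^k$ is a two-sided inverse and $\tilde f$ is fully faithful. For injectivity on equivalence classes of objects, suppose $\epsilon_+^1=f(\epsilon_-^1)$ and $\epsilon_+^2=f(\epsilon_-^2)$ are equivalent in the cohomology category, witnessed by classes $[\alpha],[\beta]$ of degree $0$ whose compositions are the identity classes. Since $\tilde f^0$ is an isomorphism on every relevant degree-zero Hom, pull these back to $[\alpha_-]=(\tilde f^0)^{-1}[\alpha]$ and $[\beta_-]=(\tilde f^0)^{-1}[\beta]$; functoriality plus unitality plus injectivity of $\tilde f^0$ on $H^0Hom_+(\epsilon_-^i,\epsilon_-^i)$ then force both compositions $[\alpha_-]\cdot[\beta_-]$ and $[\beta_-]\cdot[\alpha_-]$ to equal the respective identity classes, producing an equivalence between $\epsilon_-^1$ and $\epsilon_-^2$.

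The main obstacle is the identification, at the level of cohomology, of the category map $\tilde f$ with the splitting $s$ coming from the Cthulhu complex. It amounts to comparing two a priori different counts of holomorphic curves: the disks on $\Sigma$ defining $\phi_\Sigma$ and the mixed disks on $\Sigma\cup\Sigma'$ defining $s$. Making this comparison rigorous is exactly where the particular Morse perturbation emphasized in the paper is essential, since it is what controls the neck-stretching degenerations of the mixed moduli spaces down to the DGA-map disks on a single copy of $\Sigma$.
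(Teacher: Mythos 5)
Your overall architecture is the same as the paper's: identify the category map $f_1$ on morphisms with the component $d_{+-}$ of the Cthulhu differential by matching the two disk counts (this is the paper's Theorem \ref{equal}, proved via a correspondence between rigid disks on $\Sigma\cup\Sigma'$ and rigid disks on $\Sigma$ decorated with negative gradient flow lines, in the style of Ekholm--Etnyre--Sabloff, rather than by a bare neck-stretching argument, but the intent is the same), and your third paragraph on injectivity of equivalence classes of objects is essentially verbatim the paper's proof of Theorem \ref{injectivity}. The problem is the step in between.

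The claim that the Cthulhu differential yields $i\circ s\sim\mathrm{id}$, hence $i^k\circ\tilde f^k=\mathrm{id}$ on cohomology, is not justified and is not what acyclicity gives you. Unwinding the definitions: $i^k=(\Phi^k)^{-1}\circ i_k$, where $\Phi=d_{+-}+d_{0-}$ is the quasi-isomorphism from $Hom_+(\epsilon^1_-,\epsilon^2_-)$ to $Cone(d_{+0})$ and $i_k$ is induced by the inclusion of $Hom_+(\epsilon^1_+,\epsilon^2_+)$ into the cone. For a cycle $x$ one computes $i^k(d_{+-}^k[x])=[x]-(\Phi^k)^{-1}\bigl[(0,d_{0-}(x))\bigr]$, and the correction class $[(0,d_{0-}(x))]\in H^kCone(d_{+0})$ has no reason to vanish; there is no chain homotopy produced by the Cthulhu differential that kills it. What acyclicity actually yields --- and what the paper proves as Theorem \ref{thm 1} via the dimension count $\dim H^kCone(d_{+0})=\dim\mathrm{im}(d^k_{+-})+\dim\mathrm{im}(d^k_{0-})$ --- is only the equivalence ``$i^k$ is surjective if and only if $d^k_{+-}$ is injective'' (and dually for injective/surjective). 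One must then feed in, as separate input, that $i^k$ is surjective for every $k$: for $k\neq 0$ this comes from $H^{k+1}(\Sigma,\Lambda_-)=0$ in the long exact sequence, and for $k=0$ it is exactly the isomorphism statement of Theorem \ref{main1}, whose proof requires the Sabloff duality argument comparing $\ker\delta_+$ and $\ker\delta_-$. Since both your faithfulness claim and your assertion that $\tilde f^0$ is an isomorphism (needed for the object-level argument) rest on $i\circ s\sim\mathrm{id}$, the gap propagates to the whole proof; replacing that identity with the injective/surjective duality of Theorem \ref{thm 1} plus the surjectivity of $i^k$ closes it.
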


By \cite{NRSSZ}, for a Legendrian knot with a single base point, two augmentations  are equivalent if  and only if they are isomorphic as DGA maps.
This theorem  tells us that the number of augmentations of $\Lambda_-$ is smaller than or equal to  the 
the number  of augmentations of $\Lambda_+$ up to  equivalence.
However, in general, it is hard to count the number of augmentations of $\Lambda$ up to equivalence.
Ng, Rutherford, Shende and Sivek \cite{NRSS} introduced a new way to count the augmentations, called the homotopy cardinality,
which is related to the ruling polynomial.
Recall that the ruling polynomial is defined by $R_{\Lambda}(z)=\displaystyle{\sum_{R} z^{-\chi(R)}},$ 
where the sum is over all normal rulings $R$ of $\Lambda$ (see  \cite{Cheruiling} for the detailed definition).
This invariant is much easier to compute than the augmentation equivalence class. 
Using Theorem \ref{thm5}, we have the following corollary.

\begin{cor}[see Corollary \ref{ruling}]
Suppose there exists a spin exact Lagrangian cobordism with Maslov number $0$ from a Legendrian knot $\Lambda_-$ to  a Legendrian knot $\Lambda_+$. 
Then the ruling polynomials satisfy:
$$R_{\Lambda_-}(q^{1/2} - q^{-1/2}) \leq q^{-\chi(\Sigma)/2}R_{\Lambda_+}(q^{1/2} - q^{-1/2})$$
for any $q$ that is a power of a prime number.
\end{cor}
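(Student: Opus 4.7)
The plan is to compare the augmentation numbers of $\Lambda_-$ and $\Lambda_+$ over $\bbF_q$ using the homotopy cardinality formula of Ng, Rutherford, Shende and Sivek \cite{NRSS}. Their augmentation number $\text{Aug}(\Lambda, \bbF_q)$ is a positively-weighted sum over $[\epsilon] \in \pi_0 \calA ug_+(\Lambda, \bbF_q)$ whose weights are certain alternating products of $|H^k \text{Hom}_+(\epsilon, \epsilon)|$, and it satisfies a formula of the shape
$$\text{Aug}(\Lambda, \bbF_q) = q^{(tb(\Lambda)-1)/2}\, R_\Lambda(q^{1/2} - q^{-1/2}).$$
Once one establishes the inequality $\text{Aug}(\Lambda_-, \bbF_q) \leq \text{Aug}(\Lambda_+, \bbF_q)$, dividing through and applying the Chantraine relation $tb(\Lambda_+)-tb(\Lambda_-) = -\chi(\Sigma)$ of (\ref{tb}) produces the claimed ruling polynomial inequality.

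To establish this inequality, I would first invoke Theorem \ref{thm5} to realize $\pi_0\calA ug_+(\Lambda_-,\bbF_q)$ as a subset of $\pi_0 \calA ug_+(\Lambda_+, \bbF_q)$ through the injective map $f_*$. Since all summands defining $\text{Aug}$ are positive, restricting the $\Lambda_+$ sum to the image of $f_*$ already gives a lower bound for $\text{Aug}(\Lambda_+, \bbF_q)$. It thus suffices to compare termwise the weight of $[\epsilon_-]$ with the weight of $[\epsilon_+] = [\epsilon_- \circ \phi_\Sigma]$. Applying the diagonal case of Theorem \ref{thm2} (taking $\epsilon^1 = \epsilon^2 = \epsilon$) yields
$$H^* \text{Hom}_+(\epsilon_+, \epsilon_+) \cong H^* \text{Hom}_+(\epsilon_-, \epsilon_-) \oplus \bbF^{-\chi(\Sigma)}[1].$$
The extra summand lives in a single (odd) degree, and $-\chi(\Sigma) \geq 0$ for a cobordism between connected knots, so it multiplies the weight of $[\epsilon_+]$ by a factor of $q^{-\chi(\Sigma)} \geq 1$ relative to the weight of $[\epsilon_-]$; this gives the desired termwise inequality, and summing yields $\text{Aug}(\Lambda_-, \bbF_q) \leq \text{Aug}(\Lambda_+, \bbF_q)$.

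The main obstacle I anticipate is the bookkeeping of degree shifts, signs and normalization constants in \cite{NRSS}'s formula. In particular, one must verify that the extra rank $-\chi(\Sigma)$ concentrated in degree $1$ enters the alternating product with the sign that \emph{enhances} rather than diminishes the contribution of $[\epsilon_+]$, and that the residual prefactors of $q$ combine with the Chantraine relation to yield exactly $q^{-\chi(\Sigma)/2}$ (rather than some other half-integer power). Once these conventions are matched, the corollary follows immediately from Theorems \ref{thm5} and \ref{thm2}.
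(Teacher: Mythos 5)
Your route is essentially the paper's: use Theorem \ref{injectivity} to inject equivalence classes of augmentations of $\Lambda_-$ into those of $\Lambda_+$, compare the homotopy cardinalities $\pi_{\ge 0}\calA ug_+(\Lambda_\pm;\bbF_q)^*$ termwise over corresponding classes using Theorem \ref{main1}, and then translate via the NRSS formula and the Chantraine relation $tb(\Lambda_+)-tb(\Lambda_-)=-\chi(\Sigma)$ into the ruling polynomial inequality, with the $q^{-\chi(\Sigma)/2}$ coming entirely from the $q^{tb/2}$ prefactors.

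One detail in your termwise comparison is wrong, though it does not sink the argument. The weight of a class $[\epsilon]$ in the homotopy cardinality involves only $|Aut(\epsilon)|$ (governed by $H^0Hom_+(\epsilon,\epsilon)$) and the groups $H^{k}Hom_+(\epsilon,\epsilon)$ for $k\le -1$; degree $1$ never enters the alternating product. Since Theorem \ref{main1} gives $H^kHom_+(\epsilon_+,\epsilon_+)\cong H^kHom_+(\epsilon_-,\epsilon_-)$ for all $k\ne 1$, the weights of $[\epsilon_-]$ and $[\epsilon_+]$ are exactly \emph{equal} --- there is no enhancement factor $q^{-\chi(\Sigma)}$. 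The inequality $\pi_{\ge 0}\calA ug_+(\Lambda_-;\bbF_q)^*\le \pi_{\ge 0}\calA ug_+(\Lambda_+;\bbF_q)^*$ comes solely from the fact that $\Lambda_+$ may have equivalence classes outside the image of $f$. Your conclusion survives because you only use the weak consequence that the weight of $[\epsilon_+]$ is at least that of $[\epsilon_-]$; but had you carried the purported factor $q^{-\chi(\Sigma)}$ through the computation, you would have derived the strictly stronger (and generally false) inequality $R_{\Lambda_-}(q^{1/2}-q^{-1/2})\le q^{\chi(\Sigma)/2}R_{\Lambda_+}(q^{1/2}-q^{-1/2})$. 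So the verification you flag as the ``main obstacle'' resolves by showing the degree-$1$ summand simply drops out.
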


This corollary gives a new obstruction to the existence of exact Lagrangian cobordisms.
In particular, we have a new and simpler proof to the fact given by \cite{Chasymmetric} that
there does not exist an exact Lagrangian cobordism from the Legendrian $m(9_{46})$ knot shown in Figure \ref{9_46} to the Legendrian unknot. 
This fact is crucial to prove that Lagrangian concordance is not  a symmetric relation.

Another important step towards proving the injectivity and faithfulness in Theorem \ref{thm5} is
to understand the differential map of the Cthulhu chain complex better.
Analogous to a result for Legendrian submanifolds in \cite{EESduality},
we give a bijective correspondence between rigid holomorphic disks with boundary on a $2$-copy of $
\Sigma$ and rigid holomorphic disks with boundary on $\Sigma$ together with Morse flow lines.
With this in hand, we can decompose the Cthulhu chain complex in various ways and 
recover the three long exact sequences in \cite{CDGG}.

{\bf Outline. }In Section 2, we  review the Chekanov-Eliashberg DGA of a Legendrian submanifold 
and the DGA map induced by a Lagrangian cobordism.
In Section 3, we   introduce the augmentation category for a Legendrian submanifold  and describe the $A_{\infty}$ category map induced by an exact Lagrangian cobordism.
 In Section 4, we  review the Floer theory of Lagrangian cobordisms.
Finally, using the techniques in Section 4, we  prove the main  result Theorem \ref{thm1}  in Section 5
and discuss its applications.

{\bf Acknowledgments:}
The author would like to thank Lenhard Ng for introducing the problem and many enlightening discussions. The author also thanks Baptiste Chantraine, John Etnyre and Michael Abel for helpful conversations, the referee for pointing out Theorem \ref{thm 1}, Corollary \ref{iso} and Theorem \ref{cocat}, and Caitlin Leverson for comments on an earlier draft.
 This work was partially supported by NSF  grants
DMS-0846346 and DMS-1406371.

\vspace{0.5in}

\section{Legendrian contact homology DGA and exact Lagrangian cobordisms}
\subsection{The Legendrian contact homology DGA}\label{DGA}
In this section, we  review the Legendrian contact homology DGA
from the geometric perspective of \cite{EHK} and the combinatorial perspective of \cite[Section 2.2.1]{NRSSZ}. 
We refer readers to \cite{Che,ENS, Ng} for a more detailed introduction.

Let $\Lambda$ be a Legendrian submanifold in the standard contact space $(\bbR^3, \xi=\ker \alpha)$, where $\alpha = dz -ydx$. 
For simplicity when defining the degree, we assume throughout the paper that $\Lambda$ has rotation number $0$.

Let $(\calA(\Lambda; \bbF[H_1(\Lambda)]), \partial)$ denote the Legendrian contact homology DGA of $\Lambda$,  which is also called  Chekanov-Eliashberg DGA.
The underlying algebra $\calA(\Lambda; \bbF[H_1(\Lambda)])$ is a non-commutative unital graded algebra 
over a field $\bbF$ generated by 
$$\{c_1,\dots , c_m, t_1, t_1^{-1}, \dots , t_M, t_M^{-1}\}$$
 with relations 
$t_i t_i^{-1}=1$ for $ i =1,\dots, M$.
Here $c_1,\dots , c_m$ are Reeb chords of $\Lambda$ 
and $\{t_1, \dots,t_M\} $ is a basis of the singular homology $H_1(\Lambda)$.
The grading of a Reeb chord $c$ is defined as $$|c|= CZ(\gamma_c)-1,$$ 
where $\gamma_c$ is a capping path for $c$
and $CZ$ is the Conley-Zehnder index introduced  by \cite{EES}.
See \cite[Section 4.1]{DR} for the way to choose a capping path $\gamma_c$ for a Reeb chord of a Legendrian link.
The grading of a Reeb chord depends on the choice of capping paths,
but the difference between  two Reeb chords' gradings is independent of the choice of capping paths.
Furthermore, set the grading of $t_i$ to be zero for $ i =1,\dots, M$, and then
extend the definition of degree to $ \calA(\Lambda; \bbF[H_1(\Lambda)])$ through the relation $|ab|= |a|+|b|$.

To define the differential $\partial$, we need to a \textbf{cylindrical almost complex structure} $J$ on $\big(\bbR \times \bbR^3, d(e^t \alpha)\big)$,
i.e.,
\begin{itemize}
\item $J$ is compatible with the symplectic form $d(e^t \alpha)$;
\item $J$ is invariant under the action of $\bbR_t$;
\item $J(\partial_t)= \partial_z$ and $J(\xi)=\xi$.
\end{itemize}

For a generic choice of cylindrical almost complex structure $J$,
the differential $\partial$ is defined by counting rigid $J$-holomorphic disks in $\big(\bbR_t \times \bbR^3, d(e^t \alpha)\big)$ with boundary on $\bbR \times \Lambda$. 
See Figure \ref{differential} for an example.
 For Reeb chords $a, b_1,\dots ,  b_m$ of  $\Lambda$, let
$\calM(a;b_1,\dots ,  b_m)$ denote the moduli space of $J$-holomorphic disks:
$$u: (D_{m+1}, \partial D_{m+1}) \to (\bbR\times \bbR^3, \bbR\times \Lambda)$$
such that
\begin{itemize}
\item $D_{m+1}$ is a $2$-dimensional unit disk with $m+1$ boundary points $p, q_1, \dots , q_m$ removed
and the points $p, q_1, \dots , q_m$ are labeled in a counterclockwise order;
 \item  $u$ is asymptotic to $[0, \infty) \times a$ at $p$;
 \item  $u$ is asymptotic to $( -\infty, 0] \times b_i$ at $q_i$.
\end{itemize} 

\begin{figure}[!ht]
\begin{minipage}{2in}
\begin{center}
\labellist
\small

\pinlabel $\Lambda$  at 140 200
\pinlabel $\bbR \times \Lambda$  at 150 120
\pinlabel $\Lambda$ at 140 50

\pinlabel $a$ at 58 230
\pinlabel $b_1$ at 23 25
\pinlabel $b_2$ at 43 15
\pinlabel $b_3$ at 70 15

\endlabellist

\includegraphics[width=1.4in]{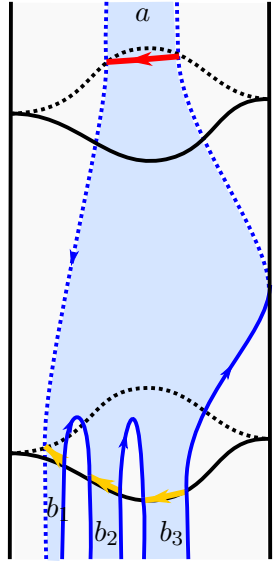}

\end{center}
\label{differential}
\end{minipage}
\begin{minipage}{3in}
\caption{
An example of a $J$-holomorphic disk with boundary on $\bbR\times \Lambda$.
The arrows on the Reeb chords indicate the orientation of the Reeb chords,
while the arrows on the disk boundary indicate the orientation inherited from the unit disk boundary with counterclockwise orientation through $u$.}

\end{minipage}
\end{figure}
Let
$\widetilde{\calM}(a; b_1, \dots, b_m)$ denote the quotient of  $\calM(a; b_1, \dots, b_m)$ by vertical translation of $\bbR_t$.
When $\dim \widetilde{\calM}(a; b_1, \dots, b_m)=0$, the disk $u \in \calM(a; b_1, \dots, b_m)$ is called \textbf{rigid}.
The gradings of corresponding Reeb chords satisfy
$$|a| - |b_1|- \cdot \cdot \cdot -|b_m|=1. $$
For the image of the boundary segment from $q_i$ to $q_{i+1}$ under $u$, one can close it up on $\bbR \times \Lambda$
in a particular way as described in \cite[Section 3.2]{EHK} 
and take the homology class of this curve in $H_1(\Lambda)$, denoted by $\tau_i$.
Here we use $q_0=q_{m+1}=p$.
Moreover, if $\Lambda$ is spin, all the relevant  moduli spaces of $J$-holomorphic disks
admit a coherent orientation. 
Hence, one can associate a sign $s(u)$ to  each rigid $J$-holomorphic disk $u$.
In this way, associate the rigid $J$-holomorphic disk $u$ with a monomial 
$$w(u) =s(u) \tau_0 b_1 \tau_1 \cdots b_{m} \tau_{m}.$$
We call the homology classes $\tau_i$, for $i=1,\dots, m$, the \textbf{coefficients} of $w(u)$.
The {\bf differential} on Reeb chords  is defined by counting rigid $J$-holomorphic disks:
$$\partial a = \displaystyle{\sum_{\dim \widetilde{\calM}(a; b_1, \dots, b_m)=0} \sum_{u \in \calM(a; b_1, \dots, b_m)}  w(u)}.$$
Let $\partial t_i= \partial t_i^{-1}=0$ for $i=1,\dots, M$ and extend the differential to $\calA(\Lambda; \bbF[H_1(\Lambda)])$ through the Leibniz rule
$$\partial(xy) = (\partial x) y + (-1)^{|x|} x(\partial y).$$
An implicit condition for $J$-holomorphic disks is the positive energy constraint.
For a Reeb chord $a$, define the action of $c$ by 
$$\fraka(c) = \int_c \alpha,$$ which is the length of the Reeb chord $c$.
The energy $E(u)$ of a $J$-holomorphic disks $u \in \calM(a; b_1, \dots, b_m)$ satisfies
$$E(u)= \fraka(a) - \fraka(b_1) - \cdots - \fraka(b_m).$$
Therefore, to make each $J$-holomorphic disk endow positive energy, we have 
$$\fraka(b_1) + \cdots + \fraka(b_m) < \fraka(a).$$

There is an equivalent definition from the combinatorial perspective. 
Project $\Lambda$ onto the $xy$-plane to get the Lagrangian projection $\pi_{xy}(\Lambda)$ of $\Lambda$.
After possibly perturbing  $\Lambda$, we can assume that there is a $1-1$ correspondence between the double points of $\pi_{xy}(\Lambda)$ and the Reeb chords of $\Lambda$.
Suppose $\Lambda$ is an $M$-component Legendrian link.
Decorate the diagram with an orientation and a set of \textbf{minimal base points} $\{\ast_1,\dots,\ast_M\}$, i.e.,
\begin{itemize}
\item there is exactly one point in $\{\ast_1,\dots,\ast_M\}$ on each component of $\Lambda$ and
\item the set $\{\ast_1,\dots,\ast_M\}$ does not include any end points of Reeb chords of $\Lambda$.
\end{itemize}
The graded algebra $\calA(\Lambda, \ast_1,\dots , \ast_M)$ is a non-commutative unital graded algebra 
over a field $\bbF$ generated by $\{c_1,\dots , c_m, t_1, t_1^{-1}, \dots , t_M, t_M^{-1}\}$ with relations 
$\{t_i t_i^{-1}=1|\  i =1,\dots, M\}$, where $c_1,\dots , c_m$ are double points of $\pi_{xy}(\Lambda)$ and 
$t_1, \dots, t_M$ correspond to the base points $\ast_1,\dots , \ast_M$.
The grading is defined the same as above.
For the unit disks $D_{m+1}$ as defined above, consider $\Delta(a; b_1, \dots , b_m)$, the space of orientation-preserving smooth immersions up to parametrization 
$$u : (D_{m+1}, \partial D_{m+1}) \to (\bbR^2, \pi_{xy}(\Lambda))$$  
with the following properties:
\begin{itemize}
\item $u$ can be extended to the unit disk $\overline{D_{m+1}}$ continuously;
\item $u(p)=a$ and the neighborhood of $a$ in the image of $u$ is a single positive quadrant (see Figure \ref{crossing});
\item $u(q_i)=b_i$ and the neighborhood of $b_i$ in the image of $u$ is a single negative quadrant for $1\le i \le m$ (see Figure \ref{crossing}).
\end{itemize}

\begin{figure}[!ht]
\begin{center}
\labellist
\small
\pinlabel $+$ at 45 25 
\pinlabel $+$ at 15 25 
\pinlabel $-$ at 30 40
 \pinlabel $-$ at 30 10
\endlabellist
\includegraphics[width=1 in]{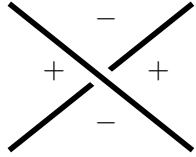}
\caption{At each crossing, the quadrants labeled with $+$ sign are called {\bf positive quadrants} while the other two are called {\bf negative quadrants}.}
\label{crossing}
\end{center}
\end{figure}

If, when traversing $\overline{\partial D_{m+1}}$ counterclockwise from $a$, one encounters Reeb chords and base points in a sequence 
$s_1, \dots, s_l$,
then we associate $u$ with a monomial 
$w(u)=s(u) w(s_1) \cdots w(s_l)$, where
\begin{itemize}
\item  $s(u)$ is the sign associated to the disk $u$ induced from moduli space coherent orientation;
\item if $s_i$ is a Reeb chord $b_j$, then $w(s_i) = b_j$;
\item if $s_i$ is a base point $\ast_j$ for some  $j=1,\dots, M$, then $w(s_i) = t_j$ if the orientation of the boundary agrees with the orientation of the link
and $w(s_i) = t_j^{-1}$ if is otherwise.
\end{itemize}

Define the {\bf differential} on generators as follows:
$$
\begin{array}{rl}
\partial a &= \displaystyle{\sum_{|a| - |b_1|- \cdots -|b_m|=1}\  \sum_{u \in \Delta(a; b_1, \dots, b_m)}  w(u)};\\
&\\
\partial t_j & = \partial t_j^{-1} = 0, \hspace{0.2in} j=1,\dots, M.\\
\end{array}
$$
This can be extended to the whole DGA through the Leibniz rule.

For all the definitions of DGAs $(\calA, \partial)$ above, the differential $\partial$ has degree $-1$
and satisfies  $\partial^2=0$ (\cite{Che}, \cite{ENS}).
Up to stable tame isomorphism, the Legendrian contact homology DGA  is an invariant of $\Lambda$ under Legendrian isotopy. 
In this sense of equivalence, the combinatorial definition does not depend on the choice of base points
\cite{NR}.

However, the homology of the DGA is hard to compute in general.
Let us introduce augmentations of a DGA and use that to deduce  linearized contact homology, which is much easier to compute.
Let $(\calA, \partial)$  be a DGA  over a field $\bbF$ of  a Legendrian link $\Lambda$ with base points.
A \textbf{graded augmentation} of $\calA$ is a DGA map 
$$\epsilon: (\calA, \partial) \to (\bbF, 0),$$
where $(\bbF, 0)$ is a chain complex that is $\bbF$ in degree $0$ and is $0$ in other degrees.
In other words, a graded augmentation
 is an algebra map $\epsilon: \calA \to \bbF$ such that $\epsilon(1)=1$, $\epsilon \circ \partial=0$ and 
$\epsilon(a)=0$ if $|a| \neq 0$.

Given a graded augmentation $\epsilon$,
define  $\calA^{\epsilon}:=\calA \otimes \bbF/(t_i=\epsilon(t_i))$.
Notice that 
the differential
$\partial$ descends to $\calA^{\epsilon}$ since $\partial(t_i)=0$.
Elements in $\calA^{\epsilon}$ are summands of words of Reeb chords.
Let  $C$ be a free $\bbF$-module generated by  Reeb chords.
We can decompose $\calA^{\epsilon}$ in terms of word length as $\calA^{\epsilon}=\displaystyle{\bigoplus_{n \ge 0} C^{\otimes n}}$.
Let $\calA_+^{\epsilon}$  be the part of $\calA^{\epsilon}$ containing the words with length at least $1$, i.e. 
$\calA_+^{\epsilon}= \displaystyle{\bigoplus_{n \ge 1} C^{\otimes n}}$.
Consider a new differential $\partial^{\epsilon} : \calA^{\epsilon} \to  \calA^{\epsilon}$:
$$\partial^{\epsilon} := \phi_{\epsilon} \circ \partial \circ \phi^{-1}_{\epsilon},$$
where
 $\phi_{\epsilon}: \calA^{\epsilon} \to  \calA^{\epsilon}$ is an automorphism defined by 
$\phi_{\epsilon}(a) = a + \epsilon(a)$.
Observe that $\partial^{\epsilon}$ preserves $\calA_+^{\epsilon}$ and does not decrease the minimal length of a word.
Thus, it descends to a differential on $\calA_+^{\epsilon}/(\calA_+^{\epsilon})^2 \cong C$.
The homology of $(C, \partial^{\epsilon})$ is called  \textbf{linearized contact homology} of $\Lambda$ with respect to $\epsilon$, denoted by $LCH_{*}^{\epsilon}(\Lambda)$.
The chain complex $(C, \partial^{\epsilon})$ is called {\bf linearized contact homology chain complex}.

\subsection{Exact Lagrangian cobordisms}\label{sectioncob}
We now review the DGA map induced by an exact Lagrangian cobordism \cite{EHK}
with coefficients in general fields (following the orientation convention of \cite{EESorientation}).
In other words, an exact  Lagrangian cobordism $\Sigma$ from $\Lambda_-$ to $\Lambda_+$ gives a DGA map  $$ \phi_{\Sigma}: \calA(\Lambda_+; \bbF[H_1(\Sigma)]) \to  \calA(\Lambda_-; \bbF[H_1(\Sigma)]).$$
As required in Section \ref{aug},
we restrict to the case where $\Lambda_+$ and $\Lambda_-$ are Legendrian knots with a single base point, denoted by $\ast_+$ and $\ast_-$, respectively.
We modify the DGA map such that the coefficients only depend on the base points but not depend on the cobordism,
i.e., we get a DGA map $$\phi_{\Sigma}: \calA(\Lambda_+, \ast_+) \to  \calA(\Lambda_-, \ast_-).$$

\begin{defn}\label{cobdef}
Suppose  $\Lambda_{\pm}$ are Legendrian submanifolds in $(\bbR^3, \ker \alpha)$, where $\alpha = dz -y dx$.
An \textbf{exact Lagrangian cobordism} $\Sigma$ from $\Lambda_-$ to $\Lambda_+$ is  a $2$-dimensional surface in $\big(\bbR \times \bbR^3, \omega= d(e^t \alpha)\big)$
 (see Figure \ref{lagcob}) such that 
for some big number $N>0$,
\begin{itemize}
\item $\Sigma \cap \big((N, \infty) \times \bbR^3\big) = (N, \infty) \times \Lambda_+ ,$
\item $\Sigma \cap \big((-\infty, -N) \times \bbR^3\big) =(-\infty, -N)  \times \Lambda_- $ and 
\item $\Sigma \cap \big([-N, N] \times \bbR^3\big)$ is compact.
\end{itemize}
Moreover, there exists a smooth function $g: \Sigma \to \bbR$ such that  $$e^t \alpha\mid_{T{\Sigma}} = dg$$
 and $g$ is constant when $t\leq -N$ and $t\ge N$. 
The function $g$ is called a \textbf{primitive} of $\Sigma$.
\end{defn}

For a spin exact Lagrangian cobordism $\Sigma$ from $\Lambda_-$ to $\Lambda_+$, the Legendrian submanifolds $\Lambda_{\pm}$ inherit induced spin structures.
Hence $\Lambda_{\pm}$ have
$\bbF[H_1(\Lambda_{\pm})]$ coefficients DGAs $(\calA(\Lambda_{\pm}; \bbF[H_1(\Lambda_{\pm})]), \partial)$, respectively,
as described in Section \ref{DGA}.
Ekholm, Honda and K{\'a}lm{\'a}n in \cite{EHK} showed that 
an exact Lagrangian cobordism $\Sigma$ induces a DGA map from  $\calA(\Lambda_+)$ to  $\calA(\Lambda_-)$ with $\bbF[H_1(\Sigma)]$ coefficients. 
In order to see that, first, we need to view the DGAs of $\Lambda_{\pm}$ as DGAs with $\bbF[H_1(\Sigma)]$ coefficients.
Notice that the inclusion $H_1(\Lambda_{\pm}) \hookrightarrow H_1(\Sigma)$ induces a canonical inclusion map $\bbF[H_1(\Lambda_{\pm})] \hookrightarrow \bbF[H_1(\Sigma)]$ of the group ring coefficients,
which makes it nature to consider the DGAs  of $\Lambda_{\pm}$ with $\bbF[H_1(\Sigma)]$ coefficients.
Specifically, the new DGA $\calA\big(\Lambda_{\pm}; \bbF[H_1(\Sigma)]\big)$ is generated by Reeb chords of $\Lambda_{\pm}$  and elements in $H_1(\Sigma)$ over $\bbF$.
The differential is defined by the original differential in $\calA\big(\Lambda_{\pm}; \bbF[H_1(\Lambda_{\pm})]\big)$ composed with the inclusion map $H_1(\Lambda_{\pm}) \hookrightarrow H_1(\Sigma)$.

Second, construct a DGA map with $\bbF[H_1(\Sigma)]$ coefficients.
Consider an almost complex structure $J$ that is compatible with the symplectic form $\omega$ and is cylindrical on both ends. 
In other words, $J$ matches the cylindrical almost complex structures on both cylindrical ends.
Fix a generic choice of such an almost complex structure $J$.
For Reeb chords $a$ of $\Lambda_+$ and $b_1, \dots, b_m$ of $\Lambda_-$,
 define $\calM(a; b_1, \dots, b_m)$ to be the moduli space of the $J$-holomorphic disks:
$$u: (D_{m+1}, \partial D_{m+1}) \to (\bbR\times \bbR^3, \Sigma)$$
such that
\begin{itemize}
\item $D_{m+1}$ is a $2$-dimensional unit disk with $m+1$ boundary points $p, q_1, \dots , q_m$ removed 
and the points $p, q_1, \dots , q_m$ are arranged  in a counterclockwise order;
 \item  $u$ is asymptotic to $[N,  \infty) \times a$ at $p$;
 \item  $u$ is asymptotic to $( -\infty, -N] \times b_i$ at $q_i$.
\end{itemize} 

When $\dim \calM(a; b_1, \dots, b_m)=0$, the disk $u \in \calM(a; b_1, \dots, b_m)$ is called \textbf{rigid}.
The gradings of corresponding Reeb chords satisfy
$$|a| - |b_1|- \cdot \cdot \cdot -|b_m|=0. $$
For the image of the boundary segment from $q_i$ to $q_{i+1}$, one can close up in a similar way as the one in the definition of the DGA differential and take the homology class in $H_1(\Sigma)$, denoted by $\tau_i$.
If $\Sigma$ is spin, all the relevant moduli spaces of $J$-holomorphic disks admit a coherent orientation.
In particular, each rigid $J$-holomorphic disk obtains a sign, denoted by $s(u)$.
Associate a monomial $w(u)$ to the $J$-holomorphic disk $u$ as
$$w(u) = s(u)\tau_0 b_1 \tau_1 \cdots b_{m} \tau_{m}.$$
The homology classes $\tau_i$ for $i=1,\dots, m$, are called the \textbf{coefficients} of $w(u)$.
The DGA map is defined by counting rigid $J$-holomorphic disks with boundary on $\Sigma$:
$$\phi (a) = \displaystyle{\sum_{\dim \calM(a; b_1, \dots, b_m)=0}\  \sum_{u \in \calM(a; b_1, \dots, b_m)}  w(u)}.$$
We can extend  the morphism to $\calA(\Lambda_+; \bbF[H_1(\Sigma)])$ by setting $\phi(t) =t$ for any generator $t$ in $H_1(\Sigma)$ and applying the Leibniz rule.

In order to modify the coefficients of the DGA map $\phi$,
let us consider $H_1(\Sigma)$ more precisely. 
To simplify the description, we restrict $\Sigma$ to $[-N, N]\times \bbR^3$, denote by $\Sigma$ as well.
According to Poincar{\'e} duality, $H^1(\Sigma) \cong H_1(\Sigma, \Lambda_+ \cup \Lambda_-)$.
In particular,
for any loop $\alpha$ in $\Sigma$ with ends on $ \Lambda_+ \cup \Lambda_-$, which is an element in $H_1(\Sigma, \Lambda_+ \cup \Lambda_-)$, 
there is an element $\theta_{\alpha}$ in $H^1(\Sigma)$
such that for any oriented loop $\gamma$ on $\Sigma$, the intersecting number of $\alpha$ and $\gamma$ is $\theta_{\alpha}(\gamma)$.
Thus, in order to know the homology class of a curve $\gamma$ in $H_1(\Sigma)$, 
we only need to count the intersection number of each generator curve of $H_1(\Sigma, \Lambda_+ \cup \Lambda_-)$ with $\gamma$.

\begin{figure}[!ht]
\labellist
\small
\pinlabel $\Lambda_+$  at 150 140
\pinlabel $\Lambda_-$  at 160 25
\pinlabel $\Sigma$  at 150 85
\pinlabel $\ast_+$  at 50 135
\pinlabel $\ast_-$  at 40 0
\pinlabel $\alpha$  at 52 80
\endlabellist
\includegraphics[width=1.5in]{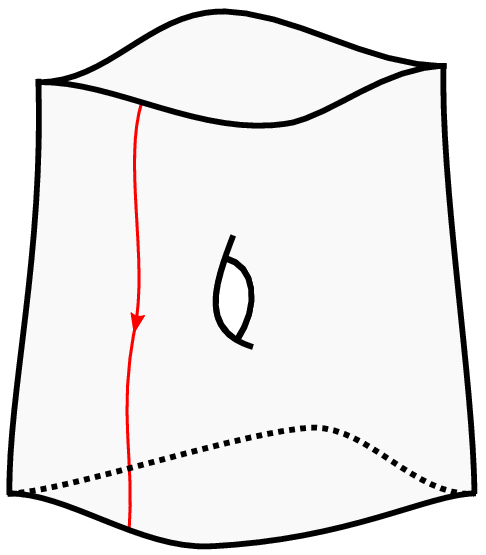}
\caption{Curve $\alpha$ on a cobordism.}
\label{cob}
\end{figure}

Consider  a connected exact Lagrangian cobordism $\Sigma$ from a Legendrian knot $\Lambda_-$ to a Legendrian knot $\Lambda_{+}$ (see Remark \ref{connect} for the reason that we assume that $\Sigma$ is connected).
Choose base points $\ast_+$ and $\ast_-$ for $\Lambda_+$ and $\Lambda_-$, respectively.
There exists a curve $\alpha$ on $\Sigma$ from $\ast_+$ to $\ast_-$ 
with exactly one intersection with $\Lambda_+$ and $\Lambda_-$, respectively.
An example is shown in Figure \ref{cob}.
Let $V^*$ denote the subgroup of $H^1(\Sigma)$ that is generated by the Poincar{\'e} dual of curve  $\alpha$.
The dual space $V$ in $H_1(\Sigma)$ is isomorphic to $\bbZ$.

Now we can modify the DGA map $\phi$ described above to be a map from $\calA(\Lambda_+; \bbF[V])$ to $\calA(\Lambda_-; \bbF[V])$.
First, restrict the generators of $\calA(\Lambda_{\pm})$ to Reeb chords of $\Lambda_{\pm}$ and a basis of $V$.
Second,
project the coefficients $\tau_i$ of the monomial $w(u)$ from $H_1(\Sigma)$ to $V$.
Therefore, the DGA map works in $\bbF[V]$ coefficients.
Indeed, the definitions of $\calA(\Lambda_{\pm}; \bbF[V])$ match the definition of $\calA(\Lambda_{\pm}, \ast_{\pm})$, respectively.
Hence a connected exact Lagrangian cobordism $\Sigma$ induces a DGA map with $\bbF[V]$ coefficients from the DGA of $\Lambda_+$ with a single base point to the DGA of $\Lambda_-$ with a single base point: 
$$\phi: (\calA(\Lambda_{+}, \ast_{+}), \partial)\to  (\calA(\Lambda_{-}, \ast_{-}), \partial).$$

This DGA map does depend on the choice of the curve $\alpha$ that connecting the two base points.

\section{The Augmentation Category}\label{aug}

\subsection{$A_{\infty}$ categories}\label{Acat}
In this section, we give a lightning review of $A_{\infty}$ algebras and $A_{\infty}$ categories  following \cite{NRSSZ}. 
See \cite{keller, GJ} for a more detailed introduction.
\begin{defn}[{\cite[Section 3.1]{keller}}]
An {\bf $A_{\infty}$ algebra} over a field $\bbF$ is a $\bbZ$-graded vector space $A$ endowed with degree $2-n$ maps $m_n: A^{\otimes n} \to A$ such that
$$\displaystyle{\sum_{r+s+t=n}(-1)^{r+st} m_{r+1+t}(1^{\otimes r} \otimes m_s \otimes 1^{\otimes t})=0}.$$
\end{defn}

The most important things we need among these complicated relations are
\begin{itemize}
\item $m_1$ is a differential on $A$ (i.e., $m^2_1=0$) and
\item $m_2$ is associative after passing to the homology with respect to $m_1$. 
\end{itemize}

An $A_{\infty}$ algebra can be achieved nicely through the following  construction.
Let $\overline{T}(C) =\displaystyle{ \bigoplus_{n \ge 1} C^{\otimes n}}$ be a graded vector space over $\bbF$ equipped with a {\bf co-differential} $b$,  i.e.,
\begin{itemize}
\item b has degree $1$, $b^2=0$ and
\item $b= \oplus b_n$, where $b_n$ is a map  $C^{\otimes n}  \to C$, satisfies the co-Leibniz rule
$$\Delta b = (1\otimes b + b \otimes 1) \Delta,$$
where $\Delta(a_1 \otimes\cdot \cdot\cdot \otimes a_n)=\displaystyle{\sum_{i=1}^n}(a_1 \otimes\cdots\otimes a_i)\otimes(a_{i+1} \otimes \cdots \otimes a_n)$.
\end{itemize}

Let $C^{\vee}:= C[-1]$ and $s: C \to C^{\vee}$ be the canonical degree $1$ identification map $a \mapsto a$.
Define maps $m_n: (C^{\vee})^{\otimes n} \to C^{\vee}$ such that the following diagram commutes for all $n$.

\diag{C^{\otimes n} \arr^{b_n}\ard_{s^{\otimes n}} & C\ard^{s}\\
(C^{\vee})^{\otimes n} \arr_{m_n} & C^{\vee}}
Then $C^{\vee}$ is an $A_{\infty}$ algebra with $m_n$ as $A_{\infty}$ operations \cite{Sta}.
One can check that the degree of $m_n$ is $2-n$.

\begin{eg}\label{mn}
If a Legendrian contact homology DGA $(\calA(\Lambda), \partial)$ has an augmentation $\epsilon$,
the conjugated differential
$\partial^{\epsilon}$ is a differential of $\calA^{\epsilon}_+= \displaystyle{\bigoplus_{n \ge 1} C^{\otimes n}= \overline{T}(C)}$,
where $C$ is the vector space over a field $\bbF$ generated by Reeb chords of $\Lambda$.
We define $\delta^{\epsilon}$ to be the adjoint of $\partial^{\epsilon}$ on $\overline{T}(C^*)=\displaystyle{\bigoplus_{n \ge 1} (C^*)^{\otimes n}}$, where
$C^*$ is the dual of $C$.
More specifically, 
$$\delta^{\epsilon}(b_m^* \otimes \cdots \otimes b_1^*) = \displaystyle{\sum_{a} \textrm{Coefficient}_{b_1b_2\cdots b_m} (\partial^{\epsilon}(a))}.$$
It is not hard to check that $\delta^{\epsilon}$ is a co-differential of $\overline{T}(C^*)$. 
Hence one can use the construction above to construct an $A_{\infty}$ algebra $(C^*)^{\vee}$.
\end{eg}

\begin{defn}\cite{CKESW}
An {\bf $A_{\infty}$ category} over a field $\bbF$ is a category where, for any two objects $\epsilon_1$ and  $\epsilon_2$, 
the morphism is a graded vector space $Hom(\epsilon_1, \epsilon_2)$. 
Moreover, for any objects $\epsilon_1, \epsilon_2, \dots , \epsilon_{n+1}$, there exists a degree $2-n$ map
$$m_n: Hom(\epsilon_n,\epsilon_{n+1}) \otimes \cdots \otimes Hom(\epsilon_1,\epsilon_{2}) \to Hom(\epsilon_1,\epsilon_{n+1})$$
satisfying
$$\displaystyle{\sum_{r+s+t=n}(-1)^{r+st} m_{r+1+t}(1^{\otimes r} \otimes m_s \otimes 1^{\otimes t})=0}.$$
\end{defn}

As noticed before, the first $A_{\infty}$ operation $m_1$ is a differential for $Hom(\epsilon_1, \epsilon_2)$ with degree $1$. 
Denote its cohomology by $H^*Hom(\epsilon_1, \epsilon_2)$.
Moreover, we have that $m_2$ descends to an associative map on the cohomology level:
$$m_2: H^*Hom(\epsilon_2, \epsilon_3) \otimes H^*Hom(\epsilon_1, \epsilon_2) \to H^*Hom(\epsilon_1, \epsilon_3)$$
for any objects $\epsilon_1, \epsilon_2, \epsilon_3$.

An {\bf $A_{\infty}$ morphism} between two $A_{\infty}$ categories $f: \calA \to \calB$ maps
the object $\epsilon$ of $\calA$ to $f(\epsilon)$ of $\calB$ and
 for any objects $\epsilon_1, \epsilon_2, \dots , \epsilon_{n+1}$ of $\calA$, 
there exists a map 
$$f_{n}: Hom(\epsilon_n,\epsilon_{n+1}) \otimes \cdot \cdot \cdot \otimes Hom(\epsilon_1,\epsilon_{2}) \to Hom(f(\epsilon_1),f(\epsilon_{n+1}))$$
satisfying the $A_{\infty}$ relations \cite{keller}.
In particular, the first map $f_1$, called {\bf the category map on the level of morphisms}, maps
the morphism $Hom(\epsilon_1, \epsilon_2)$ of $\calA$ to the morphism $Hom(f(\epsilon_1) , f(\epsilon_2))$ of $\calB$.
From the  $A_{\infty}$ relations, we know that
\begin{itemize}
\item the functor $f_{1}$, the category map on the level of morphisms, commutes with $m_1$ and thus $f_{1}$ descends to a map on cohomology:
$$f^*: H^*Hom(\epsilon_1, \epsilon_2) \to H^*Hom(f(\epsilon_1), f(\epsilon_2));$$
\item for any $a\in Hom(\epsilon_2, \epsilon_3) $ and $b\in Hom(\epsilon_1, \epsilon_2)$,
we have $$f^*(m_2([a], [b])) = m_2(f^*[a], f^*[b]),$$
i.e, the composition map $m_2$ commutes with $f^*$ when passing to the cohomology level.
\end{itemize}
An  $A_{\infty}$ morphism between two $A_{\infty}$ categories $f: \calA \to \calB$ induces a functor on the cohomology categories, $\tilde{f}: H^* \calA \to H^* \calB$.
It behaves the same as $f$ on the object level.
On the level of morphisms $\tilde{f}=f^*$.
The functor $\tilde{f}$ is {\bf faithful} if $f^*$ is injective and is {\bf fully faithful} if $f^*$ is an isomorphism for any morphism in $H^*\calA$.

\subsection{The augmentation category}\label{augcat}
In this section, we briefly review the augmentation category $\calA ug_+(\Lambda)$ following \cite{NRSSZ}.

Let $\Lambda$ be an oriented Legendrian knot in $(\bbR^3, \ker \alpha)$ endowed with a single base point
$\ast$.
Denote its Legendrian contact homology DGA  by $(\calA, \partial)$.
Given a field $\bbF$,
the objects of the augmentation category $\calA ug_+(\Lambda)$ are augmentations of $(\calA, \partial)$ to $\bbF$,
$$\epsilon: \calA \to \bbF.$$
In order to describe the morphism $Hom_+(\epsilon_1, \epsilon_2)$ for any two objects $\epsilon_1$ and $\epsilon_2$, 
we need to study the DGA of a $2$-copy of $\Lambda$, denoted by $\Lambda^{(2)}$.

By the Weinstein tubular neighborhood theorem, we can identify a neighborhood of $\Lambda$ with
a neighborhood of the zero section in the $1$-jet space $J^1(\Lambda) = T^*(\Lambda)\times \bbR$ through a contactomorphism.
The contact form in $J^1(\Lambda)$ is $\alpha = dz-pdq$, 
where $q$ is the coordinate on $\Lambda$ and $p$ is the coordinate in the cotangent direction.
For any $C^1$ small function $f: \Lambda \to \bbR$, 
the $1$-jet  $j^1f=\{(q, f'(q), f(q))| \ q \in \Lambda\}$ is a Legendrian knot in $J^1(\Lambda)$
and thus is a Legendrian knot in $\bbR^3$.
Now choose a particular Morse function $f: \Lambda \to (0, \delta)$ such that
\begin{itemize}
\item $\delta$ is smaller than the minimum length of Reeb chords of $\Lambda$,
\item
the Morse function $f$ has exactly $1$ local maximum point at $x$ and $1$ local minimum point at $y$,
and 
\item
around the base point $\ast$, three points $\ast, x, y$ show up in order when traveling along the link (see Figure \ref{base}).
\end{itemize}
Decorate $j^1f$ with a base point in the same location 
and with the same orientation as $\Lambda$.
Now $\Lambda \cup j^1f$ is a $2$-copy of $\Lambda$, denoted by $\Lambda^{(2)}$. 
Label $\Lambda^{(2)}$ from top (higher $z$ coordinate) to bottom (lower $z$ coordinate) by $\Lambda^1$ and $\Lambda^2$.
An example of the $2$-copy of the trefoil with a single base point is shown in Figure \ref{trefoil}.
\begin{figure}[!ht]
\begin{minipage}{3in}
\vspace{0.3in}

\begin{center}
\labellist
\pinlabel $\ast$ at 45 5
\pinlabel $x$ at 75 15
\pinlabel $y$  at 105 25 
\endlabellist
\includegraphics[width=2in]{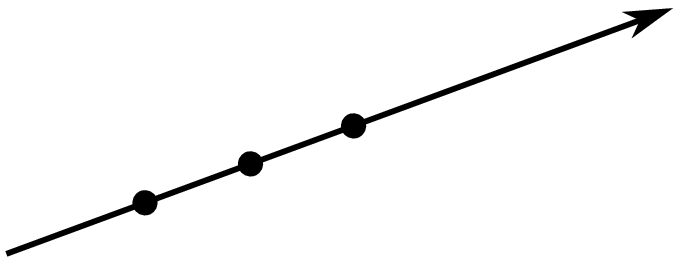}
\vspace{0.3in}
\caption{A neighborhood of the base point $\ast$ on $\Lambda$. The arrow indicates the orientation of $\Lambda$.}
\label{base}
\end{center}

\end{minipage}
\begin{minipage}{3.3in}
\begin{center}
\labellist
{\color{red}
\pinlabel $\Lambda_1$ at 110 100
}
{\color{blue}
\pinlabel $\Lambda_2$ at 110 70
}
 
\endlabellist
\includegraphics[width=2.5in]{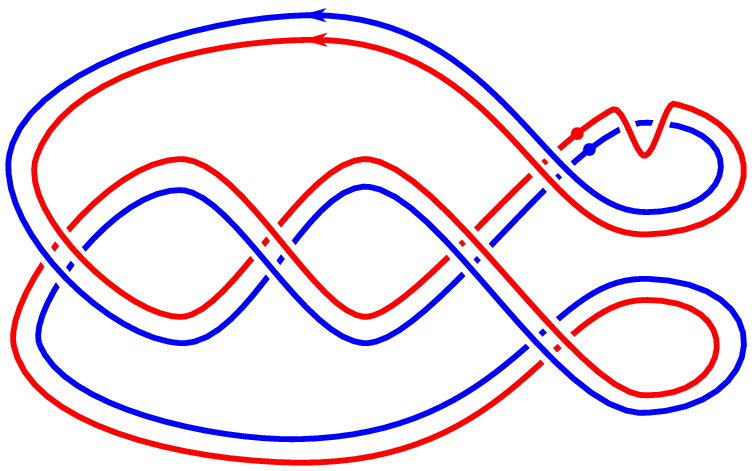}
\caption{The Lagrangian projection of a $2$-copy of the trefoil with a single base point.}
\label{trefoil}
\end{center}
\end{minipage}
\end{figure}

The Legendrian contact homology DGA $(\calA(\Lambda^{(2)}), \partial^{(2)})$ of $\Lambda^{(2)}$ can be recovered from the data carried by the DGA $(\calA(\Lambda), \partial)$ of $\Lambda$.
Recall that $\calA(\Lambda)$ is generated  the set  $\calR$ of Reeb chords $ \{a_1, \dots , a_m\}$ and    the set $\calT= \{t, t^{-1}\}$ that corresponds to the base point  as stated in Section \ref{DGA}.
Similarly, divide the set of generators of $\calA(\Lambda^{(2)})$ into two parts $\calR^{(2)} \cup\calT^{(2)}$.
It is obvious that  $\Lambda^{(2)}$ has two base points and  thus
write $\calT^{(2)}$ as $\{(t^1)^{\pm 1}, (t^2)^{\pm1}\}$.
As for the set of Reeb chords $\calR^{(2)}$,
we divide it into four parts
  $\calR^{(2)}= \displaystyle{\bigcup_{i,j = 1,2}\calR^{ij}}$,
where $\calR^{ij}$ is the set of Reeb chords to $\Lambda_i$  from $\Lambda_j$.
Observe that Reeb chords of $\Lambda^{(2)}$ come from two sources.
\begin{itemize}
\item
Each critical point $x$ or $y$ of  the Morse function $f$ gives one Reeb chord in $\calR^{12}$, 
denoted by $x^{12}$ and $y^{12}$ respectively.
We call this type of Reeb chords  {\bf Morse Reeb chords}.
\item
Each Reeb chord $a_l$ of $\Lambda$ gives four Reeb chords of $\Lambda^{(2)}$, 
denoted by $a_l^{ij}  \in \calR^{ij}$, where  $i,j= 1,2$ and $ l=1, \dots, m$.
We call these Reeb chords {\bf non-Morse Reeb chords}.

\end{itemize}

It is obvious that $a^{ii}$ and $t^{i}$, for  $i =1,2$, inherit the grading from $a$ and $t$ in $\calA(\Lambda)$ respectively. 
We can choose a family of capping paths such that $|a^{ij}|=
|a|$ for any Reeb chord $a$ of $\Lambda$.
Under this choice of capping paths $\gamma$,
one can show that $CZ(\gamma_{x^{12}})= Ind_f(x)$ for any Morse Reeb chord $x^{12}$
through a similar computation as in \cite{EESduality}.
Hence we have $|x^{12}|=0$ and $|y^{12}|=-1$.

In order to describe the differential $\partial^{(2)}$, we encode the generators in matrices.
Let $A_l$, $X_k,\ Y_k,\ \Delta_k$, for  $1\leq l\leq m$, be $2 \times 2$ matrices:
$$A_l= 
\begin{pmatrix}
a_l^{11}& a_l^{12}\\
a_l^{21}& a_l^{22}\\
\end{pmatrix},
\hspace{0.2in}
X=\begin{pmatrix}
1 & x^{12}\\
0 & 1\\
\end{pmatrix},
\hspace{0.2in}
Y=\begin{pmatrix}
0 & y^{12}\\
0 & 0\\
\end{pmatrix},
\hspace{0.2in}
\Delta=\begin{pmatrix}
t^1 & 0\\
0 & t^2\\
\end{pmatrix}.
$$
The differential $\partial^{(2)}$ is defined on generators as follows by applying entry-by-entry to these matrices:
$$
\begin{array}{rcl}
\vspace{0.1in}
\partial^{(2)}A_l & = &\Phi(\partial a_l)+ YA_l- (-1)^{|a_l|}A_lY\\
\vspace{0.1in}
\partial^{(2)}X & = &\Delta^{-1}Y\Delta X - XY\\
\vspace{0.1in}
\partial^{(2)}Y & = &Y^2\\

\partial^{(2)}\Delta & = &0,\\
\end{array} $$
where $\Phi: \calA \to Mat(2, \calA(\Lambda^{(2)}))$ is a ring homomorphism given by
$\Phi(a_l)=A_l$ and $\Phi(t)=\Delta X$.

Given two augmentations $\epsilon^1$ and $\epsilon^2$ of $(\calA, \partial)$, 
we get an augmentation $\epsilon$
of $(\Lambda^{(2)}, \partial^{(2)})$ by sending 
$a_l^{ii} \mapsto \epsilon^i(a_l)$,
 $t^{ii}\mapsto \epsilon^i(t)$
 and everything else to $0$.
Thus $\partial^{(2)}_{\epsilon}=\phi_{\epsilon}\circ \partial^{(2)} \circ  \phi_{\epsilon}^{-1}$
is a differential of $\calA^{(2)} = \calA(\Lambda^{(2)})/ (t^{ii}=\epsilon(t^{ii}))$.
Both the morphism $Hom_+(\epsilon_1, \epsilon_2)$ and the first $A_{\infty}$ operation $m_1$ are  defined from $(\calA^{(2)}, \partial_{\epsilon}^{(2)})$ through the construction stated in Section \ref{Acat}.
For $i,j=1,2$, let $C^{ij}$ denote the free graded  $\bbF$ algebra generated by $\calR^{ij}$, 
which is a sub algebra of $\calA^{(2)}$.
Notice that $C^{12}$ and $C^{21}$ are closed under $\partial^{(2)}_{\epsilon}$\vspace{0.01in}
since $\epsilon$ vanishes on the components in $C^{11}$ and $C^{22}$ of the image of $\partial^{(2)}_{\epsilon}$.
Hence $C^{12}$ and $C^{21}$ are sub chain complexes of $(\calA^{(2)}, \partial_{\epsilon}^{(2)})$.
Define the morphism $Hom_+(\epsilon_1,\epsilon_2)$ between objects $\epsilon_1$ and $\epsilon_2$ to be $(C^{12})^{\vee}$.
To simplify the notation, we write $(a_l^{12})^{\vee}$ as $a_l^{\vee}$, $(x^{12})^{\vee}$ as $x^{\vee}$
and $(y^{12})^{\vee}$ as $y^{\vee}$. 
Therefore, their gradings satisfy $|a_l^{\vee}|=|a_l|+1$, $|x^{\vee}|=1$ and $|y^{\vee}|=0$. 
The first $A_{\infty}$ operation $m_1$ is defined by the adjoint of $\partial^{(2)}_{\epsilon}$, i.e.,
for any Reeb chord $c\in \calR$, 
$$m_1(c^{\vee})=\displaystyle{\sum_{a\in{\calR}} \textrm{Coefficient}_c (\partial^{(2)}_{\epsilon} a)a^{\vee}}.$$ 
As we noted before, $m_1$ is a differential for $Hom_+(\epsilon_1, \epsilon_2)$. 
The corresponding cohomology is denoted by $H^*Hom_+(\epsilon_1, \epsilon_2)$.
Similarly, define $(C^{21})^{\vee}$ to be $Hom_-(\epsilon_2, \epsilon_1)$.
Take the cohomology of $Hom_-(\epsilon_2, \epsilon_1)$ with respect to $m_1$,
 denoted by $H^*Hom_-(\epsilon_2, \epsilon_1)$.

\begin{rmk}
One may find the convention of $Hom_-(\epsilon_2, \epsilon_1)$ not natural.
However, the notations are consistent in the sense that both $Hom_{+}(\epsilon, \epsilon')$ and 
 $Hom_{-}(\epsilon, \epsilon')$ are generated by Reeb chords from the component with the augmentation $\epsilon'$ to the component with the augmentation $\epsilon$.  
\end{rmk}

The $Hom_+(\epsilon_1, \epsilon_2)$ space and the $Hom_-(\epsilon_1, \epsilon_2)$ space are closely related.
Recall that the generators of $Hom_+(\epsilon_1, \epsilon_2)$  naturally correspond to the Reeb chords in $\calR^{12}$, which consist of non-Morse Reeb chords and Morse Reeb chords.
Note that the lengths of Morse Reeb chords are smaller than the lengths of non-Morse Reeb chords.
Due to  the positive energy constraint,
there does not exist
any holomorphic disk that has a positive puncture at a Morse Reeb chord and a negative puncture at a non-Morse Reeb chord.
Therefore,
the graded sub-vector space of $Hom_+(\epsilon_1, \epsilon_2)$  generated by non-Morse Reeb chords is closed under $m_1$,
and thus is a sub-chain complex.
Indeed, this sub-chain complex agrees with $(Hom_-(\epsilon_1, \epsilon_2), m_1)$.
From \cite{Caitlin}, for a Legendrian knot $\Lambda$ with a single base point, any two augmentations $\epsilon_1,$ and $\epsilon_2$ agree on the generator $t$ that corresponds to the base point.
As a result, by  \cite[Proposition 5.2]{NRSSZ},
 the quotient chain complex  that is generated by $\{x^{\vee}, y^{\vee} \}$ is the Morse co-chain complex induced by the Morse function $f$.
Therefore following long exact sequence holds:
\begin{equation}\label{lesknot}
{\xymatrixcolsep{1pc}
\xymatrix{
\cdots \arr & H^{i-1}(\Lambda) \arr & H^iHom_-(\epsilon_1,\epsilon_2) \arr & H^iHom_+(\epsilon_1,\epsilon_2) \arr & H^{i}(\Lambda) \arr & \cdots }}.
\end{equation}
Furthermore, given that both $Hom_+(\epsilon_1,\epsilon_2)$ and $Hom_-(\epsilon_1,\epsilon_2)$ are vector spaces over the field $\bbF$,
combining the Universal Coefficient Theorem with Sabloff Duality in \cite[Section 5.1.2]{NRSSZ},
we have
\begin{equation}\label{sabdual}
H^k Hom_-(\epsilon_1,\epsilon_2) \cong H^{-k}(Hom_-(\epsilon_1, \epsilon_2)^{\dagger})
\cong H^{2-k} Hom_+(\epsilon_2, \epsilon_1).
\end{equation}
For a chain complex $C$,
the chain complex $C^{\dagger}$ is obtained by 
dualizing the underlying vector space and differential of $C$ and then negating the gradings.

For the other $A_{\infty}$ operators $m_n$, one needs to consider an $n$-copy of $\Lambda$, 
denoted by $\Lambda^{(n)}$.
Construct a  DGA $(\calA^{n}, \partial^{(n)}_{\epsilon})$ of $\Lambda^{(n)}$
that is analogous to $(\calA^{2}, \partial^{(2)}_{\epsilon})$. 
Define $m_n$ to be the adjoint of $\partial^{n}_{\epsilon}$ as in Example \ref{mn}.
See \cite{NRSSZ} for more details.

By \cite{NRSSZ}, the augmentation category described above
does not depend on the choice of the Morse function $f$.
Moreover,  up to $A_{\infty}$ category equivalence,
the augmentation category is invariant of Legendrian knot  under Legendrian isotopy.

A key property of $\calA ug_+(\Lambda)$ is that $\calA ug_+(\Lambda)$ is a {\bf strictly unital} $A_{\infty}$ category,
 with the {\bf units} given by
$$e_{\epsilon}=  -y^{\vee} \in Hom_+(\epsilon,\epsilon),$$
i.e.,
\begin{itemize}
\item $m_1(e_{\epsilon})=0$;
\item for any $\epsilon_1, \epsilon_2$ and any $c\in Hom_+(\epsilon_1, \epsilon_2)$, 
$m_2(c, e_{\epsilon_1}) = m_2(e_{\epsilon_2}, c)=c$;
\item any higher composition involving $e_{\epsilon}$ is $0$.
\end{itemize}

As a result, the corresponding cohomology category $H^*\calA ug_+(\Lambda)$ is a unital category,
which makes it natural to talk about the equivalence relation of objects in $\calA ug_+(\Lambda)$.
\begin{defn}  
Two objects $\epsilon_1$ and $\epsilon_2$ are {\bf equivalent} in $\calA ug_+(\Lambda)$ if they are 
isomorphic in $H^* \calA ug_+(\Lambda)$,
i.e. there exist $[\alpha] \in H^0 Hom_+(\epsilon_1, \epsilon_2)$ and $[\beta] \in H^0 Hom_+(\epsilon_2, \epsilon_1)$
such that $m_2([\alpha],[\beta])=[e_{\epsilon_2}] \in H^0 Hom_+(\epsilon_2, \epsilon_2)$ and 
$m_2([\beta],[\alpha])=[e_{\epsilon_1}] \in H^0Hom_+(\epsilon_1, \epsilon_1)$.

\vspace{.1in}
\begin{center}
\begin{minipage}{5in}
\xymatrixcolsep{3pc}
\xymatrix{\epsilon_1 \ar@/_/[rr]_{\beta} \ar@(ul,dl)_{e_{\epsilon_1}} & & \epsilon_2\ar@/_/[ll]_{\alpha} \ar@(ur,dr)^{e_{\epsilon_2}}}
\end{minipage}
\end{center}

\end{defn}
By \cite{NRSSZ}, for a Legendrian knot with a single base point, two augmentations are equivalent if and only if they are isomorphic as DGA maps.

Suppose $\Sigma$ is a connected exact Lagrangian cobordism from a Legendrian knot $\Lambda_-$ to a Legendrian knot $\Lambda_+$.
It induces a DGA map $\phi$ from the DGA $(\calA(\Lambda_+), \partial)$ with a single base point  to a DGA $(\calA(\Lambda_-), \partial)$ with a single base point.
By \cite[Proposition 3.29]{NRSSZ},
this $DGA$ map $\phi$ induces a unital $A_{\infty}$ category morphism $f$ from $\calA ug_+(\Lambda_-)$ to 
$\calA ug_+(\Lambda_+)$.
The category map sends an augmentation $\epsilon_-$ of $\Lambda_-$ to $\epsilon_+=  \epsilon_-\circ \phi$, which is an augmentation of $\Lambda_+$.
The family of maps $\{f_n\}$ is constructed through a family of $DGA$ morphisms of $n$-copies:
$$
\begin{array}{rcll}
f^{(n)}: (\calA^{(n)}(\Lambda_+), \partial^{(n)}) & \mapsto & (\calA^{(n)}(\Lambda_-), \partial^{(n)}) \\
\Delta & \mapsto & \Delta &\\
Y & \mapsto & Y &\\
X & \mapsto & \Delta^{-1} \cdot \Phi_- \circ f(t)\\
\Phi_+(a) & \mapsto & \Phi_-\circ f(a), & a\in\calA(\Lambda_+).\\
\end{array}
$$
Let $\epsilon_-$ be the augmentation of $(\calA^{(n)}(\Lambda_+)), \partial^{(n)})$ 
that sends $a^{ii}_l \mapsto \epsilon_-^i(a_l)$, $t^{ii}\mapsto \epsilon_-^i(t)$ and everything else to $0$.
Define the map $f_{n-1}$ to be the adjoint of $f_{\epsilon_-}^{(n)}$, where
$$f_{\epsilon_-}^{(n)} = \phi_{\epsilon_-} \circ f^{(n)} \circ \phi_{\epsilon_-}^{-1}.$$
In particular,
$f_1$ can be written as 
\begin{equation}\label{f_1}
\begin{array}{rcll}
f_1: Hom_+(\epsilon_-^1, \epsilon_-^2)& \to & Hom_+(\epsilon_+^1, \epsilon_+^2) & \vspace{.05in}\\\
y_-^{\vee} & \mapsto & y_+^{\vee}& \vspace{.05in}\\
c^{\vee} & \mapsto & \displaystyle{ \sum_{a \in \calA(\Lambda_+)} \textrm{Coeff}_c(f^{(2)}_{\epsilon_-}(a)) a^{\vee}}, & c\in\calA(\Lambda_-)\vspace{.05in}\\\
x_-^{\vee} & \mapsto & x_+^{\vee} + \displaystyle{ \sum_{a \in \calA(\Lambda_+)} \textrm{Coeff}_t(f^{(2)}_{\epsilon_-}(a)) a^{\vee}} .& \\
\end{array}
\end{equation}
When computing $\textrm{Coeff}_b(f^{(2)}_{\epsilon_-}(a))$, where $b$ is either a Reeb chord $c \in \calA(\Lambda_-)$ or $t\in \calT$,
one consider all the terms of  $f(a)$ including $b$.
If a term of $f(a)$ including $b$ can be written as  ${\bf p} b{\bf q}$, where $\bf p$ and $\bf q$ are words of pure Reeb chords of $\Lambda_-$,
this term contributes $\textrm{Coeff}_{{\bf p} b {\bf q}}(f(a)) \epsilon_-^1({\bf p}) \epsilon^2_-({\bf q})$ to 
$\textrm{Coeff}_b(f^{(2)}_{\epsilon_-}(a))$.
Therefore 
we have
$$\textrm{Coeff}_b(f^{(2)}_{\epsilon_-}(a))= \sum_{\bf p \  q} \textrm{Coeff}_{{\bf p} b {\bf q}}(f(a)) \epsilon_-^1({\bf p}) \epsilon^2_-({\bf q}).$$

\begin{rmk}
According to \cite[Proposition 3.29]{NRSSZ}, the condition for a DGA map to induce a unital $A_{\infty}$ category morphism is that the DGA map is compatible with the weak link gradings in the sense of \cite[Definition 3.19]{NRSSZ}. 
In our case where
 both $\Lambda_+$ and $\Lambda_-$ are single component Legendrian knots with a single base point,
this condition is trivially satisfied.
\end{rmk}

\vspace{0.3in}

\section{Floer theory for Lagrangian Cobordisms}\label{FT}
In this section, we  give a brief introduction to the Floer theory of a pair of exact Lagrangian cobordisms following \cite{CDGG}.
Let $\Sigma^i$, for $i= 1,2$, be exact Lagrangian cobordisms from $\Lambda^i_{-}$ to $\Lambda^i_{+}$  in 
$\big(\bbR \times \bbR^3, d(e^t \alpha)\big)$, where $ \alpha= dz-ydx.$
A schematic picture is shown in Figure \ref{paircob}.
The union of the cobordisms $\Sigma^1\cup \Sigma^2$ is cylindrical over 
$\Lambda^1_+ \cup \Lambda^2_+ $ (resp. $\Lambda^1_- \cup \Lambda^2_-$) on the positive end (resp. negative end).
If we view $\Sigma^1 \cup \Sigma^2$ as a Lagrangian cobordism from 
the Legendrian link $\Lambda_-^1 \cup \Lambda_-^2$ to the Legendrian link $\Lambda_+^1 \cup \Lambda_+^2$,
we obtain a chain complex generated by Reeb chords of $\Lambda_-^1 \cup \Lambda_-^2$ 
and $\Lambda_+^1 \cup \Lambda_+^2$.
On the other hand,
if we lift the exact Lagrangian cobordism $\Sigma^1\cup\Sigma^2$ to be a Legendrian manifold in $\bbR \times \bbR^3 \times \bbR$,
we have its Legendrian contact homology DGA,
which is generated by double points of $\Sigma^1\cup \Sigma^2$.
One can construct the Cthulhu chain complex $Cth(\Sigma^1, \Sigma^2)$ as a mix of the two chain complexes above.
It is generated by some Reeb chords on the cylindrical ends and intersection points of $\Sigma^1$ and $\Sigma^2$. 
Moreover, this chain complex has trivial cohomology,
i.e. $H^*Cth(\Sigma^1, \Sigma^2)=0$.

\begin{figure}[!ht]
\labellist
\small
{\color{blue}
\pinlabel $\Lambda^2_-$ at 120 100
\pinlabel $\Lambda^2_+$ at 120 300
\pinlabel $\Sigma^2$ at 100 200
}
{\color{red}
\pinlabel $\Lambda^1_-$ at 350 140
\pinlabel $\Lambda^1_+$ at 350 320
\pinlabel $\Sigma^1$ at 370 230
}
{\color{black}
\pinlabel $t$ at -5 380
\pinlabel $N$ at -5 260 
\pinlabel $-N$ at -15 60
}
\endlabellist
\includegraphics[width=3in]{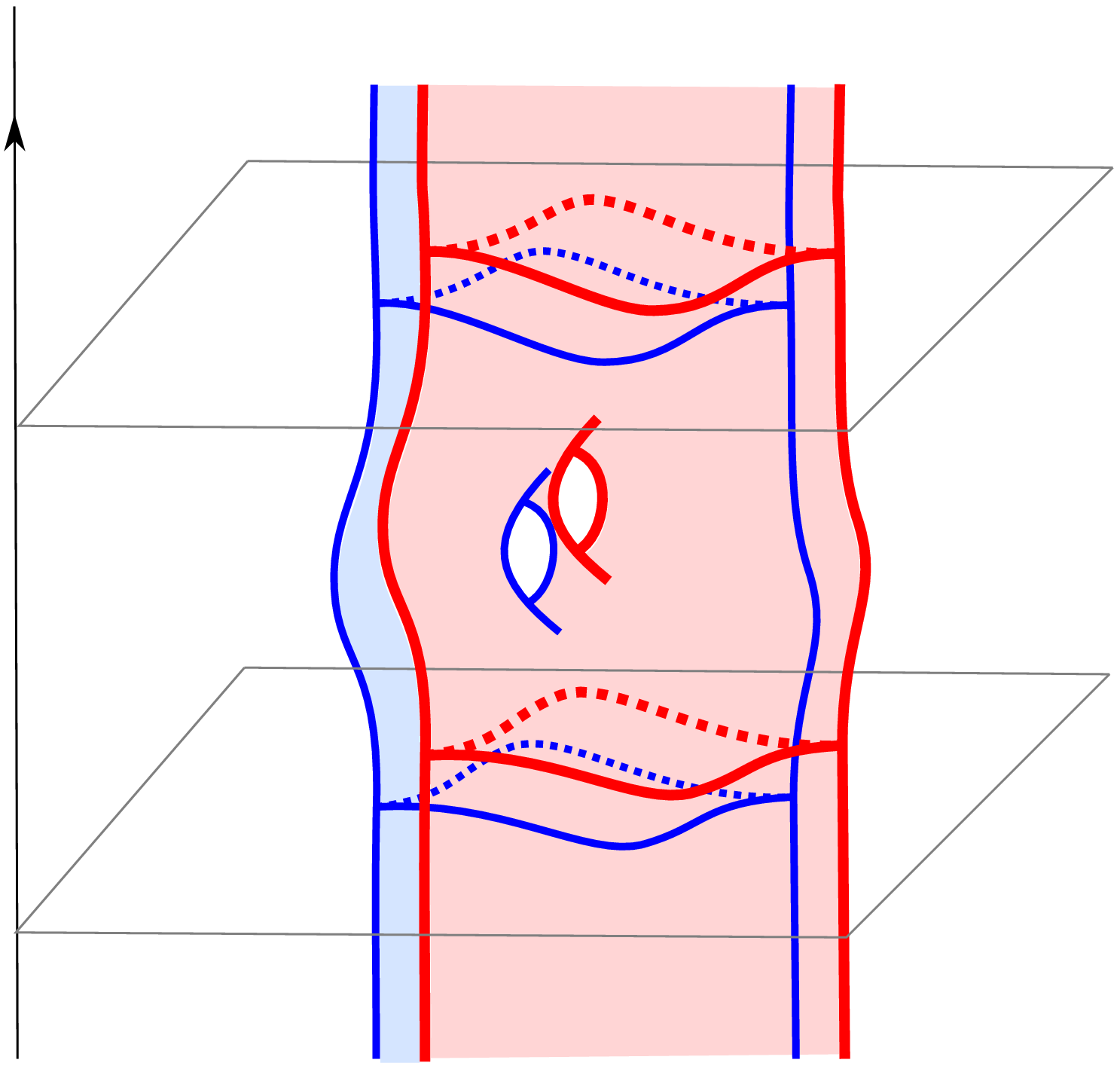}
\caption{Pair of Lagrangian cobordisms in $\big(\bbR\times \bbR^3, d(e^t \alpha)\big)$.}
\label{paircob}
\end{figure}

For the simplicity in defining grading, we  assume that $\Sigma^i$, for $i= 1,2$,  has trivial Maslov number throughout this paper.

\subsection{The graded vector space} \label{GVS}
Assume that both $\Sigma^1$ and $\Sigma^2$ are cylindrical outside of $[-N, N] \times \bbR^3$, where $N$ is a positive number.
The underlying vector space
is a direct sum of three parts: 
$$Cth(\Sigma^1, \Sigma^2) =C(\Lambda_+^1, \Lambda_+^2) \oplus CF(\Sigma^1, \Sigma^2) \oplus C(\Lambda_-^1, \Lambda_-^2). $$
The top level
$C(\Lambda_{+}^1, \Lambda_{+}^2)$  (resp. bottom level $C(\Lambda_{-}^1, \Lambda_{-}^2)$)  is an $\bbF$-module generated by Reeb chords to $\Lambda^1_{+}$ (resp. $\Lambda^1_{-}$) from $\Lambda^2_{+}$ (resp. $\Lambda^2_{-}$) that are lying on the slice of $t=N$ (resp. $t=-N$).  
The middle level
$CF(\Sigma^1, \Sigma^2)$ is an $\bbF$-module generated by intersection points of  $\Sigma^1$ and $\Sigma^2$, which are all contained in $(-N, N)\times \bbR^3$.

{\bf{Grading.}}
To define the degree, first fix a capping path $\gamma_c$ for each generator $c$. 
For a Reeb chord $a$ in $C(\Lambda_+^1, \Lambda_+^2)$ or $C(\Lambda_-^1, \Lambda_-^2)$, define the degree $$|a| = CZ(\gamma_a) -1,$$ 
which matches the definition of degree when viewing $a$ as a generator in the Legendrian contact homology DGA of $\Lambda_+^1 \cup \Lambda_+^2$ or $\Lambda_-^1 \cup \Lambda_-^2$.
For an intersection point $x \in CF(\Sigma^1, \Sigma^2)$, define the degree $$|x|= CZ(\gamma_x),$$ following \cite{Seidel}.
One can also see \cite[Section 4.2]{CDGG} for details. 
Note that for a Reeb chord in $C(\Lambda_+^1, \Lambda_+^2)$, its degree in $Cth(\Sigma^1, \Sigma^2)$ will not be necessarily coincide with its degree in $C(\Lambda_+^1, \Lambda_+^2)$.
It is shifted as we will see later.

{\bf{ Action.}}
For $i=1,2$, suppose $g_i$ is a  primitive of the exact Lagrangian cobordism $\Sigma^i$,
and
hence  $g_i$ is constant when $t<-N$ or $t>N$.
Note that primitive functions are  well defined up to a overall shift by a constant number.
Thus we may assume that the primitives $g_i$ are both zero on $\Sigma^i \cup \big((-\infty, -N) \times \bbR^3\big)$ for $i=1,2$.
The action of generators is defined under this choice of primitives.

For Reeb chords $a^{+} \in C(\Lambda_{+}^1,\Lambda_{+}^2 )$  and $a^{-} \in  C(\Lambda_{-}^1,\Lambda_{-}^2 )$, 
define the {\bf action}  $\fraka$ by 
$$\fraka(a^+)=g_2(a^+)-g_1(a^+)+ \displaystyle{\int_{a^+} e^N\alpha}$$ and
$$\fraka(a^-)=g_2(a^-)-g_1(a^-)+ \displaystyle{\int_{a^-} e^{-N}\alpha}=\displaystyle{\int_{a^-} e^{-N}\alpha} .$$
The last part is due to the special choice of primitives.
For double points $x$ of $\Sigma^1  \cup  \Sigma^2$, the action $\fraka(x)$ is defined by $\fraka(x)= g_2(x)-g_1(x)$.

\subsection{The  differential}\label{diff}

\begin{rmk}\label{Assumption}
Throughout this paper, we restrict ourselves to the case where all intersection generators have positive actions since that is the case
 for the special pair of cobordisms constructed in Section \ref{Pair}.
In general, the differential could include one more map from $CF(\Sigma^1, \Sigma^2)$ to $C(\Lambda_-^1, \Lambda_-^2)$, which is called the Nessie map.
However, by \cite[Proposition 9.1]{CDGG}, the positive energy condition of the holomorphic disks counted by the Nessie map requires the corresponding intersections in $CF(\Sigma^1, \Sigma^2)$ to have negative actions.
Therefore, in our special case, we can exclude the Nessie map and get the differential as a upper triangle as below.

\end{rmk}

	With the assumption in Remark \ref{Assumption},
	we define the differential under the decomposition 
	$$Cth(\Sigma^1, \Sigma^2) =C(\Lambda_+^1, \Lambda_+^2) \oplus CF(\Sigma^1, \Sigma^2) \oplus C(\Lambda_-^1, \Lambda_-^2) $$
by a degree $1$ map of the form
	$$d= 
	\begin{pmatrix}
	d_{++}& d_{+0} & d_{+-}\\
	0 & d_{00} & d_{0-}\\
	0 & 0 & d_{--}\\
	\end{pmatrix}.
	$$	

To describe the differential explicitly, we need to study the  holomorphic disks with boundary on $\Sigma^1 \cup \Sigma^2$.
Fix a generic domain dependent almost complex structure $J$ that is compatible with the symplectic form on $\bbR \times \bbR^3$ and the cylindrical ends in the sense of \cite[Section 3.1.5]{CDGG}.
Suppose that the induced cylindrical almost complex structure on the positive end $(\Sigma^1 \cup \Sigma^2)\cap \big([N, \infty) \times \bbR^3\big)$  (resp. the negative end  $(\Sigma^1 \cup \Sigma^2)\cap \big(( -\infty,-N] \times \bbR^3)$\big) is $J_+$ (resp. $J_-$).
The differential $d_{\pm\pm}$ of $C(\Lambda_{\pm}^1, \Lambda_{\pm}^2)$  counts  rigid $J_{\pm}$-holomorphic disks
with boundary on $\bbR \times( \Lambda^1_{\pm}\cup \Lambda^2_{\pm})$, respectively, as described in Section \ref{DGA}.
The corresponding moduli space is denoted by $\calM_{J_{\pm}}(a^{\pm}; {\bf p}^{\pm}, b^{\pm}, {\bf q^{\pm}})$, 
where $a^{\pm}$ and $b^{\pm}$ are Reeb chords to $\Lambda^1_{\pm}$ from $\Lambda^2_{\pm}$  while  ${\bf p}^{\pm}$ and ${\bf q}^{\pm}$  are words of pure Reeb chords of $\Lambda^1_{\pm}$ and $\Lambda^2_{\pm}$, respectively.
We also write $\widetilde{\calM}_{J_{\pm}}(a^{\pm}; {\bf p}^{\pm}, b^{\pm}, {\bf q^{\pm}})$ as the moduli space $\calM_{J_{\pm}}(a^{\pm}; {\bf p}^{\pm}, b^{\pm}, {\bf q^{\pm}})$ module the action of $\bbR$ in the $t$ direction.

\begin{figure}[!ht]
    \labellist
    \pinlabel $a^+$ at 70 325
    \pinlabel $a^-$ at 65 184
    \pinlabel $p_1^-$ at 20 184
    \pinlabel $q_1^-$ at 100 184
    \pinlabel $q_2^-$ at 145 184
    \pinlabel $u\in\calM(a^+;{\bf p^-},a^-,{\bf q^-})$ at 75 166
    {\color{red}
    \pinlabel $\Sigma_1$ at  15 260
     \pinlabel $\Sigma_1$ at  245 260
     \pinlabel $\Sigma_1$ at  15 80
     \pinlabel $\Sigma_1$ at  250 80
     }
     {\color{blue}
    \pinlabel $\Sigma_2$ at  135 260
      \pinlabel $\Sigma_2$ at  370 260
       \pinlabel $\Sigma_2$ at  150 80  
           \pinlabel $\Sigma_2$ at  370 80
           }  
    \pinlabel $a^+$ at 300 325
    \pinlabel $x$ at 295 205
    \pinlabel $p_1^-$ at 250  184
    \pinlabel $q_1^-$ at 340 184
    \pinlabel $q_2^-$ at 380 184
        \pinlabel $u\in\calM(a^+;{\bf p^-},x,{\bf q^-})$ at 320 166

    \pinlabel $x_1$ at 70 135
    \pinlabel $x_2$ at 70 20
    \pinlabel $p_1^-$ at 20 2
    \pinlabel $q_1^-$ at 110 2
    \pinlabel $q_2^-$ at 145 2
    \pinlabel $u\in\calM(x_1;{\bf p^-},x_2,{\bf q^-})$ at 75 -12

    \pinlabel $x$ at 295 135
    \pinlabel $a^-$ at 300 2
    \pinlabel $p_1^-$ at 250  2
    \pinlabel $q_1^-$ at 340 2
    \pinlabel $q_2^-$ at 380 2
    \pinlabel $u\in\calM(x;{\bf p^-},a^-,{\bf q^-})$ at 320 -12

    \endlabellist
	\includegraphics[width=5in]{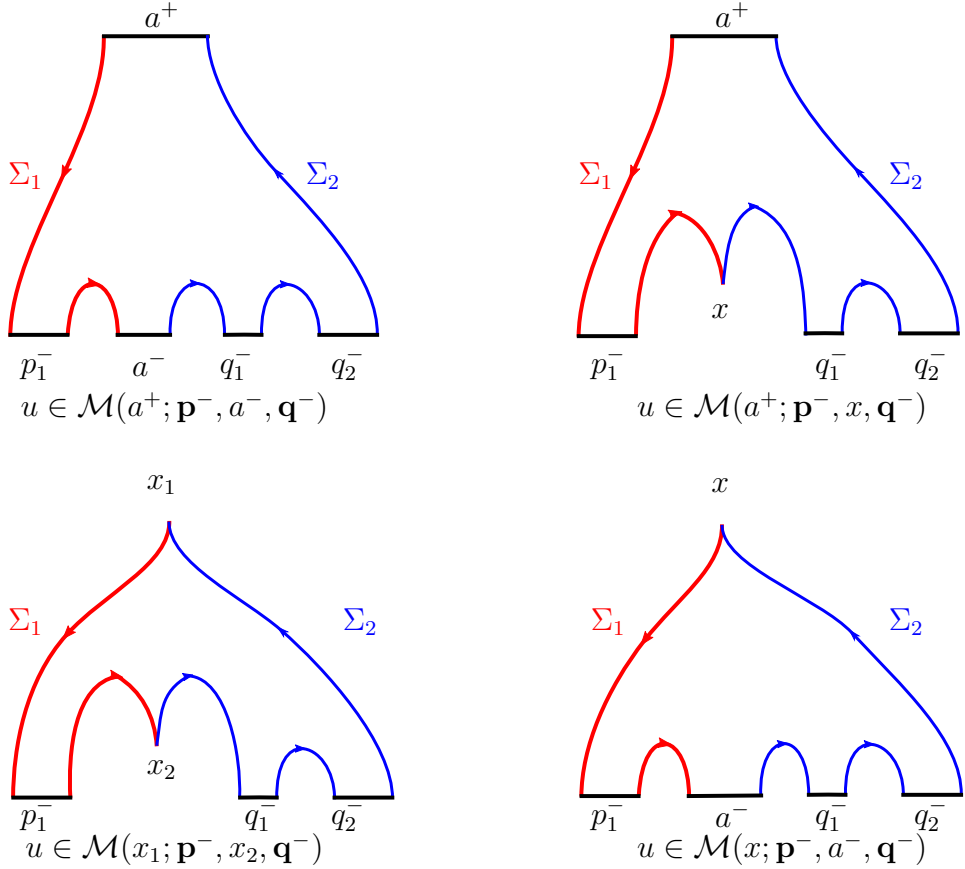}
	\vspace{0.3in}

	\caption{A sketch of the $J$-holomorphic disks in the differential $d$.
	Here $a^{\pm}$ are  Reeb chords to $\Lambda^1_{\pm}$ from $\Lambda^2_{\pm}$, respectively,
and $x$, $x_1$, $x_2$ are double points of $\Sigma^1  \cup  \Sigma^2$.  
	In these examples, $\bf p^-$ is a word of one pure Reeb chord $p^-_1$ of $\Lambda_-^1$ 
	while $\bf q^-$ is a word of two pure Reeb chords $q_1^-q^-_2$ of $\Lambda_-^2$.
	The arrows denote the orientation inherited from the boundary of the unit disk.}	
	\label{Moduli}
\end{figure}

For the remaining maps in the differential, we need to describe  $J$-holomorphic disks with boundary on $\Sigma^1\cup \Sigma^2$, where $J$ is the chosen domain dependent almost complex structure.
The punctures of these $J$-holomorphic disks can be either Reeb chords or  intersection points.
For generators $c_0, c_1,\dots, c_m$ in $Cth(\Sigma^1, \Sigma^2)$,
let $\calM_{J}(c_0; c_1,\dots, c_m)$ denote the moduli space of the $J$-holomorphic disks:
$$u: (D_{m+1}, \partial D_{m+1}) \to (\bbR\times \bbR^3, \Sigma^1 \cup \Sigma^2)$$
with the following properties:
\begin{itemize}
\item $D_{m+1}$ is a $2$-dimensional unit disk with $m+1$ boundary points $q_0 , q_1, \dots , q_m$ removed and
the points $q_0, q_1, \dots , q_m$ are arranged in a counterclockwise order. 
 \item If $c_0$ is a Reeb chord, the image of $u$ is asymptotic to $[N, \infty) \times c_0$ near $q_0$.
   If $c_0$ is an intersection point, then $\displaystyle{\lim_{z \to q_0} u(z)=c_0}$
and $u$ maps the incoming segment (resp. outgoing segment) of the boundary to $\Sigma^2$ (resp. $\Sigma^1$).
 \item If, for $i >0$, $c_i$ is a Reeb chord, the image of $u$ is asymptotic to $( -\infty, -N] \times c_i$ near $q_i$.
 If $c_i$ is an intersection point, then $\displaystyle{\lim_{z \to q_i} u(z)=c_i}$
and $u$ maps the incoming segment (resp. outgoing segment) of the boundary to $\Sigma^1$ (resp. $\Sigma^2$).
\end{itemize}

The four types of Moduli spaces used in the differential $d$ of the Cthulhu chain complex are shown in Figure \ref{Moduli}.
For any one of these four moduli spaces $\calM$, 
we say a disk $u\in \calM$ is  {\bf rigid} if $\dim \calM =0$.
All the moduli spaces of holomorphic disks introduced above 
admit a coherent orientation since both $\Sigma^1$ and $\Sigma^2$ are spin.
Therefore, one can associate each rigid holomorphic disk $u \in \calM$  with a sign
and thus can count the number of rigid holomorphic disks in $\calM$ with sign.

Let $a^{\pm}_i$'s be Reeb chords to $\Lambda^1_{\pm}$ from $\Lambda^2_{\pm}$
and $x_i$'s be double points of $\Sigma^1  \cup  \Sigma^2$. 
The bold letters $\bf p^{\pm}$, ${\bf q}^{\pm}$ are words of pure Reeb chords of $\Lambda^1_{\pm}$ and $\Lambda^2_{\pm}$,  respectively. 
Here we assume that, for $i=1,2$,
 $\epsilon^{i}_{-}$ is an augmentation of $\calA(\Lambda^i_-)$ and $\epsilon^i_+$ is the augmentation of $\calA(\Lambda^i_+)$ induced by $\Sigma_i$. 
The differential is defined as follows:

	$$
	\begin{array}{lll}
	\displaystyle{d_{++}(a^{+}_i)} &=&\displaystyle{ \sum_{\dim \widetilde{\calM}_{J_{+}}(a^+_j; {\bf p}^+, a^+_i, {\bf q}^+) =0} |\widetilde{\calM}_{J_{+}}(a^+_j; {\bf p}^+, a^+_i, {\bf q}^+)| \epsilon^1_+({\bf p}^+) \epsilon^2_+({\bf q}^+) a_j^+};\\

	\displaystyle{d_{--}(a^{-}_i)} &=& \displaystyle{\sum_{\dim \widetilde{\calM}_{J_-}(a^{-}_j;{\bf p}^{-}, a^{-}_i, {\bf q^{-}}) =0} |\widetilde{\calM}_{J_-}(a^{-}_j;{\bf p}^{-}, a^{-}_i, {\bf q^{-}})| \epsilon^1_-({\bf p}^{-}) \epsilon^2_-( {\bf q^{-}}) a_j^-};\\
	\\

	         \displaystyle{d_{00} (x_i)} &=&\displaystyle{ \sum_{\dim \calM_{J}(x_j; {\bf p}^-,  x_i, {\bf q}^-) =0} |\calM_{J}(x_j; {\bf p}^-,  x_i, {\bf q}^-) | \epsilon^1_-({\bf p}^{-}) \epsilon^2_-( {\bf q^{-}}) x_j} ;\\
         \\
         \displaystyle{d_{0-}(a^-_i)} &=& \displaystyle{\sum_{\dim \calM_{J}(x_j; {\bf p}^-, a^-_i, {\bf q^-}) =0} |\calM_{J}(x_j; {\bf p}^-, a^-_i, {\bf q^-})| \epsilon^1_-({\bf p}^{-}) \epsilon^2_-( {\bf q^{-}})  x_j} ;\\
         \\
         \displaystyle{d_{+0}(x_i)} &=& \displaystyle{\sum_{\dim \calM_{J}(a^+_j;  {\bf p^-}, x_i, {\bf q^-}) =0} |\calM_{J}(a^+_j; {\bf p}^-, x_i, {\bf q^-})| \epsilon^1_-({\bf p}^{-}) \epsilon^2_-( {\bf q^{-}}) a^+_j} ;\\
         \\
         \displaystyle{d_{+-}(a^-_i)} &=&\displaystyle{ \sum_{\dim \calM_{J}(a^+_j;  {\bf p^-}, a^-_i, {\bf q^-}) =0} |\calM_{J}(a^+_j;  {\bf p^-}, a^-_i, {\bf q^-})| \epsilon^1_-({\bf p}^{-}) \epsilon^2_-( {\bf q^{-}})  a^+_j} ,\\
         \\
         \end{array}
         $$
where  $|\calM|$ denotes the number of rigid holomorphic disks in the moduli space $\calM$ counted with sign. 
Note that the definition of differential depends on the choice of augmentations $\epsilon_1^-$ and $\epsilon_2^-$, whose existence  are essential to the Floer theory.

A holomorphic disk counted by the differential must satisfy the  rigidity condition and  the positive energy condition.
We will describe these conditions in details.
	
{\bf The Rigidity Condition.} Let us interpret the condition $\dim \calM_J(c_1; {\bf p}, c_2, {\bf q})=0$ in terms of $|c_i|$ for $i=1,2$,
where $c_i$ can be either a Reeb chord or an intersection point
while $\bf p$ and $\bf q$ are words of pure Reeb chords in degree $0$.
Instead of deriving a formula for the dimension of a moduli space, we use the idea of the wrapped Floer homology to find the relation between $|c_1|$ and $|c_2|$.	
Recall that  both $\Sigma^1$ and $\Sigma^2$ are cylindrical outside of $[-N, N]\times \bbR^3$. 
Consider a non-decreasing function $\sigma(t): \bbR_{\ge 0} \to \bbR_{\ge 0}$  
such that $\sigma'(t)=0$ when $t \le N$ and $\sigma'(t)=1$ when $t \ge N'$, 
where $N'$ is a number bigger than $N$.  
Note that $X_H = - \sigma(|t|) \partial_z$  is a Hamiltonian vector field with its time-$s$ flow denoted by $\Phi_H^s$.	
Flow $\Sigma^1$ through $X_H$ and get a  new cobordism $\Phi_H^s(\Sigma^1)$, which is another exact Lagrangian cobordism according to Section \ref{Pair}. 
Observe that  $\Phi_H^s(\Sigma^1)$ wraps $\Sigma^1$ on both ends in the negative Reed chord direction.
Hence for a large enough number $s$, each Reeb chord $c$ to $\Lambda^1_{+}$ (resp. $\Lambda^1_{-}$) from $\Lambda^2_{+}$ (resp. $\Lambda^2_{-}$)  corresponds to a transversally double point $\check{c}$  of  $  \Phi_H^s(\Sigma^1)  \cup \Sigma^2$ in $N< t < N'$ 
(resp. $-N'< t < -N$).
Moreover,
if $c$ is a Reeb chord in $C(\Lambda^1_{+}, \Lambda^2_{+})$, we have
$$|\check{c}|= CZ(\gamma_{\check{c}})= CZ(\gamma_{c})+1=|c|+2.$$
If $c$ is Reeb chord  in $C(\Lambda^1_{-},  \Lambda^2_{-})$, 
$$|\check{c}|= CZ(\gamma_{\check{c}})= CZ(\gamma_{c})=|c|+1.$$
Each double point $x$ of $\Sigma^1 \cup \Sigma^2$  naturally corresponds to a double point
$\check{x}$
of $ \Phi_H^s(\Sigma^1)  \cup \Sigma^2$ in $-N < t < N$
with gradings satisfying $|\check{x}|=|x|$.

\begin{rmk}
The difference in grading correspondence  between Reeb chords in $C(\Lambda^1_{+}, \Lambda^2_{+})$ and Reeb chords in $C(\Lambda^1_{-}, \Lambda^2_{-})$ can be understood better in a special case where $\Sigma^1$ is a push off of $\Sigma^2$ through a positive Morse function $F: \Sigma^2 \to \bbR_{> 0}$.
In other words, in a Weinstein neighborhood of $\Sigma^2$, the cobordism $\Sigma^1$ is the graph of $dF$ for some positive Morse function $F: \Sigma^2 \to \bbR_{> 0}$.
In this case,
 the cobordism
$\Phi_h^s(\Sigma^1)$ is a push off of $\Sigma^2$ through another Morse function $\tilde{F}$ as well.
By the canonical Floer theory \cite{Floer},
we can choose a family of  capping paths so that $CZ(\gamma_x)=Ind_{\tilde{F}}(x)$ for any intersection point $x$ of $\Phi_h^s(\Sigma^1)$ and $\Sigma^2$.
Similarly, 
for any Morse Reeb chord $c$ in $\Lambda^1_+\cup \Lambda^2_+$, we can further require that $Ind_{f_+}(c)= CZ(\gamma_c)$, where $f_{+}=F\big\vert_{\Lambda^2_+}$.
Notice that
 $Ind_{\tilde{F}}(\check{c})= Ind_{f_+}(c)+1$.
Therefore
$$|\check{c}|= CZ(\gamma_{\check{c}})= Ind_{\tilde{F}}(\check{c})= Ind_{f_+}(c)+1= CZ(\gamma_{c})+1=|c|+2.$$
For a Morse Reeb chord $c$  in $\Lambda^1_-\cup \Lambda^2_-$, 
the indices satisfy $Ind_{\tilde{F}}(\check{c})= Ind_{f_-}(c)$, where $f_{-}=F\big\vert_{\Lambda^2_-}$.
Hence $|\check{c}|= CZ(\gamma_{\check{c}})= Ind_{\tilde{F}}(\check{c})= Ind_{f_-}(c)= CZ(\gamma_{c}) =|c|+1.$
A schematic figure is shown in Figure \ref{wrapind}.

\begin{figure}[!ht]
\labellist
\small
{\color{red}
\pinlabel $\Sigma_1$ at -10 85
\pinlabel $\Phi_h^s(\Sigma_1)$ at 165 30
\pinlabel $\Sigma_1$ at -10  175
\pinlabel $\Phi_h^s(\Sigma_1)$ at 170 190
}
{\color{blue}
\pinlabel $\Sigma_2$ at -10 55
\pinlabel $\Sigma_2$ at -10 145
\pinlabel $\Sigma_2$ at 170 55
\pinlabel $\Sigma_2$ at 175 145
}
\pinlabel $a^+$ at 100 65
\pinlabel $a^-$ at 40 65
\pinlabel $a^+$ at 280 65
\pinlabel $a^-$ at 232 65
\pinlabel $\check{a}^+$ at  325 65
\pinlabel $\check{a}^-$ at 195 65
\pinlabel $t$ at 100 15
\pinlabel $t$ at 285 15

\endlabellist
\includegraphics[width=3.9in]{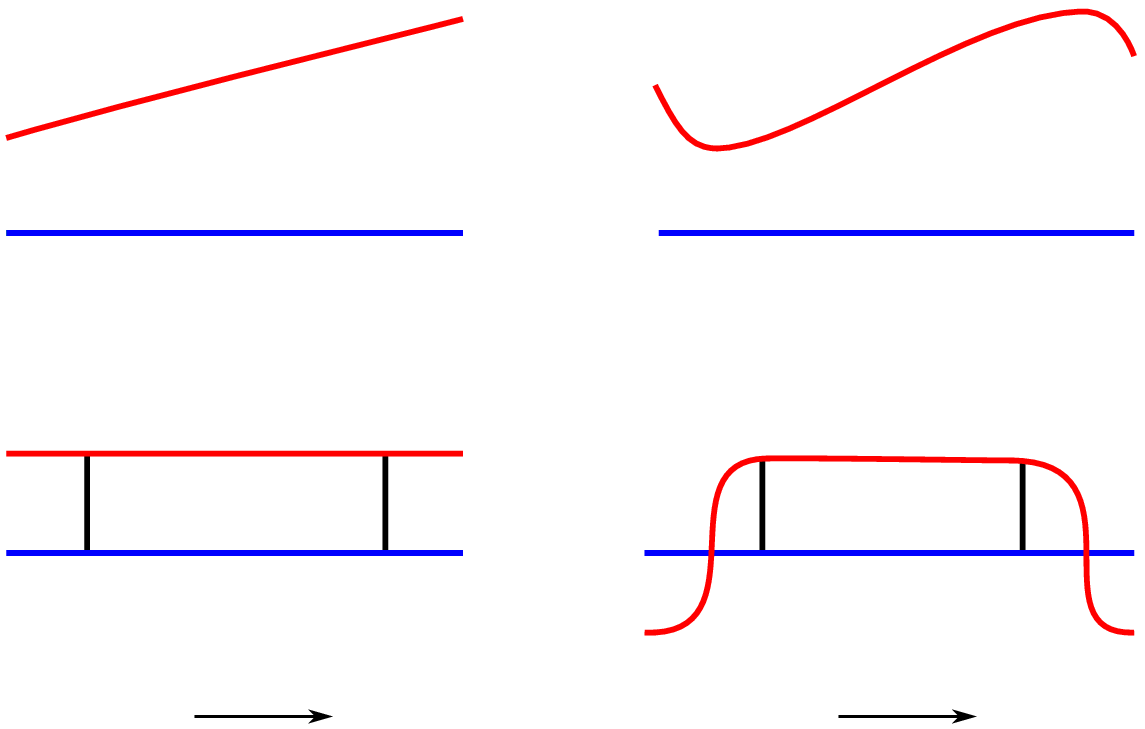}
\caption{The top two are front projections while the bottom two are Lagrangian projections.
The indices satisfy
$|\check{a}^+| =|a^+|+2$ and $|\check{a}^-|=|a^-|+1.$}
\label{wrapind}
\end{figure}
\end{rmk}

So far, we have shown that  generators of $Cth(\Sigma^1, \Sigma^2)$ can be identified  with  intersection  points of $\Phi_H^s(\Sigma^1)$ and $\Sigma^2$, 
which are generators of $Cth(\Phi_H^s(\Sigma^1), \Sigma^2)$.
Moreover, by \cite[Proposition 8.2]{CDGG}, the Cthulhu chain complexes $Cth(\Sigma^1, \Sigma^2)$  and 
$Cth(\Phi_H^s(\Sigma^1), \Sigma^2)$ are identified on the level of complexes as well.
Note that the generators of $Cth(\Phi_H^s(\Sigma^1), \Sigma^2)$ do not contain any Reeb chords and hence we have $Cth(\Phi_H^s(\Sigma^1), \Sigma^2) = \big(CF(\Phi_H^s(\Sigma^1), \Sigma^2), d_{00}\big)$.
Lift $\Phi_H^s(\Sigma^1)\cup \Sigma^2$ to be a Legendrian submanifold $L$ in $\bbR\times \bbR^3\times \bbR$.
Note that $\big(CF(\Phi_H^s(\Sigma^1), \Sigma^2), d_{00}\big)$ is the dual of the linearized contact homology chain complex of $L$ as introduced in Section \ref{DGA}.
Hence if $\dim \calM(c_1; {\bf p}, c_2, {\bf q})=0$, 
there are rigid holomorphic disks that have a positive puncture at $\check{c}_1$ and
a negative puncture at $\check{c}_2$, which implies  $|\check{c}_1|-|\check{c}_2|=1$.
We can get the corresponding grading relation between $c_1$ and $c_2$.
In particular, let $a^{\pm}$ be Reeb chords in $C(\Lambda^1_{\pm}, \Lambda^2_{\pm})$ and $x_1$,$x_2$ be intersection points of $\Sigma^1$ and $\Sigma^2$ while ${\bf p}$ and ${\bf q}$ are words of pure Reeb chords in degree $0$ of $\Lambda_-^1$ and $\Lambda_-^2$, respectively.
 We have that
\begin{itemize}
\item if $\dim \calM_{J}(x_1; {\bf p},  x_2,  {\bf q})=0$, then $|c_1|-|c_2|=1$;
\item if $\dim \calM_{J}(x_1; {\bf p},  a^- , {\bf q})=0$, then $|c_1|-|a^- |=2$;
\item if $\dim \calM_{J}(a^+; {\bf p},  x_1,  {\bf q})=0$, then $|a^+|-|x_1 |=-1$;
\item if $\dim \calM_{J}(a^+; {\bf p},  a^-,  {\bf q})=0$, then $|a^+|-|a^- |=0$.
\end{itemize}

As a result, the Cthulhu chain complex can be written as 
$$Cth^{k}(\Sigma^1, \Sigma^2) =C^{k-2}(\Lambda_+^1, \Lambda_+^2) \oplus CF^{k}(\Sigma^1, \Sigma^2) \oplus C^{k-1}(\Lambda_-^1, \Lambda_-^2). $$
Under this decomposition, the differential	
	$$d= 
	\begin{pmatrix}
	d_{++}& d_{+0} & d_{+-}\\
	0 & d_{00} & d_{0-}\\
	0 & 0 & d_{--}\\
	\end{pmatrix}
	$$
has degree $1$ as we expected.

{\bf The Positive Energy Condition.} We interpret the positive energy condition of a holomorphic disk 
$u \in \calM(c_0; c_1,\dots, c_m)$ in terms of the action of $c_i$, where $i=0,\dots,m$.
Following \cite{EkhSFTZ2}, we define the energy $E(u)$ of a holomorphic disk $$u: (D^2, \partial D^2) \to( \bbR \times \bbR^3, \Sigma^1  \cup  \Sigma^2),$$  by
	$E(u)=E_{\omega}(u)+E_{\alpha}(u)$,
	where the $\omega$-energy 
	$$E_{\omega}(u) = \displaystyle{\int_{u^{-1}([-N,N] \times \bbR^3)} u^* (\omega)+\int_{u^{-1}((-\infty,-N) \times \bbR^3)} u^* (e^{-N}d\alpha) + \int_{u^{-1}((N, \infty) \times \bbR^3)} u^* (e^{N}d\alpha)}.$$
	Note that
	Write $u=(t,v)$, where $t:D^2 \to \bbR$ and $v: D^2 \to \bbR^3$.
	Define the $\alpha$-energy  by
	$$\displaystyle{E_{\alpha}(u) = \sup_{\phi_-}\ \left( \int_{u^{-1}((-\infty,-N) \times \bbR^3)} \ \phi_-(t)\  d t \wedge (v^* \alpha)\right) 
	+ \sup_{\phi_+}\ \left( \int_{u^{-1}((N, \infty) \times \bbR^3)} \ \phi_+(t)\  d t \wedge (v^* \alpha)\right)},$$
	where $\phi_+$ and $\phi_-$ range over all  compact supported smooth functions such that
	$$\displaystyle{\int_{-\infty}^{-N} \phi_-(t) \ dt=e^{-N}}  \textrm{  and  } 
	\displaystyle{\int_{N}^{\infty}\phi_+(t) \ dt=e^N},$$
	respectively.
By Stokes' Theorem, for any holomorphic disk $u \in \calM(c_0; c_1,\dots, c_m)$, 
we have
$$E_{\omega}(u) = \fraka(c_0) - \displaystyle{\sum_{l=1}^m}\fraka(c_l).$$
A holomorphic disk has positive $\omega$-energy, i.e. $E_{\omega}(u)>0$, which implies
that
$\fraka(c_0) > \displaystyle{\sum_{l=1}^m}\fraka(c_l)$.

Under the assumption in Remark \ref{Assumption}, the differential of the Cthulhu chain complex $d$ is of the form of an upper triangle.
By \cite[Section 6]{CDGG}, we have  $d^2=0$ and thus $d$ is a differential map.
Moreover, from \cite[Section 8]{CDGG}, the induced cohomology 
$H^*Cth(\Sigma^1, \Sigma^2)$ is an invariant under compactly supported Hamiltonian isotopies.
Push $\Sigma^1$ along the negative $z$ direction until $\Sigma^1$ is far below $\Sigma^2$
and then, there is no Reeb chord to $\Sigma^1$ from $\Sigma^2$ nor intersection point between $\Sigma^1$ and $\Sigma^2$.
It is obvious that  the cohomology is trivial, i.e. $H^*Cth(\Sigma^1, \Sigma^2)=0$.

\vspace{0.3in}

\section{Main Result}

In Section \ref{Pair}, we perturb an exact Lagrangian cobordism using a Morse function and obtain a pair of Lagrangian cobordisms. 
 In Section \ref{LES}, we apply the Floer theory to this pair of cobordisms and get the long exact sequence in Theorem \ref{thm1}.
In Section \ref{geointer},
we describe the rigid holomorphic disks counted by $d_{+-}$, which is a part of the differential map of the Cthulhu chain complex,
 in terms of holomorphic disks with boundary on $\Sigma$ and Morse flow lines.
This is useful when identifying $f_1$, i.e. the category map on the level of morphisms,  with $d_{+-}$.
In Section \ref{aside}, we extend the method in Section \ref{geointer} to describe the differential $d$ of the Cthulhu chain complex  and recover the long exact sequences in \cite{CDGG}.
Finally, we use the identification  in Section \ref{geointer} between $f_1$ and $d_{+-}$ to prove Theorem \ref{thm5} in  Section \ref{injection}.

\subsection{Construction of the pair of cobordisms}\label{Pair}

First let us describe the neighborhood of a Lagrangian cobordism.
Let $\Sigma$ be an exact Lagrangian cobordism from $\Lambda_-$ to $\Lambda_+$ in $\big(\bbR \times \bbR^3, d(e^t \alpha)\big)$, where $\Lambda_-$ and $\Lambda_+$ are Legendrian links.
By the Weinstein Lagrangian neighborhood theorem, there is a symplectomorphism
$$\psi: \mbox{nbhd}(\Sigma) \subset \big( (\bbR \times \bbR^3), d(e^t \alpha)\big) \to \big(T^*\Sigma, d\theta \big),$$
where $\theta$ is the negative Liouville form $\theta = - \displaystyle{\sum p_i dq_i }$
 of $T^*\Sigma$
with coordinates $((q_1,q_2), (p_1,p_2))$.
Specifically,
on the $(\pm \infty-)$boundary $\bbR\times \Lambda_{\pm}$, the symplectomorphism $\psi$ is given by a composition of two symplectomorphisms $\psi_1 \circ \psi_0$.
As mentioned before, there is a contactomorphism from a tubular neighborhood of $\Lambda_{\pm}$ in $\bbR^3$
to a neighborhood of the zero section of $J^1(\Lambda_{\pm})$. 
Composing with the identity map on $\bbR_t$, we get a symplectomorphism 
$\psi_0$ from the neighborhood of $\bbR \times \Lambda_{\pm}$ in $\bbR \times \bbR^3$ to 
$\bbR\times J^1(\Lambda_{\pm})$.
The second part $\psi_1$ is given by
$$
\begin{array}{rl}
\psi_1: \mbox{nbhd}(\Sigma) \subset \big( (\bbR \times J^1(\Lambda_{\pm})), d(e^t \alpha)\big) &\to \big(T^*(\bbR_{> 0} \times \Lambda_{\pm}), d\theta \big)\vspace{0.1in}\\
(t, (q,p, z)) &\mapsto \big((e^t, q),(z, e^t p)\big)
\end{array}
$$

For a Morse function $F: \Sigma \to \bbR_{\ge 0}$ such that the determinant of the Hessian matrix is small enough, the graph of $dF$ is a  Lagrangian submanifold in $T^*\Sigma$. 
Pull it back to $\bbR \times \bbR^3$ and denote $\psi^{-1}(\mbox{graph}(dF))$ by $\Sigma'$.

Now we show that $\Sigma'$ is an exact Lagrangian submanifold as well.
Notice that $V_{(\bf{q},\bf{p})}:= dF|_{\bf{q}}$ is a Hamiltonian vector field in $T^*\Sigma$ since $\iota_{V} d\theta =- d\tilde{F}$,
where $\tilde{F}= F\circ \pi$  and $\pi$ is the natural projection $\pi: T^*(\Sigma) \to \Sigma$.
In order to extend $\psi^{-1}_{*}(V)$ to be a Hamiltonian vector field in $\bbR \times \bbR^3$,
we choose a smoothly cut off function $\gamma: T^*(\Sigma) \to \bbR$ such that 
$\gamma({\bf q}, {\bf p})=1$ in a tubular neighborhood of the zero section containing the graph of $dF$
and $\gamma({\bf q}, {\bf p})=0$ outside of a slightly bigger tubular neighborhood of the zero section.\vspace{0.01in}
Pull the Hamiltonian vector field of $\gamma \cdot \tilde{F}$ back through $\psi$ and  extend to a  Hamiltonian vector field $X_H$ in $\bbR\times \bbR^3$.
For a suitable neighborhood of $\Sigma$ in $\bbR \times \bbR^3$, we have
$$\displaystyle{\iota_{X_H} d(e^t \alpha) \bigg|_{nbhd(\Sigma)} = \psi^*(\iota_{V} d\theta) = \psi^*(-d\tilde{F})= d(-\tilde{F}\circ \psi)\bigg|_{nbhd(\Sigma)}}.$$
Hence its Hamiltonian $H= -\tilde{F} \circ \psi$ around $\Sigma$.
Denote the time $s$ flow of $X_H$ by $\phi_H^s$ and thus
$\Sigma' = \phi_H^1(\Sigma)$.
We can compute the $1$-form on $\Sigma'$:
\begin{equation}\label{primitive}
\begin{array}{rl}
\displaystyle{{\phi_H^1}^* e^t\alpha} &= \displaystyle{e^t\alpha + \int_0^1   \frac{d}{ds}\ {\phi_H^s}^*( e^t\alpha)\ ds}\\
&\\

&= \displaystyle{e^t\alpha + \int_0^1  { \phi_H^{s}}^*(\iota_{X_H} d (e^t\alpha) + d(\iota_{X_H} e^t\alpha))\ ds}\\
&\\

&=\displaystyle{e^t\alpha+ \int_0^1  { \phi_H^{s}}^*(dH + d(e^t\alpha(X_H))) \ ds}\\
&\\
&=  \displaystyle{ e^t\alpha + d(\int_0^1  (H + e^t\alpha(X_H)) \circ \phi_H^{s} \ ds)}.\\
\end{array}
\end{equation}
Thus $\Sigma'$ is exact.
Moreover,
if $\Sigma$ has  a primitive $g$,
then  $\Sigma'$ has a primitive  $$g+\displaystyle{\int_0^1  (H + e^t\alpha(X_H)) \circ \phi_H^{s} \ ds}.$$

We are going to construct a  particular Morse function for $\Sigma$ such that the image of the Morse function has cylindrical ends as well and thus is an exact Lagrangian cobordism.
Suppose $\Sigma$ is cylindrical outside of $[-N+\delta, N-\delta]\times \bbR^3$, where $0<\delta<1$.
Choose Morse functions $g_{\pm}: \Lambda_{\pm} \to \displaystyle{ (0,{1}/{2})}$ and $G: \Sigma  \cap  \big( [-N, N]\times \bbR^3\big)\to (0,1)$ such that 
$$G \big\rvert_{ \Sigma \cap\{ t \in [-N, -N+\delta] \cup [N-\delta,N]\}} = e^t.$$
Define a smooth non-decreasing function $\rho: \  \bbR_{> 0} \to [0,1]$ such that $\rho(s)=0$ for $s\le 1$ and $\rho(s) =1$ for $s \ge e^{\delta}$.

For $0< \eta  < e^{-2}$,  define a Morse function $F^{\eta} : \Sigma \to \bbR_{> 0}$ to be 
$$
F^{\eta}(t, q):=  
\begin{cases}
\eta^{2N} g_-(q) s, & \mathrm{if }\   t < -N;\\
(\rho(e^N s)(\eta^{N} - \eta^{2N} g_-(q)) + \eta^{2N} g_-(q))s, & \mathrm{if }\  -N \le t \le -N+\delta;\\
\eta^{N} G, & \mathrm{if} \ -N+\delta < t < N-\delta;\\
(\eta^{N} + \rho(e^{-N+\delta} s) \eta^{N} g_+(q))s, & \mathrm{if} \ N-\delta \le t \le N;\\
(\eta^N +\eta^N g_+(q))s,  & \mathrm{if}\  t>N,\\
\end{cases}
$$
where $s=e^t$.
One can check that $F^{\eta}$ has the following properties:
\begin{itemize}
\item
 The Morse function $F^{\eta}$ is increasing with respect to $t$ when $t\le -N+\delta$ or $t \ge N-\delta$.
 This implies that the  critical points of $F^{\eta}$ and the  critical points of $G$ are in $1-1$ correspondence and are all contained in $\Sigma \cap \big( [-N, N] \times \bbR^3\big)$.
 \item 
The Morse function $F^{\eta}$ is bounded by $ 2 \eta^N e^N $ on $\Sigma \cap \big([-N, N]\times \bbR^3\big)$.
\item Write $F^{\eta}\big\vert_{\{ N\} \times \Lambda_{+}}$ as $f^{\eta}_{+}e^{N}$ and $F^{\eta}\big\vert_{\{ -N\} \times \Lambda_{-}}$ as $f^{\eta}_{-}e^{-N}$ , respectively.
The graph of  $dF^{\eta}$ on $(-\infty,-N) \times \Lambda_-$ is the same as $(-\infty,-N) \times graph(df^{\eta}_-)$ and the graph of  $dF^{\eta}$ on $(N, \infty) \times \Lambda_+$ is the same as $(N, \infty) \times graph(df^{\eta}_+)$.
\end{itemize}

Push $(\Sigma, \Lambda_+, \Lambda_-)$ off through $F^{\eta}$ and obtain a copy of $(\Sigma, \Lambda_+, \Lambda_-)$, labeled by $(\Sigma^1, \Lambda^1_+, \Lambda^1_-)$.
Label the original $(\Sigma, \Lambda_+, \Lambda_-)$ by $(\Sigma^2, \Lambda^2_+, \Lambda^2_-)$.
Thus $\Sigma^1$ is a push off of $\Sigma^2$ through $F^{\eta}$ and 
 $\Lambda^1_+$ (resp. $\Lambda^1_-$) is a push off  of $\Lambda^2_+$ (resp. $\Lambda^2_-$) through $f^{\eta}_+$ (resp. $f^{\eta}_-$). 

\subsection{The long exact sequence}\label{LES}
Now we apply the Floer theory to the pair of cobordisms $\Sigma^1 \cup \Sigma^2$ constructed in Section \ref{Pair} and get a long exact sequence.
Combining the long exact sequence with the augmentation category map induced by the exact Lagrangian cobordism,
we obtain an obstruction to the existence of the exact Lagrangian cobordisms.

Recall that the grading for generators in the Cthulhu chain complex depends on the choice of capping paths.
According to the canonical Floer theory \cite{Floer}, we can choose a family of capping paths such that
the Conley-Zehnder  index 
of any double point  $x$ of $\Sigma^1\cup \Sigma^2$ satisfies $CZ(\Gamma_x) = Ind_{F^{\eta} (x)}$. 
Now we apply the Floer theory to the pair of Lagrangian cobordisms $\Sigma^1\cup \Sigma^2$ and have the following theorem.

\begin{thm}
Let $\Sigma^i$, for $i=1,2$, be the cobordisms from  $\Lambda^i_-$ to $\Lambda^i_+ $
as constructed in Section \ref{Pair}. 
Suppose $\epsilon^i_-$ is an augmentation of $\calA(\Lambda^i_-)$
and $\epsilon^i_+$ is the  augmentation of $\calA(\Lambda^i_+)$ induced by $\Sigma^i$.
Fix a suitable domain dependent almost  complex structure on $\bbR \times \bbR^3$ that is compatible with the symplectic form and cylindrical ends.
For $\eta$ small enough,
the Cthulhu chain complex is
$$Cth^{k}(\Sigma^1, \Sigma^2) =C^{k -2}(\Lambda_+^1, \Lambda_+^2) \oplus CF^{k }(\Sigma^1, \Sigma^2) \oplus C^{k -1}(\Lambda_-^1, \Lambda_-^2).$$
Under this decomposition, the differential is
	$$d= 
	\begin{pmatrix}
	d_{++}& d_{+0} & d_{+-}\\
	0 & d_{00} & d_{0-}\\
	0 & 0 & d_{--}\\
	\end{pmatrix}.
	$$	
Moreover,
\begin{enumerate}
\item the map $d_{00}$ is the Morse co-differential induced by $F^{\eta}$, i.e.,
 the chain complex $(CF^{ k}(\Sigma^1, \Sigma^2), d_{00})$ is the Morse co-chain complex $(C^{k}_{Morse} F^{\eta}, d_{F^{\eta}})$ induced by $F^{\eta}$;
\item  the chain complex 
 $(C^{k -2}(\Lambda_+^1, \Lambda_+^2) , d_{++})=(Hom^{k-1}_+(\epsilon^1_+, \epsilon^2_+), m_1)$
 while the chain complex
$(C^{k -1}(\Lambda_-^1, \Lambda_-^2), d_{--})=(Hom^{k }_+(\epsilon^1_-, \epsilon^2_-), m_1)$.
\end{enumerate}
\end{thm}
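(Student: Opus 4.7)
The plan is to treat the three claims in order, reducing each to a structural computation with the Morse function $F^{\eta}$.

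For the grading decomposition, the shifts by $2$ and $1$ already come out of the wrapping analysis sketched at the end of Section~\ref{diff}. Concretely, after flowing $\Sigma^1$ by $\Phi^s_H$, every generator $c$ of $Cth(\Sigma^1,\Sigma^2)$ becomes a transverse intersection point $\check c$ of $\Phi^s_H(\Sigma^1)\cup\Sigma^2$, and with our canonical choice of capping paths one has $CZ(\Gamma_x)=Ind_{F^{\eta}}(x)$ for double points, $|\check a^{+}|=|a^{+}|+2$ for Reeb chords in $C(\Lambda^1_+,\Lambda^2_+)$, and $|\check a^{-}|=|a^{-}|+1$ for Reeb chords in $C(\Lambda^1_-,\Lambda^2_-)$. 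Rewriting each summand as lying in the common Conley--Zehnder grading $k$ of $\check c$ produces precisely the stated decomposition; the differential $d$ therefore has degree $+1$ automatically.

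For $d_{00}$, the key input is that $F^{\eta}$ can be made $C^{1}$-small by shrinking $\eta$, so that $\Sigma^1$ is the graph of a small closed $1$-form inside a Weinstein tubular neighborhood of $\Sigma^2$. Under this smallness, the standard Floer/Ekholm correspondence identifies rigid $J$-holomorphic strips on $\Sigma^1\cup\Sigma^2$ with no Reeb chord asymptotics with the (negative) gradient flow lines of $F^{\eta}$; this is exactly the Morse-theoretic degeneration underlying \cite{Floer}. Disks counted by $d_{00}$ in principle also allow negative Reeb chord punctures at pure chords of $\Lambda_-^1$ and $\Lambda_-^2$ augmented by $\epsilon_-^1$ and $\epsilon_-^2$, but these contribute total action at least a fixed positive constant (the minimal Reeb chord length on $\Lambda_-$), while the actions of the input and output intersection points are bounded by $2\eta^{N}e^{N}$; for $\eta$ sufficiently small, positivity of $E_\omega$ forces all such punctures to be absent. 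Hence $(CF(\Sigma^1,\Sigma^2),d_{00})$ is the Morse co-chain complex of $F^{\eta}$; the grading matches because $CZ(\Gamma_x)=Ind_{F^{\eta}}(x)$.

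For the claims on $d_{\pm\pm}$, I would observe that on the positive end our construction realizes $\Lambda^1_+\cup\Lambda^2_+$ precisely as a Morse $2$-copy of $\Lambda_+$ in the sense of Section~\ref{augcat}, with perturbing Morse function $f^{\eta}_+$ playing the role of $f$; similarly on the negative end with $f^{\eta}_-$. The DGA $\calA(\Lambda_\pm^{(2)})$ computed by the cylindrical moduli spaces $\widetilde{\mathcal M}_{J_\pm}(a^\pm_j;\mathbf p^\pm,a^\pm_i,\mathbf q^\pm)$ is then exactly the DGA recalled in Section~\ref{augcat}, and augmenting the pure Reeb chords of the two components by $\epsilon^{1}_\pm$ and $\epsilon^{2}_\pm$ gives precisely the conjugated differential $\partial^{(2)}_\epsilon$ restricted to the mixed subcomplex $C^{12}$. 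Taking the adjoint (which is what happens when we read off $d_{\pm\pm}$ from the Cthulhu conventions) turns this into $m_1$ on $(C^{12})^{\vee}=Hom_+(\epsilon^{1}_\pm,\epsilon^{2}_\pm)$, and the computed grading shifts line up: a generator $a^{\pm}\in C^{k\mp 2,\,k\mp 1}(\Lambda^1_\pm,\Lambda^2_\pm)$ corresponds to $(a^{\pm})^{\vee}\in Hom^{k-1}_+$ or $Hom^{k}_+$ as asserted. The only point requiring real care is invariance of the identification under our particular $f^{\eta}_\pm$ rather than the generic $f$ used in Section~\ref{augcat}; this follows from the invariance of $\calA ug_+$ under the choice of Morse perturbation proved in \cite{NRSSZ}.

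The main technical obstacle is step two: proving that for $\eta$ small enough every rigid $J$-holomorphic disk contributing to $d_{00}$ is obtained from a Morse trajectory of $F^{\eta}$ on the nose, rather than only modulo higher-order corrections. This requires a gluing/compactness argument controlling disks uniformly in $\eta$ on the non-cylindrical part of $\Sigma$, together with the action estimate above to rule out contributions with Reeb chord punctures at the negative end.
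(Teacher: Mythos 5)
Your treatment of the grading shifts, of the energy estimate ruling out pure Reeb chord punctures in $d_{00}$, and of the identification of $d_{\pm\pm}$ with $m_1$ on the two-copy ends all follow essentially the same route as the paper. However, there is one genuine gap: you never verify that every intersection point of $\Sigma^1\cup\Sigma^2$ has \emph{positive} action. This is not a cosmetic point. By Remark \ref{Assumption}, the claimed upper-triangular form of $d$ (i.e.\ the vanishing of the Nessie map $CF(\Sigma^1,\Sigma^2)\to C(\Lambda^1_-,\Lambda^2_-)$) is exactly equivalent to the positivity of the actions of the intersection generators; degree considerations alone do not rule that component out. The paper's proof opens with this computation: using the primitive formula $g_1=g_2+\int_0^1(H+e^t\alpha(X_H))\circ\phi_H^s\,ds$ from (\ref{primitive}), normalizing both primitives to vanish on the negative cylindrical end, and using that $X_H$ vanishes at each double point $x$, one gets $\fraka(x)=g_2(x)-g_1(x)=-H=F^{\eta}\circ\psi(x)>0$.

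Relatedly, in your action estimate for $d_{00}$ you assert that the actions of the intersection points are bounded by $2\eta^N e^N$; that bound is precisely the identity $\fraka(x)=F^{\eta}(\psi(x))$ combined with the stated sup bound on $F^{\eta}$, so the same primitive computation is needed there as well and should not be left implicit. Once you add this step, the rest of your argument (smallness of $\eta$ versus the minimal pure Reeb chord length, the appeal to \cite[Lemma 6.11]{EESduality} or the Floer degeneration for $d_{00}=d_{F^{\eta}}$, and the matching of $d_{\pm\pm}$ with the conjugated two-copy differential and hence with $m_1$) lines up with the paper's proof.
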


\begin{proof}

First, we need to show that all the intersection points $x \in CF^{k}(\Sigma^1, \Sigma^2)$ have positive action,
which is the condition for the differential to have the form above by Remark \ref{Assumption}.

Let $g_i$ be a primitive of $\Sigma^i$ for $i=1,2$.
According to the computation (\ref{primitive}), we have
$$g_1 =  g_2 + \displaystyle{\int_0^1  (H + e^t\alpha(X_H)) \circ \phi_H^{s} \ ds},$$
where $H= -\tilde{F^{\eta}} \circ \psi$.
It is not hard to check that $g_1=g_2$ on $\Sigma \cap \big((-\infty, -N)\times \bbR^3\big)$.
Therefore we can assume $g_1=g_2=0$ on $\Sigma \cap \big((-\infty, -N)\times \bbR^3\big)$
 and use $g_1$ and $g_2$ as primitives to define action.
The action of  each intersection point $x$ is 
$$\fraka(x) = g_2(x)-g_1(x)=\displaystyle{\int_0^1  (H + e^t\alpha(X_H)) \circ \phi_H^{s} \ ds}.$$
Notice that the vector field $X_H$ vanishes at the intersection point $x$.
Hence
$$\fraka(x)= -\displaystyle{\int_0^1 H\circ \phi_H^s \ ds= -H = F^{\eta} \circ \psi(x)}>0.$$

Next, we are going to show that for $\eta$ small enough, 
the rigid holomorphic disks that contribute to $d_{00}$ do not include any pure Reeb chords as negative punctures. 
Let $x$ and $y$ be two double points of $\Sigma^1 \cup \Sigma^2$. 
For any rigid holomorphic disk $u \in \calM(x; {\bf p}, y, {\bf q}) $, where ${\bf p}$ and ${\bf q}$ are words of pure Reeb chords of $\Lambda_-^1\cup \Lambda_-^2$,
we have the energy estimate:
$$E_{\omega}(u) \le \fraka(x)-\fraka(y) -\fraka({\bf p})-\fraka({\bf q}).$$
Since $u$ has positive energy, we have
$$\fraka({\bf p})+\fraka({\bf q}) \leq F^{\eta}(\psi(x))-F^{\eta}(\psi(y)) \leq F^{\eta}(\psi(x)).$$
Therefore, for $\eta$ small enough such that the maximum of the Morse function $F^{\eta}$ is smaller than the minimum action of  pure Reeb chords of $\Lambda_-^1$ and $\Lambda_-^2$, 
the moduli space that contributes to $d_{00}$ is of the form $\calM(x;y)$. 
By \cite[Lemma 6.11]{EESduality}, the boundary of a rigid  holomorphic disk with two punctures at intersection points converge to a rigid Morse flow line,  which implies $d_{00} = d_{F^{\eta}}$.
Furthermore, the gradings satisfy $|x|=CZ(\gamma_x) = Ind_{F^{\eta} (x)}$. 
Therefore $(CF^{ k}(\Sigma^1, \Sigma^2), d_{00}) = (C^{k}_{Morse} F^{\eta}, d_{F^{\eta}})$.

Recall that  there is a natural identity map with degree $1$ from
$C^{k-1}(\Lambda_{\pm}^1,\Lambda_{\pm}^2)$ to 
$Hom^{k}_+(\epsilon_{\pm}^1,\epsilon_{\pm}^2)$, respectively.
Moreover, the definitions of $d_{\pm \pm}$ and $m_1$ match as well.
 Hence we have $(C^{k -1}(\Lambda_-^1, \Lambda_-^2), d_{--}) = (Hom^{k }_+(\epsilon^1_-, \epsilon^2_-), m_1)$
while $(C^{k -2}(\Lambda_+^1, \Lambda_+^2) , d_{++})= (Hom^{k-1}_+(\epsilon^1_+, \epsilon^2_+), m_1)$.
\end{proof}



For the rest of the paper, we fix a small enough $\eta$ and  write $F^{\eta}$, $f^{\eta}_+$, $f^{\eta}_-$ as ${F}$, $f_+$, $f_-$, respectively.
According to the Floer theory in Section 4, we have $H^{k}(Cth(\Sigma^1, \Sigma^2), d)=0$, where
$$Cth^{k}(\Sigma^1, \Sigma^2) =Hom_+^{ k-1} (\epsilon^1_+, \epsilon^2_+) \oplus C_{Morse}^{ k}F \oplus Hom_+^ {k}(\epsilon^1_-, \epsilon^2_-) ,$$
and
$$d= 
	\begin{pmatrix}
	m_1 & d_{+0} & d_{+-} \\
	0 & d_F & d_{0-} \\
	0 &  0 & m_1\\
	\end{pmatrix}.
$$
Consider the chain map $\Psi= d_{+-}+d_{0-}$:
 $$\Psi: \ (Hom_+^ {k}(\epsilon^1_-, \epsilon^2_-), m_1) \to \left( Hom_+^{ k} (\epsilon^1_+, \epsilon^2_+)\oplus C_{Morse}^{ k+1}F,\ 
  d'=\begin{pmatrix}
	m_1 & d_{+0}\\
	0  &d_F \\
	\end{pmatrix}
  \right). $$
Notice that the mapping cone of $\Psi$ has trivial homology. 
Therefore,
$$H^{k}(Hom_+(\epsilon^1_-,\epsilon^2_-)) \cong H^{k} Cone(d_{+0}).$$
Hence we have the following long exact sequence:

{\xymatrixcolsep{0.8pc}
\xymatrix{\cdots\arr& H^{k}(C_{Morse} F, d_{F}) \arr & H^{k}Hom_+(\epsilon^1_+,\epsilon^2_+) \arr & H^{k}Hom_+(\epsilon^1_-,\epsilon^2_-) \arr & H^{k+1}(C_{Morse} F, d_{F})\arr &\cdots}}
Moreover, notice that  in the construction in Section \ref{Pair}, the gradient flows of $F$ flow in from the bottom and out of the  top. Hence we have
$$H^{k}(C_{Morse} F, d_F) =H^{k}(\Sigma, \Lambda_-).$$

\begin{cor}\label{lescor}
Let $\Sigma$ be an exact Lagrangian cobordism with Maslov number $0$ from $\Lambda_-$ to $\Lambda_+$.
For $i=1,2$, if
$\epsilon^i_-$ is an augmentation of $\calA(\Lambda_-)$
and $\epsilon^i_+$ is the augmentation of $\calA(\Lambda_+)$ induced by $\Sigma$,
then we have the following long exact sequence:
\begin{equation}\label{les1}
{\xymatrixcolsep{1pc}
\xymatrix{
\cdots  \arr& H^{k}(\Sigma, \Lambda_-)  \arr& H^k Hom_+(\epsilon^1_+,\epsilon^2_+)  \arr& H^{k}Hom_+(\epsilon^1_-,\epsilon^2_-)  \arr& H^{k+1}(\Sigma, \Lambda_-) \arr& \cdots }}
\end{equation}
\end{cor}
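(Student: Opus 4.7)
The plan is to extract the long exact sequence directly from the triangular structure of the Cthulhu chain complex provided by the preceding theorem. First I would apply that theorem to the pair $(\Sigma^1,\Sigma^2)$ constructed in Section \ref{Pair}, so that
$$Cth^{k}(\Sigma^1,\Sigma^2)=Hom_+^{k-1}(\epsilon^1_+,\epsilon^2_+)\oplus C_{Morse}^{k}F\oplus Hom_+^{k}(\epsilon^1_-,\epsilon^2_-),$$
with upper-triangular differential whose diagonal entries are $m_1$, $d_F$, $m_1$. By the Floer theory recalled in Section \ref{FT}, the total cohomology $H^{*}Cth(\Sigma^1,\Sigma^2)$ vanishes, since one may Hamiltonian-isotope $\Sigma^1$ far in the negative $z$-direction to eliminate all generators.

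Next I would package the upper-triangular differential as an iterated mapping cone. Since the differential restricted to $C_{Morse}^{\bullet}F\oplus Hom_+^{\bullet}(\epsilon^1_+,\epsilon^2_+)[-1]$ is itself a mapping cone on $d_{+0}$ with total differential
$$d'=\begin{pmatrix} m_1 & d_{+0}\\ 0 & d_F\end{pmatrix},$$
the full Cthulhu complex is the mapping cone of the chain map
$$\Psi=d_{+-}+d_{0-}:\bigl(Hom_+^{k}(\epsilon^1_-,\epsilon^2_-),m_1\bigr)\longrightarrow \bigl(Hom_+^{k}(\epsilon^1_+,\epsilon^2_+)\oplus C_{Morse}^{k+1}F,\,d'\bigr).$$
Acyclicity of the cone forces $\Psi$ to be a quasi-isomorphism, so from the canonical long exact sequence of a mapping cone I obtain
$$\cdots\to H^{k}(C_{Morse}F,d_F)\to H^{k}Hom_+(\epsilon^1_+,\epsilon^2_+)\to H^{k}Hom_+(\epsilon^1_-,\epsilon^2_-)\to H^{k+1}(C_{Morse}F,d_F)\to\cdots.$$

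The last step is to identify $H^{*}(C_{Morse}F,d_F)$ with $H^{*}(\Sigma,\Lambda_-)$. By construction the Morse function $F$ is strictly increasing in $t$ on both cylindrical ends, so its negative gradient flow enters $\Sigma$ through $\Lambda_+$ and exits through $\Lambda_-$; the dual picture shows that the Morse \emph{co}-chain complex of $F$ computes cohomology relative to the incoming end, namely $H^{*}(\Sigma,\Lambda_-)$. Substituting this identification into the sequence above yields (\ref{les1}).

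The only genuinely delicate point I anticipate is the last identification: one must verify that with the conventions of Section \ref{GVS} (grading $|x|=CZ(\gamma_x)=Ind_F(x)$ and the cohomological rather than homological Morse complex), the resulting complex really computes $H^{*}(\Sigma,\Lambda_-)$ rather than $H^{*}(\Sigma,\Lambda_+)$ or absolute cohomology. This is a matter of checking the direction of the gradient flow of $F$ near the two cylindrical ends against the standard Morse-theoretic boundary conventions, which is straightforward given the explicit formula for $F^{\eta}$ built in Section \ref{Pair} but should be recorded carefully. Everything else is a formal consequence of the upper-triangular decomposition and the acyclicity of $Cth(\Sigma^1,\Sigma^2)$.
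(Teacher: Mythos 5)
Your proposal is correct and follows essentially the same route as the paper: the paper also realizes $Cth(\Sigma^1,\Sigma^2)$ as the cone of $\Psi=d_{+-}+d_{0-}$ into $Cone(d_{+0})$, uses acyclicity to conclude $H^{k}Hom_+(\epsilon^1_-,\epsilon^2_-)\cong H^{k}Cone(d_{+0})$, reads off the long exact sequence of that cone, and identifies $H^{k}(C_{Morse}F,d_F)$ with $H^{k}(\Sigma,\Lambda_-)$ from the fact that the gradient flow of $F$ enters through $\Lambda_-$ and exits through $\Lambda_+$. The one point you flag as delicate is the same one the paper disposes of in a single sentence.
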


\begin{rmk}
When the Maslov number of $\Sigma$ is $d$, which is not $0$, the method above works as well. 
The only difference is that the grading  of generators in the Cthulhu chain complex is defined mod $d$.
Thus, the long exact sequence (\ref{les1}) holds with gradings mod $d$.
\end{rmk}

If $\epsilon^1_- = \epsilon^2_-= \epsilon_-$, by \cite[Section 5.2]{NRSSZ},  we have the identification
$$H^k Hom_+(\epsilon, \epsilon) \cong LCH^{\epsilon}_{1-k}(\Lambda),$$
where $LCH^{\epsilon}_k(\Lambda)$ is  the linearized contact homology of $\Lambda$.
The long exact sequence (\ref{les1}) can be rewritten in terms of linearized contact homology:
\diag{\cdots \arr & H^{k}(\Sigma, \Lambda_-) \arr & LCH^{\epsilon_+}_{1-k}(\Lambda_+) \arr& LCH^{\epsilon_-}_{1-k}(\Lambda_-)  \arr& H^{k+1}(\Sigma, \Lambda_-)  \arr& \cdots }

Furthermore, if $\Lambda_-$ is empty, then $\Sigma$ is an exact Lagrangian filling of $\Lambda_+$
and $\epsilon_+$ is an augmentation of $\calA(\Lambda_+)$ induced by the Lagrangian filling.
The long exact sequence (\ref{les1}) gives 
$$H^{k}(\Sigma) \cong H^kHom_+(\epsilon_+,\epsilon_+)\cong LCH^{\epsilon_+}_{1-k}(\Lambda_+),$$
which is the Seidel isomorphism (following \cite{NRSSZ}).
This theorem was conjectured by Seidel \cite{Seidel} and was proved by Dimitroglou Rizell \cite{DR}.

If $\Lambda_+$, instead, is empty and $\calA(\Lambda_-)$ has an augmentation $\epsilon_-$, 
the long exact sequence (\ref{les1}) tells us that
$$H^kHom_+(\epsilon_-,\epsilon_-) \cong H^{k+1}(\Sigma, \Lambda_-) =
\begin{cases}
\bbF & \textrm{if} \ k=1,\\
\bbF^{1-\chi(\Sigma)} & \textrm{if } k=0,\\
0 & \textrm{otherwise.} 
\end{cases} $$
However, by  Sabloff duality (\ref{sabdual}),
$$\dim H^0Hom_+(\epsilon_-,\epsilon_-) = \dim H^2Hom_-(\epsilon_-, \epsilon_-) = 
\dim H^2Hom_+(\epsilon_-, \epsilon_-)=0.$$
This is a contradiction since the unit $e_{\epsilon_-}= -y^{\vee}$ is always in $H^0Hom_+(\epsilon_-, \epsilon_-)$ and is not $0$.
Thus if $\Lambda_+$ is empty, then $\Lambda_-$ does not admit any augmentation. 
This result was previously known  by \cite{CDGG, DRcap}.

\begin{rmk}{\label{connect}}
For the rest of the paper, we will focus on the case where $\Lambda_+$ and $\Lambda_-$ are single component knots.
Given the fact that there does not exist a compact Lagrangian manifold in $\bbR\times\bbR^3$ and $\Lambda_-$ does not admit a cap (since $\Lambda_-$ has an augmentation),
we know that any cobordism $\Sigma$ from $\Lambda_-$ to $\Lambda_+$ must be connected.
\end{rmk}
Combining the long exact sequence (\ref{les1}) with the augmentation category map induced by the exact Lagrangian cobordism $\Sigma$, we have the following theorem.

\begin{thm}\label{main1}
Let $\Sigma$ be an exact Lagrangian cobordism  with Maslov number $0$ from a Legendrian knot $\Lambda_-$ to a Legendrian knot $\Lambda_+$.
For $i=1,2$,
assume $\epsilon^i_-$ is an augmentation of  the $\calA(\Lambda_-)$ with a single base point 
and $\epsilon^i_+$ is the  augmentation of $\calA(\Lambda_+)$ induced by $\Sigma$.
Then 
the map $$i^0: H^0Hom_+(\epsilon^1_+, \epsilon^2_+) \to H^0Hom_+(\epsilon^1_-, \epsilon^2_-)$$
in the long exact sequence (\ref{les1}) is an isomorphism.
Moreover, we have that
\begin{equation}\label{hom}
H^*Hom_+(\epsilon^1_+,\epsilon^2_+) \cong H^*Hom_+(\epsilon^1_-,\epsilon^2_-) \oplus \bbF^{-\chi (\Sigma)}[1],
\end{equation}
where $\bbF^{-\chi (\Sigma)}[1]$ denotes the vector space $\bbF^{-\chi (\Sigma)}$ in degree $1$
and $\chi(\Sigma)$ is the Euler characteristic of the surface $\Sigma$.
\end{thm}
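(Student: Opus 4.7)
The plan is to substitute the topology of $\Sigma$ into the long exact sequence of Corollary \ref{lescor}, reducing the theorem to showing that the connecting homomorphism $\delta$ vanishes in degree $0$.

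I would first compute the relative cohomology $H^*(\Sigma,\Lambda_-)$. By Remark \ref{connect}, $\Sigma$ is a connected compact orientable surface whose boundary $\Lambda_+\sqcup\Lambda_-$ consists of two nonempty circles. Lefschetz duality $H^k(\Sigma,\Lambda_-)\cong H_{2-k}(\Sigma,\Lambda_+)$ then gives $H^0(\Sigma,\Lambda_-)=H^2(\Sigma,\Lambda_-)=0$, while the identity $\chi(\Sigma,\Lambda_-)=\chi(\Sigma)-\chi(\Lambda_-)=\chi(\Sigma)$ forces $\dim H^1(\Sigma,\Lambda_-)=-\chi(\Sigma)$. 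Plugging these into (\ref{les1}) collapses the long exact sequence to isomorphisms $i^k$ for every $k\notin\{0,1\}$ together with the five-term exact sequence
\begin{equation*}
0\to H^0Hom_+(\epsilon^1_+,\epsilon^2_+)\xrightarrow{i^0}H^0Hom_+(\epsilon^1_-,\epsilon^2_-)\xrightarrow{\delta}\bbF^{-\chi(\Sigma)}\to H^1Hom_+(\epsilon^1_+,\epsilon^2_+)\to H^1Hom_+(\epsilon^1_-,\epsilon^2_-)\to 0.
\end{equation*}
Injectivity of $i^0$ is immediate from $H^0(\Sigma,\Lambda_-)=0$. Once $\delta=0$ is established, the right-hand three-term exact sequence splits as vector spaces over $\bbF$, giving $H^1Hom_+(\epsilon^1_+,\epsilon^2_+)\cong H^1Hom_+(\epsilon^1_-,\epsilon^2_-)\oplus\bbF^{-\chi(\Sigma)}$, and combining with the isomorphisms in the remaining degrees yields (\ref{hom}).

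It remains to prove $\delta=0$. Tracing the mapping-cone construction behind (\ref{les1}), $\delta$ sends an $m_1$-cocycle $c\in Hom^0_+(\epsilon^1_-,\epsilon^2_-)$ to $[d_{0-}(c)]\in H^1(C_{Morse}F,d_F)$, where $d_{0-}$ counts rigid $J$-holomorphic disks on $\Sigma^1\cup\Sigma^2$ with positive puncture at an intersection point of $\Sigma^1\cap\Sigma^2$ and negative Reeb chord puncture on $\Lambda^1_-\cup\Lambda^2_-$. My approach is to exploit the freedom to shrink $\eta$: by the positive energy estimate of Section \ref{FT}, such a disk obeys $\fraka(\text{intersection})\geq\fraka(\text{Reeb chord})+\fraka({\bf p})+\fraka({\bf q})$, and the action of any intersection point of $\Sigma^1\cap\Sigma^2$ is bounded by $2\eta^Ne^N$ while any non-Morse Reeb chord of $\Lambda_-^{(2)}$ has action bounded below by a positive constant independent of $\eta$. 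For $\eta$ sufficiently small the negative puncture must therefore lie at the Morse Reeb chord $y^{12}_-$, which is the unique Morse generator in $Hom^0_+(\epsilon^1_-,\epsilon^2_-)$. Writing any degree-zero cocycle as $c=c_{\mathrm{non\text{-}Morse}}+\beta y^\vee$, we reduce to checking that $d_{0-}(y^\vee)\in C^1_{Morse}F$ is a $d_F$-coboundary.

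The hard part will be this final step. The natural route is to mirror the strategy of Section \ref{geointer}---which establishes a bijective correspondence between rigid holomorphic disks on the pair of cobordisms and holomorphic disks on $\Sigma$ decorated by Morse flow lines---but applied to the $d_{0-}$ component of the Cthulhu differential rather than $d_{+-}$. In this picture, the disks counted by $d_{0-}(y^\vee)$ correspond to Morse flow trees of $F^\eta$ emanating from the boundary critical point $y_-\in\Lambda_-$ (the minimum of $f_-$) and terminating at an interior index-one critical point on $\Sigma$. Once the identification is made, the broken-trajectory analysis expresses the count as $d_F\sigma$ for an explicit $\sigma\in C^0_{Morse}F$, producing the required coboundary and completing the proof.
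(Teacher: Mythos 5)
Your reduction is sound up to the last step: the computation of $H^*(\Sigma,\Lambda_-)$, the five-term sequence, the identification of the connecting map with $[c]\mapsto[d_{0-}(c)]$ via the mapping-cone structure, and the action estimate showing that for $\eta$ small $d_{0-}$ kills every non-Morse generator (so that $\delta$ factors through the coefficient of $y^{\vee}$) are all correct, and the target claim that $d_{0-}(y^{\vee})$ is $d_F$-exact is in fact true. The genuine gap is that this final claim is exactly the step you do not prove, and the route you propose for it runs through analytic input that is not available in the paper. The correspondence of Section \ref{geointer} (Theorem \ref{cobcrsp}) only treats rigid disks on $\Sigma\cup\Sigma'$ all of whose punctures are Reeb chords, i.e.\ the disks counted by $d_{+-}$; the disks counted by $d_{0-}$ have their positive puncture at an \emph{intersection point} of $\Sigma^1\cap\Sigma^2$, and the identification of these with Morse flow lines of $\tilde F$ is precisely the statement the paper isolates in the ``Aside'' (Section \ref{aside}), states without proof, and compares to a conjectural analytic lemma of Ekholm. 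Even granting that correspondence, ``broken-trajectory analysis'' does not by itself produce the primitive: $d_{0-}(y^{\vee})$ is a $d_F$-cocycle representing a class in $H^1(\Sigma,\Lambda_-)\cong\bbF^{-\chi(\Sigma)}$, which is nonzero in general, so you need an actual argument that this class vanishes. (The argument would be: the combined complex $C_{Morse}F\oplus C_{Morse}f_-$ computes $H^*(\Sigma)$, its one-dimensional degree-zero cocycle space must contain an element with nonzero $y_-^{\vee}$-component since $H^0(\Sigma,\Lambda_-)=0$, and the cocycle condition for that element exhibits $d_{0-}(y^{\vee})$ as $-d_F\sigma$ with $\sigma$ the sum of the interior minima of $F$ --- but this again leans on the unproven Aside.)

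For contrast, the paper proves surjectivity of $i^0$ without ever computing $\delta$ geometrically. It swaps $\epsilon^1$ and $\epsilon^2$ in the long exact sequence to get $H^2Hom_+(\epsilon^2_+,\epsilon^1_+)\cong H^2Hom_+(\epsilon^2_-,\epsilon^1_-)$, converts this via Sabloff duality (\ref{sabdual}) into $\dim H^0Hom_-(\epsilon^1_+,\epsilon^2_+)=\dim H^0Hom_-(\epsilon^1_-,\epsilon^2_-)$, and then uses the exact sequence (\ref{lesknot}) relating $Hom_-$, $Hom_+$ and $H^*(\Lambda_\pm)$ to reduce the desired inequality $\dim H^0Hom_+(\epsilon^1_+,\epsilon^2_+)\ge\dim H^0Hom_+(\epsilon^1_-,\epsilon^2_-)$ to $\dim\ker\delta_+\ge\dim\ker\delta_-$; the latter follows from the commutative ladder induced by the algebraically defined functor $f_1$, which sends $y_-^{\vee}$ to $y_+^{\vee}$ and hence is injective on $H^0(\Lambda_-)$. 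If you want to keep your strategy, you would need to either prove the disk/flow-line correspondence for intersection-point punctures or find a formal replacement for the exactness of $d_{0-}(y^{\vee})$; otherwise the paper's duality-plus-functoriality argument is the way to close the degree-zero case.
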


\begin{proof}

By Remark \ref{connect}, we have
$$ H^{k}(\Sigma, \Lambda_-) = 
\begin{cases}
\displaystyle{\bbF^{-\chi (\Sigma)} }& \mathrm{if} \ k=1\\
0 & \mbox{if else.} 
\end{cases}
$$

The long exact sequence (\ref{les1})
shows that for $k>1$ or $k<0$,
the map $$i^k: H^kHom_+(\epsilon^1_+,\epsilon^2_+) \to H^kHom_+(\epsilon^1_-,\epsilon^2_-)$$
in the long exact sequence induces an isomorphism 
$$H^kHom_+(\epsilon^1_+,\epsilon^2_+) \cong H^kHom_+(\epsilon^1_-,\epsilon^2_-).$$
When $k=0$ or $1$, we have

\hspace{-0.3in}
{\xymatrixcolsep{.6pc}\xymatrix{
0 \arr &H^0Hom_+(\epsilon^1_+,\epsilon^2_+)\arr^{i^0}& H^0Hom_+(\epsilon^1_-,\epsilon^2_-)\arr &\bbF^{-\chi (\Sigma)} \arr &H^1Hom_+(\epsilon^1_+,\epsilon^2_+) \arr & H^1Hom_+(\epsilon^1_-,\epsilon^2_-)\arr & 0 
}}

For the rest of the proof, we need to show
$$\dim \left(H^0Hom_+(\epsilon^1_+,\epsilon^2_+)\right) \ge \dim \left(H^0Hom_+(\epsilon^1_-,\epsilon^2_-)\right).$$ 
Once this inequality holds, the fact that  $i^0 :  H^0Hom_+(\epsilon^1_+,\epsilon^2_+) \to H^0Hom_+(\epsilon^1_-,\epsilon^2_-) $ is injective implies that it is an isomorphism. 
Note that the long exact sequence (\ref{les1}) is over the field $\bbF$.
It follows that $$H^1Hom_+(\epsilon^1_+,\epsilon^2_+) \cong H^1Hom_+(\epsilon^1_-,\epsilon^2_-) \oplus \bbF^{-\chi (\Sigma)}.$$

To prove the inequality, we exchange the positions of $\epsilon^1$ and $\epsilon^2$ in the long exact sequence (\ref{les1}) and get

\xymatrix{\cdots \arr & H^{k}(\Sigma, \Lambda_-) \arr& H^kHom_+(\epsilon^2_+,\epsilon^1_+) \arr & H^{k}Hom_+(\epsilon^2_-,\epsilon^1_-) \arr & H^{k+1}(\Sigma, \Lambda_-) \arr & \cdots,}
which implies
$$H^2Hom_+(\epsilon^2_+,\epsilon^1_+) \cong H^2Hom_+(\epsilon^2_-,\epsilon^1_-).$$ 
 By Sabloff duality (\ref{sabdual}), we have
$$\dim\left(H^0 Hom_-(\epsilon_{\pm}^1, \epsilon_{\pm}^2)\right)=\dim\left(H^2Hom_+(\epsilon_{\pm}^2,\epsilon_{\pm}^1)\right).$$
Thus $\dim \left(H^0Hom_-(\epsilon^1_+,\epsilon^2_+)\right) = \dim\left( H^0Hom_-(\epsilon^1_-,\epsilon^2_-)\right)$.

Since $\Lambda_+$ and $\Lambda_-$ are both Legendrian knots with a single base point, we have the long exact sequence (\ref{lesknot}) for $\Lambda_+$ and $\Lambda_-$:

{\xymatrixcolsep{1.5pc}
\xymatrix{0 \arr & H^0 Hom_-(\epsilon^1_{\pm},\epsilon^2_{\pm}) \arr &  H^0 Hom_+(\epsilon^1_{\pm},\epsilon^2_{\pm}) \arr & H^0(\Lambda_{\pm})\arr^-{\delta_{\pm}}  & H^1 Hom_-(\epsilon^1_{\pm},\epsilon^2_{\pm}) \arr  & \cdots . }
}

From this long exact sequence, we have
$$
\dim \left(H^0 Hom_+(\epsilon^1_{\pm},\epsilon^2_{\pm}\right) = \dim\left(H^0 Hom_-(\epsilon^1_{\pm},\epsilon^2_{\pm})\right)+ \dim(\ker \delta_{\pm}).$$
Thus, to prove $\dim \left(H^0Hom_+(\epsilon^1_+,\epsilon^2_+)\right) \ge \dim\left( H^0Hom_+(\epsilon^1_-,\epsilon^2_-)\right)$,
we only need to show $\dim(\ker\delta_+) \ge \dim(\ker\delta_-).$

Recall that the cobordism $\Sigma$ from $\Lambda_-$ and $\Lambda_+$  induces an $A_{\infty}$ category map 
$$f: \calA ug_+(\Lambda_-) \to \calA ug _+(\Lambda_+)$$
in the way described in Section \ref{augcat}.
In particular, we get the functor $f_1$ of augmentation categories on the level of morphisms:
$$f_1:Hom_+(\epsilon^1_-, \epsilon^2_-) \to 
Hom_+(\epsilon^1_+, \epsilon^2_+).$$

This map descends to the cohomology level as
$f^*: H^*Hom_+(\epsilon_-^1, \epsilon_-^2) \to H^*Hom_+(\epsilon_+^1, \epsilon_+^2)$.
Notice that $f_1$ sends 
$Hom_-(\epsilon_-^1, \epsilon_-^2)$ to $Hom_-(\epsilon_+^1, \epsilon_+^2)$.
Hence $f_1$ induces a map between the cohomology of the quotient chain complexes, denoted by $f^*$ as well.

We have  the following diagram commutes:
{\xymatrixcolsep{1.2pc}
\diag{0 \arr & H^0 Hom_-(\epsilon^1_+,\epsilon^2_+) \arr&  H^0 Hom_+(\epsilon^1_+,\epsilon^2_+) \arr  & H^0(\Lambda_{+})\arr^-{\delta_+}  & H^1 Hom_-(\epsilon^1_+,\epsilon^2_+) \arr  & \cdot\cdot \cdot \\
 0 \arr & H^0 Hom_-(\epsilon^1_-,\epsilon^2_-) \arr  \aru_{f^*}&  H^0 Hom_+(\epsilon^1_-,\epsilon^2_-) \arr \aru_{f^*}& H^0(\Lambda_{-})\arr^-{\delta_-} \aru_{f^*} & H^1 Hom_-(\epsilon^1_-,\epsilon^2_-) \arr \aru_{f^*} & \cdot\cdot \cdot . }
 }

Thus $f^*(\ker{\delta_-}) \subset \ker\delta_+$.
Furthermore, notice that  $f_1$ sends 
the generator $(y^-)^{\vee}\in C_{Morse}^0(\Lambda_{-})$ to the corresponding $(y^+)^{\vee} \in  C_{Morse}^0(\Lambda_{+})$ and $C^{-1}_{Morse}(\Lambda_+)=0.$
Hence $f^*$ is injective on $H^0(\Lambda_-)$, which implies 
$$\dim(\ker\delta_+) \ge \dim(\ker\delta_-).$$
\end{proof}

If $\epsilon^1_- = \epsilon^2_-= \epsilon_-$ and $\epsilon_-$ comes from a Lagrangian filling $L_-$, then $\epsilon_+$ also comes from the filling $L_+$, which is a concatenation of $\Sigma$ and $L_-$.
By Seidel's isomorphism (following \cite{NRSSZ}), we have $Hom^k_+(\epsilon_{\pm},\epsilon_{\pm}) \cong H^k(L_{\pm})$,
which implies that
$$H^kHom_+(\epsilon_+,\epsilon_+) \cong H^kHom_+(\epsilon_-,\epsilon_-) \textrm{ for } k \neq 1$$
and when $k=1$,
$$H^1Hom_+(\epsilon_+,\epsilon_+) \cong H^1Hom_+(\epsilon_-,\epsilon_-) \oplus \bbF^{-\chi (\Sigma)}.$$

Theorem \ref{main1} is a generalization of Seidel's isomorphism.
Equation (\ref{hom}) holds even if  $\epsilon_-$ does not come from a Lagrangian filling or
$\epsilon^1_- $ and $\epsilon^2_-$ are not the same.

If the two augmentations are the same, we can identify the cohomology of $Hom_+$ space with the linearize contact homology by \cite{NRSSZ}:
$$H^kHom_+ (\epsilon, \epsilon) \cong LCH_{1-k}^{\epsilon}(\Lambda).$$
Now we restate Theorem \ref{main1} in terms of linearized contact homology.
\begin{cor}\label{LCH}
Let $\Sigma$ be an exact Lagrangian cobordism with Maslov number $0$ from a Legendrian knot $\Lambda_-$ to a Legendrian knot $\Lambda_+$.
Assume $\epsilon_-$  is an augmentation of  $\calA(\Lambda_-)$ and $\epsilon_+$ is the augmentation of $\calA(\Lambda_+)$ induced by $\Sigma$.
Then
$$LCH_*^{\epsilon_+}(\Lambda_+)\cong LCH_*^{\epsilon_-}(\Lambda_-) \oplus \bbF^{-\chi (\Sigma)}[0],$$
where $\bbF^{-\chi (\Sigma)}[0]$ denotes the vector space $\bbF^{-\chi (\Sigma)}$ in degree $0$.
\end{cor}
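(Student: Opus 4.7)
The statement is a direct consequence of Theorem \ref{main1}, specialized to the case $\epsilon^1_- = \epsilon^2_- = \epsilon_-$, together with the identification between the cohomology of the diagonal $Hom_+$ complex and linearized contact homology recorded in \cite[Section 5.2]{NRSSZ} and reviewed just before the statement. So the plan is essentially a translation of indices rather than a new geometric argument.

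Concretely, my first step is to set $\epsilon^1_- = \epsilon^2_- = \epsilon_-$ in Theorem \ref{main1}. Because the induced augmentation is defined by $\epsilon_+ = \epsilon_- \circ \phi_\Sigma$, this forces $\epsilon^1_+ = \epsilon^2_+ = \epsilon_+$, and the conclusion of Theorem \ref{main1} becomes
$$H^* Hom_+(\epsilon_+,\epsilon_+) \;\cong\; H^* Hom_+(\epsilon_-,\epsilon_-) \;\oplus\; \bbF^{-\chi(\Sigma)}[1],$$
with the extra summand concentrated in cohomological degree $1$.

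Second, I would apply the isomorphism
$$H^k Hom_+(\epsilon,\epsilon) \;\cong\; LCH^\epsilon_{1-k}(\Lambda),$$
to both sides separately. This replaces the left-hand side by $LCH^{\epsilon_+}_{1-k}(\Lambda_+)$ and the first summand on the right by $LCH^{\epsilon_-}_{1-k}(\Lambda_-)$. Under the reindexing $j = 1-k$, the extra $\bbF^{-\chi(\Sigma)}$ sitting in cohomological degree $k=1$ is sent to homological degree $j=0$, which yields exactly
$$LCH^{\epsilon_+}_*(\Lambda_+) \;\cong\; LCH^{\epsilon_-}_*(\Lambda_-) \;\oplus\; \bbF^{-\chi(\Sigma)}[0],$$
as claimed.

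Since Theorem \ref{main1} is already established, no further analytic work is required, and there is no real obstacle; the only task is the bookkeeping check that the grading conventions on $Hom_+$ used throughout this paper agree with those used in the $H^* Hom_+(\epsilon,\epsilon) \cong LCH^\epsilon_{1-*}$ dictionary of \cite[Section 5.2]{NRSSZ}. Once this is confirmed, the two substitutions above complete the proof.
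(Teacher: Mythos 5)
Your proposal is correct and matches the paper's own derivation: the corollary is obtained exactly by specializing Theorem \ref{main1} to $\epsilon^1_-=\epsilon^2_-=\epsilon_-$ and applying the identification $H^kHom_+(\epsilon,\epsilon)\cong LCH^{\epsilon}_{1-k}(\Lambda)$, under which the $\bbF^{-\chi(\Sigma)}$ summand in cohomological degree $1$ lands in homological degree $0$. No further comment is needed.
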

Therefore, if there exists an exact Lagrangian cobordism $\Sigma$ from $\Lambda_-$ to $\Lambda_+$,
the Poincar{\'e} polynomial of linearized contact homology of $\Lambda_+$ agrees with that of $\Lambda_-$ in  all  degrees except $0$.
In degree $0$ their coefficients differ by $-\chi(\Sigma)$.
This gives a strong and computable obstruction to the existence of exact Lagrangian cobordisms.
One can check the Poincar{\'e} polynomials of linearize contact homology for any two Legendrian knots with small crossings through the atlas in \cite{CNatlas}.
If they do not satisfy the relation given in Corollary \ref{LCH}, there does not exist an exact Lagrangian cobordism between them.

\begin{figure}[!ht]
\labellist
\small
\pinlabel $6_1$ at  -10 150
\pinlabel $4_1$ at  370 150
\endlabellist
\includegraphics[width=4.2in]{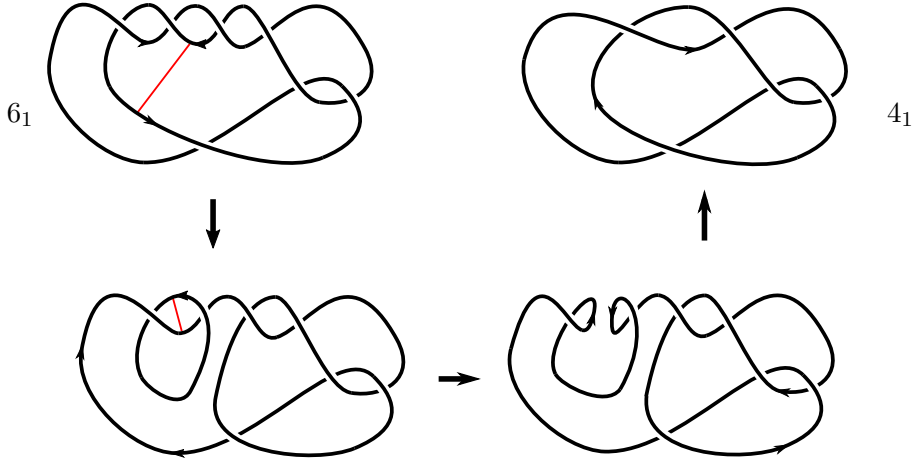}
\caption{The topological cobordism between $6_1$ and $4_1$ can be achieved by  two saddle
moves along the red lines followed by an isotopy.}
\label{topcob}
\end{figure}

For example, 
let $\Lambda_1$ and $\Lambda_2$ be the Legendrian knots with maximum Thurston-Bennequin number of smooth knot type  $4_1$ and $6_1$, respectively (as shown in Figure \ref{4_1vs6_1}).
There is a topological cobordism between $4_1$ and $6_1$ with genus $1$ as shown in
Figure \ref{topcob}.
Moreover, the Thurston-Bennequin numbers of $\Lambda_1$ and $\Lambda_2$ are $-3$ and $-5$, respectively, which satisfy the Thurston-Bennequin number relation (\ref{tb}).
Thus there is a possibility to exist an exact Lagrangian cobordism from $\Lambda_2$ to $\Lambda_1$ with genus $1$.
However, we have the following proposition:
\begin{prop}\label{ex}
There does not exist  an exact Lagrangian cobordism from $\Lambda_2$ to $\Lambda_1$ with Maslov number $0$.
\end{prop}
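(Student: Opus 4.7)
The plan is to apply Corollary \ref{LCH} and derive a numerical contradiction by reading off Poincar\'e polynomials. Suppose, toward a contradiction, that an exact Lagrangian cobordism $\Sigma$ with Maslov number $0$ from $\Lambda_2$ to $\Lambda_1$ exists. I would first invoke Chantraine's relation (\ref{tb}) together with the values $tb(\Lambda_1)=-3$ and $tb(\Lambda_2)=-5$ (visible from the front projections in Figure \ref{4_1vs6_1}) to conclude
$$-\chi(\Sigma) \;=\; tb(\Lambda_1)-tb(\Lambda_2) \;=\; 2.$$
This pins down the Euler characteristic of the would-be cobordism before using any Floer-theoretic input.

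Since the linearized contact homology of $\Lambda_2$ has Poincar\'e polynomial $2t^{-1}+3t$, in particular $\calA(\Lambda_2)$ admits at least one augmentation $\epsilon_-$. I would then form the induced augmentation $\epsilon_+ = \epsilon_-\circ\phi_\Sigma$ of $\calA(\Lambda_1)$ and apply Corollary \ref{LCH} to obtain
$$LCH_*^{\epsilon_+}(\Lambda_1) \;\cong\; LCH_*^{\epsilon_-}(\Lambda_2)\;\oplus\;\bbF^{2}[0].$$
Reading off Poincar\'e polynomials, the right-hand side equals $2t^{-1}+2+3t$, whereas the left-hand side must coincide with the linearized Poincar\'e polynomial $t^{-1}+2t$ of $\Lambda_1$. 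These polynomials disagree in \emph{every} degree, and this numerical mismatch rules out the existence of $\Sigma$.

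The only point deserving care is ensuring that the Poincar\'e polynomial of $LCH_*^{\epsilon_+}(\Lambda_1)$ is actually $t^{-1}+2t$, rather than the polynomial associated to some other augmentation of $\Lambda_1$ that happens to accommodate the extra $\bbF^2[0]$ summand. This is immediate once one checks, for instance from the atlas \cite{CNatlas}, that $t^{-1}+2t$ is the unique Poincar\'e polynomial of linearized contact homology of $\Lambda_1$ over all augmentations (equivalently, that the set of Poincar\'e polynomials, a Legendrian invariant, is a singleton for $\Lambda_1$). With that verification in hand there is no substantive obstacle, and the proposition follows as a direct application of Corollary \ref{LCH} to the specific numerical data of $\Lambda_1$ and $\Lambda_2$.
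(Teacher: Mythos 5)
Your proposal is correct and follows the paper's own argument: both apply Corollary \ref{LCH} to the Poincar\'e polynomials $t^{-1}+2t$ and $2t^{-1}+3t$ and observe the required relation fails (indeed already in degrees $\pm 1$, so the computation of $\chi(\Sigma)$ via the Thurston--Bennequin relation is a harmless but unnecessary extra step). Your added caveat about checking that $t^{-1}+2t$ is the unique linearized Poincar\'e polynomial of $\Lambda_1$ is a legitimate point of care that the paper leaves implicit.
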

\begin{proof}
 The Poincar{\'e} polynomials of linearized contact homology for $\Lambda_1$ and $\Lambda_2$ are $t^{-1}+2t$ and $2t^{-1} +3t$, respectively.
As a result of Corollary \ref{LCH}, there does not exist an exact Lagrangian cobordism from $\Lambda_2$ to $\Lambda_1$ with Maslov number $0$.
\end{proof}

\subsection{Geometric description of the differential map.}\label{geointer}

Let $\Sigma$ be an exact Lagrangian cobordism from a Legendrian knot $\Lambda_-$ to a Legendrian knot $\Lambda_+$.
For $i=1,2$, assume $\epsilon^i_-$ is an augmentation of $\calA(\Lambda_-)$ and $\epsilon^i_+$ is the augmentation induced by $\Sigma$.
So far we have two maps from $Hom_+(\epsilon^1_-, \epsilon^2_-)$ to 
$Hom_+(\epsilon^1_+, \epsilon^2_+)$.
One is 
the geometric map $d_{+-}$ in the differential of the Cthulhu chain complex $Cth(\Sigma^1,\Sigma^2)$ defined by counting rigid holomorphic disks with boundary on $\Sigma^1 \cup \Sigma^2$.
The other map is the augmentation category map induced by $\Sigma$ on the level of morphisms $f_1$, defined algebraically in Section \ref{augcat}.
In this section, we will show that, with a choice of Morse function $F$ on the cobordism $\Sigma$, the maps $d_{+-}$ and $f_1$ are the same.
To do that, we describe the two maps separately and then compare their images on each generator of $Hom_+(\epsilon^1_-, \epsilon^2_-)$.

In order to describe $d_{+-}$, we want to interpret  rigid holomorphic disks with boundary on $\Sigma^1 \cup \Sigma^2$ in terms of  rigid holomorphic disks with boundary on $\Sigma$ together with  negative gradient flows of a Morse function. 
This is  analogous to a result  in \cite{EESduality}, which
gives a correspondence between  rigid holomorphic disks with boundary on a $2$-copy of a Legendrian submanifold $L$ and  rigid holomorphic disks with boundary on $L$ together with  negative gradient flows of a Morse function.
Now let us describe the result in \cite{EESduality}  with details.

Let $L$ be a Legendrian submanifold in the contact manifold $(P\times \bbR, \ker(dz-\theta))$, where $(P, d\theta)$ is an  exact symplectic $2n$-dimensional manifold.
Instead of considering  holomorphic disks in the symplectization of $P\times \bbR$ with boundary on $\bbR\times L$, according to  \cite{DR},
we can consider  holomorphic disks in $P$  with boundary on $\pi(L)$,
where $\pi$ is the projection $P \times \bbR \to P$.
See \cite[Section 2.2.3]{EESduality} for the detailed definition of holomorphic disks with boundary on $\pi(L)$.
As the points on $\pi(L)$ and points on $L$ are naturally corresponded except that the double points of $\pi(L)$ correspond to the Reeb chords of $L$, 
we refer the holomorphic disks as {\bf $J$-holomorphic disks with boundary on $L$} as in \cite{EESduality}, where
 $J$ is a generic almost complex structure on $P$.
Choose a Morse-Smale pair $(f, g)$, where $f$ is a Morse function $L \to \bbR$ and $g$ is a Riemannian metric on $L$, such that $(f, g, J)$ is {\bf adjusted to $L$} in the sense of \cite[Section 6.3]{EESduality}.
Push $L$ off through the Morse function $f$ and get a $2$-copy of $L$, denoted by $2L$.
In order to describe rigid holomorphic disks with boundary on $2L$, 
we need to introduce the generalized disks determined by $(f, g, J)$.
A {\bf generalized disk} is  a pair $(u, \gamma)$, where
\begin{itemize}
\item  $u\in \calM$ is a $J$-holomorphic disk with boundary on $L$; 
\item $\gamma$ is a negative gradient flow of $f$ with one end on the boundary of $u$ and the other end at a critical point $p$ of the Morse function $f$;
\item the boundary of $u$ and $\gamma$ intersect transversely.
\end{itemize}
The point $p$ is called a {\bf negative Morse puncture} if the flow line $\gamma$ flows toward $p$,
and is called a {\bf positive Morse puncture} if $\gamma$ flows away from $p$.
The formal  dimension  $ \dim(u, \gamma)$ is defined by
$$
\dim(u, \gamma)=
\begin{cases}
\displaystyle{\dim \calM +1 +Ind_{f}(p)-n },& \mathrm{if} \ p  \textrm{ is a positive Morse puncture,}\\
{\dim \calM +1 - Ind_{f}}(p), & \textrm{if } p \textrm{ is a negative Morse puncture.}
\end{cases}  
$$
The generalized disk $(u, \gamma)$ is called {\bf rigid} if $\dim(u, \gamma)=0$.

The rigid holomorphic disks with boundary on a $2$-copy of $L$ can be described as below in terms of whether their punctures are Morse Reeb chords or non-Morse Reeb chords (as defined in Section \ref{augcat}).
\begin{lem}[{\cite[Theorem 3.6]{EESduality}\label{Legcrsp}}]
Let $(f, g, J)$ be a pair described above that is adjusted to  the Legendrian submanifold $L$.
Push $L$ off through the Morse function $f$ and get a $2$-copy $2L$.
There are bijective correspondences below:
\begin{itemize}
\item Rigid holomorphic disks with boundary on $2L$ that have one positive puncture and one negative puncture at non-Morse mixed Reeb chords  and the other punctures at pure Reeb chords
are in $1-1$ correspondence with   rigid holomorphic disks with boundary on $L$ as shown in Figure \ref{Legdiskcrsp} $(a)$.
\item Rigid holomorphic disks with boundary on $2L$ that have exactly one puncture at a Morse Reeb chord 
are in $1-1$ correspondence with rigid generalized disks $(u, \gamma)$  determined by $(f, g, J)$ as shown in Figure \ref{Legdiskcrsp} $(b)$.
\item Rigid holomorphic disks with boundary on $2L$ that have two punctures at Morse Reeb chords are in $1-1$ correspondence with 
rigid negative gradient flows of the Morse function $f$.
\end{itemize}
\end{lem}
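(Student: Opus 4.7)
The natural strategy is an adiabatic degeneration argument in the spirit of Piunikhin--Salamon--Schwarz, where the Morse function serves as a neck parameter. Introduce a scaling $\lambda>0$ and consider the family of $2$-copies $2L_\lambda$ obtained by pushing $L$ off through $\lambda f$. For each $\lambda$, let $\calM_\lambda$ denote the moduli space of rigid $J$-holomorphic disks with boundary on $2L_\lambda$ realizing the prescribed asymptotic data. The plan is to show that as $\lambda\to 0$ the moduli spaces $\calM_\lambda$ are in bijection with the three types of configurations listed in the lemma, via a compactness-and-gluing correspondence.

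First I would carry out the compactness direction. As $\lambda\to 0$ the two sheets of $2L_\lambda$ collapse onto $L$; non-Morse mixed Reeb chords keep their actions bounded away from $0$, while the Morse Reeb chords associated to critical points of $f$ have actions of size $O(\lambda)$. An SFT-type compactness argument then shows that any sequence $u_\lambda\in\calM_\lambda$ subconverges to a broken holomorphic building on $L$. Non-Morse mixed Reeb chord punctures pass to ordinary Reeb chord punctures; pure Reeb chord punctures stay pure; Morse Reeb chord punctures, because their actions collapse, must produce boundary segments on $L$ that degenerate to negative gradient trajectories of $f$ terminating at critical points. Matching the three cases of the lemma with the possible limits is a direct bookkeeping argument on the number of Morse Reeb chord punctures.

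Second I would carry out the gluing direction. Given a rigid configuration on $L$ -- a holomorphic disk, a generalized disk $(u,\gamma)$, or a negative gradient flow line of $f$ -- I would construct for all sufficiently small $\lambda$ a unique element of $\calM_\lambda$ converging to it. The technical core is a standard pregluing and Newton-iteration argument: one builds an approximate solution on $2L_\lambda$ by interpolating between the holomorphic piece and a neighborhood of the flow line $\gamma$, and then shows the linearized Cauchy--Riemann operator is surjective with uniformly bounded right inverse. The hypothesis that $(f,g,J)$ be adjusted to $L$ is precisely what makes the interface where $\partial u$ meets $\gamma$ transverse and guarantees that the gluing parameter can be solved for uniquely. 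The formal dimension formula $\dim(u,\gamma)=\dim\calM+1+\mathrm{Ind}_f(p)-n$ for a positive Morse puncture (respectively $\dim\calM+1-\mathrm{Ind}_f(p)$ for a negative one) is set up so that rigidity on the source side matches rigidity on the target side, which makes the correspondence a bijection of $0$-dimensional strata.

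The main obstacle I anticipate is the transversality analysis for the broken moduli spaces: one must arrange the triple $(f,g,J)$ generically so that holomorphic disks on $L$, their boundary evaluation maps, and the stable/unstable manifolds of $f$ all meet transversely, and simultaneously rule out spurious limits such as disk bubbling without Reeb chord asymptotics or multiple-level breaking within the collapsing neck. Once transversality is secured, the rigid limit must fall into exactly one of the three tabulated combinatorial types; and once compactness plus gluing is secured, the limit map $\calM_\lambda\to\{\text{configurations on }L\}$ is a bijection for every sufficiently small $\lambda$, which is the content of the lemma.
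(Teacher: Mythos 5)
The paper does not prove this lemma at all: it is quoted verbatim as Theorem 3.6 of the cited duality paper of Ekholm--Etnyre--Sabloff, and all of the analytic content lives in that reference. Your sketch (shrink the perturbation, SFT compactness sending Morse-chord punctures to gradient-trajectory degenerations, then gluing with the dimension formula matching rigidity on both sides) is a faithful outline of the strategy used there, so it is consistent with the source; just be aware that the compactness, transversality, and gluing steps you defer are precisely the substance of the cited theorem, not routine afterthoughts.
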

\begin{figure}[!ht]
\labellist
\small
\pinlabel $2L$ at  150 120
\pinlabel $L$ at  210 120
\pinlabel $u$ at 250 70
\pinlabel $(a)$ at 180 0
\endlabellist
\includegraphics[width=4in]{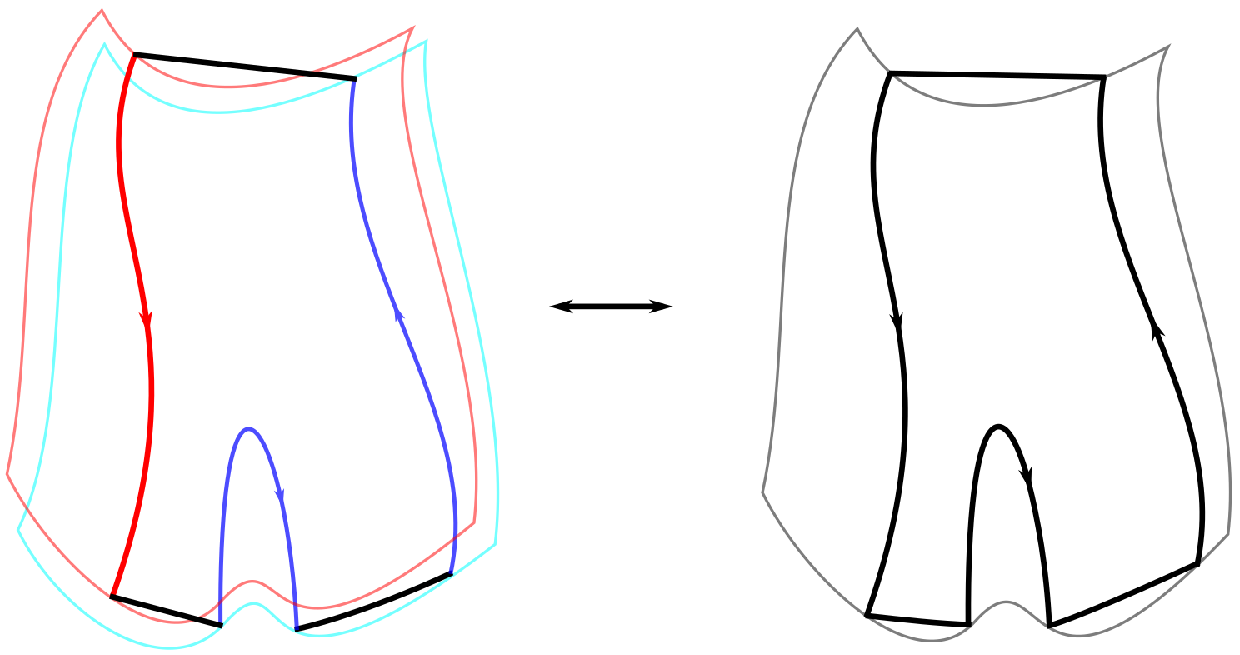}

\vspace{0.1in}

\labellist
\small

\pinlabel $2L$ at  235 170
\pinlabel $L$ at  320 170
\pinlabel $\gamma$ at 393 70
\pinlabel $u$ at 510 120
\pinlabel $(b)$ at 270 0
\endlabellist
\includegraphics[width=4in]{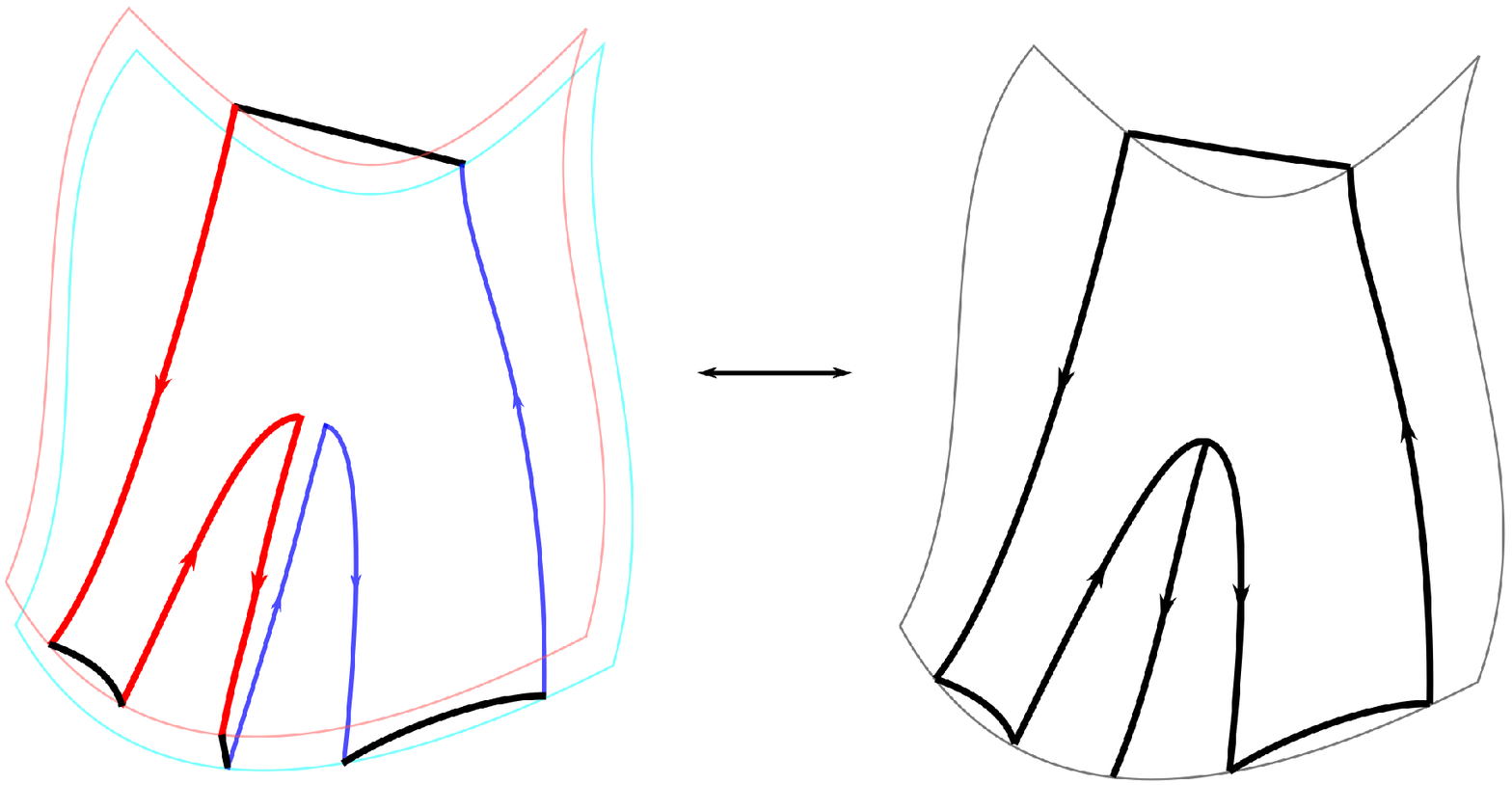}

\vspace{0.1in}

\caption{ Schematic picture of the correspondences in Lemma \ref{Legcrsp}.
The arrows indicate the orientation of  holomorphic disks and the negative gradient flow line.}
\label{Legdiskcrsp}
\end{figure}

In order to get an analogous description for a $2$-copy of $\Sigma$, 
we need a result in \cite{EHK} to relate  rigid holomorphic disks with boundary on a cobordism $\Sigma$ 
to   rigid holomorphic disks with boundary on some Legendrian submanifold $L_{\Sigma}$.

Let us first construct the Legendrian  submanifold $L_{\Sigma}$.
Suppose $\Sigma$ is an exact Lagrangian submanifold in $\big(\bbR \times \bbR^3, d(e^t \alpha)\big)$ from $\Lambda_-$ to $\Lambda_+$.
Assume it is cylindrical outside of $[-N+\delta, N-\delta]\times \bbR^3$, where $\delta$ is  a small positive number.
Under the symplectomorphism 
$$
\begin{array}{rl}
\psi:  \big(\bbR \times \bbR^3, d(e^t \alpha)\big) &\to \big(T^*(\bbR_{>0}\times \bbR), d\theta \big)\vspace{0.1in}\\
(t, x, y, z) &\mapsto \big(( e^t, x ), ( z, e^t y)\big),
\end{array}
$$
the cobordism $\Sigma$ can be viewed as a cobordism in $\big(T^*(\bbR_{>0} \times \bbR), d\theta\big)$, where $\theta$ is the negative Liouville form of the cotangent bundle.
Let $a_-= e^{-N}$ and $a_+=e^{N}$.
There exists a small number $\epsilon>0$ such that $\Sigma$ is cylindrical outside of 
 $T^*\big([a_-+\epsilon, a_+-\epsilon] \times \bbR \big)$.
Chopping off the ends of $\Sigma$, we get a cobordism in $T^*\big( [a_-, a_+] \times \bbR\big)$ with the canonical symplectic form.
Lift it to be a Legendrian submanifold $\overline{\Sigma}$ in the $1$-jet space $J^1\big([a_-, a_+] \times \bbR\big)= T^*\big([a_-, a_+] \times \bbR\big) \times \bbR$.
Near the positive boundary $J^1\big((a_+-\epsilon, a_+] \times \bbR\big)$, the Legendrian $\overline{\Sigma}$ can be parametrized as 
$$
(  s,\ x_{+}(q),\ z_{+}(q),  \ s y_{+}(q),  \ s z_{+}(q)+ B_{+}) = j^1(s z_{+}(q)+B_{+})
$$
for some  constant number $B_+$,
where $s=e^t$ and $(t,q) \in \Sigma\cap \big((\log(a_+-\epsilon),\log a_+] \times \bbR^3\big)=(\log(a_+-\epsilon),\log a_+]  \times \Lambda_{+}$.
Here $s z_{+}(q)+B_{+}$ may not be a function of $(s, x)$. 
However, consider $\{(x_{+}(q), z_{+}(q))| q \in \Lambda_+\}$, which is the front projection of $\Lambda$ to $xz$-plane.
The cusps divide the front diagram of $\Lambda_+$ into pieces.
Note that on each piece $z_{+}(p)$ is a perfect function of $x_{+}(p)$ and at each cusp, the two functions from different pieces match at the cusp.
Therefore, we can write the parametrization as $j^1(s z_{+}(q)+B_{+})$.
Similarly, near the negative boundary $J^1\big( [a_-, a_-+\epsilon) \times \bbR \big)$, the Legendrian $\overline{\Sigma}$ can be parametrized as 
$$
( s,\ x_{-}(q),  \ z_{-}(q),  \ s y_{-}(q), \ s z_{-}(q)+ B_{-}) = j^1(s z_{-}(q)+B_{-}),
$$
where $(s,q) \in [a_-, a_-+\epsilon)\times \Lambda_{-}$ and $B_{-}$ is a constant number.

However, notice that $\overline{\Sigma}$ does not have any Reeb chords.
Therefore, we consider the {\bf Morse Legendrian} $\overline{\Sigma^{Mo}}$,
which is a Legendrian submanifold in $J^1\big([a_-, a_+] \times \bbR\big)$ that 
agrees with $\overline{\Sigma}$ on $J^1\big((a_-+\epsilon, a_+-\epsilon) \times \bbR \big)$.
But
near the $(\pm)$-boundary, the Morse Legendrian can be parametrized as $j^1\big((A_{\pm}\mp(s-a_{\pm})^2)  z_{\pm}(q)\big)$, i.e.
\begin{equation}\label{par}
 \big(s, \ x_{\pm}(q),  \ \mp2(s-a_{\pm}) z_{\pm}(q),\  (A_{\pm}\mp(s-a_{\pm})^2) y_{\pm}(q), \  (A_{\pm}\mp(s-a_{\pm})^2)  z_{\pm}(q)\big),
\end{equation}
where $A_{\pm}$ are positive numbers.
The key property of the Morse Legendrian is that the Reeb chords of $\overline{\Sigma^{Mo}}$ on the $(\pm)$-boundary are in bijective correspondence with  Reeb chords of $\Lambda_{\pm}$, respectively.

There are isotopies from $s z_{\pm}(q)+B_{\pm}$ to $(A_{\pm}\mp(s-a_{\pm})^2)  z_{\pm}(q)$\vspace{0.02in}, respectively,
which induce a diffeomorphism
from $\overline{\Sigma}$ to
the Morse Legendrian $\overline{\Sigma^{Mo}}$.
Extend $\overline{\Sigma^{Mo}}$ to be a Legendrian submanifold $L_{\Sigma}$ in $J^1(\bbR_{> 0} \times \bbR)$ by adding 
$$j^1\big((A_{+}-(s-a_{+})^2)  z_{+}(q)\big)$$ with $(s, q) \in (a_+,\infty)\times \Lambda_{+}$ to the positive boundary and adding $$j^1\big((A_{-}+(s-a_{-})^2)  z_{-}(q)\big)$$ with $(s, q) \in (0,a_-)\times \Lambda_{-}$ to the negative boundary.
In other words, when $s<a_-+\epsilon$ or $s>a_+-\epsilon$, we can parametrize $L_{\Sigma}$ as 
(\ref{par}).
Note that
$$L_{\Sigma} \cap J^1\big( (a_-+\epsilon, a_+-\epsilon) \times \bbR \big)= \overline{\Sigma^{Mo}}.$$
Moreover, according to \cite{EK}, there is a natural bijective correspondence between rigid holomorphic disks with boundary on $L_{\Sigma}$ and rigid holomorphic disks with boundary on $\overline{\Sigma^{Mo}}$.
Combining with a result in \cite{EHK}, we know that rigid holomorphic disks with boundary on an exact Lagrangian cobordism $\Sigma$ are in $1-1$ correspondence with rigid holomorphic disks with boundary on $L_{\Sigma}$ that have positive (resp. negative) punctures at Reeb chords lying the slice $s=a_+$ (resp. $s= a_-$).
The proof of this result can be applied directly to the case of immersed exact Lagrangian submanifolds with cylindrical ends, 
where we only consider the rigid holomorphic disks with punctures on Reeb chords but no double points.
Hence we have the following result for a $2$-copy of $\Sigma$, denoted by  $\Sigma\cup \Sigma'$.
\begin{lem}\label{cobtoLeg}
Let $\Sigma$ and $\Sigma'$ be exact Lagrangian cobordisms from $\Lambda_-$ to $\Lambda_+$ and
from $\Lambda'_-$ to $\Lambda'_+$, respectively.
The Morse Legendrian $L_{\Sigma \cup \Sigma'}$ constructed above is a union of $L_{\Sigma}$ and $L_{\Sigma'}$.  
Moreover,
 rigid holomorphic disks with boundary on $\Sigma\cup \Sigma'$ that have positive (resp. negative) punctures at  Reeb chords  of $\Lambda_+ \cup \Lambda_+'$ (resp. $\Lambda_- \cup \Lambda_-'$) are in 1-1 correspondence with  rigid holomorphic disks with boundary on $L_{\Sigma} \cup L_{\Sigma'}$ that have positive (resp. negative) punctures at the Reeb chords lying in the slice $s=a_+$ (resp. $s=a_-$).  
\end{lem}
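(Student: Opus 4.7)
The plan is to obtain the lemma by applying the single‑cobordism construction that was just described to each of $\Sigma$ and $\Sigma'$ simultaneously, and then invoking two existing correspondences: the one from \cite{EHK} which relates holomorphic disks on an exact Lagrangian cobordism to holomorphic disks on the associated Morse Legendrian, and the one from \cite{EK} which relates holomorphic disks on a Legendrian submanifold of $J^1$ to those on its extension.

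First I would verify the statement that $L_{\Sigma \cup \Sigma'} = L_\Sigma \cup L_{\Sigma'}$. This is essentially by construction: the Morse Legendrian modification is local near the cylindrical ends, depending only on the data $(x_\pm(q), z_\pm(q))$ of each individual Legendrian end, and the interior modification from $\overline{\Sigma}$ to $\overline{\Sigma^{Mo}}$ is carried out componentwise on each connected component of the (possibly immersed) cobordism $\Sigma \cup \Sigma'$. Viewing $\Sigma \cup \Sigma'$ as a single immersed exact Lagrangian cobordism from $\Lambda_- \cup \Lambda_-'$ to $\Lambda_+ \cup \Lambda_+'$, the parametrization near $s = a_\pm$ described in (\ref{par}) is the disjoint union of the two parametrizations for $\Sigma$ and $\Sigma'$, and the extension outside $[a_-, a_+]$ is also componentwise. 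Hence $L_{\Sigma \cup \Sigma'} = L_\Sigma \cup L_{\Sigma'}$ as Legendrian submanifolds of $J^1(\bbR_{>0} \times \bbR)$.

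Next I would establish the moduli space correspondence in two steps. \emph{Step 1.} Apply the result from \cite{EHK} to the immersed cobordism $\Sigma \cup \Sigma'$ to obtain a bijective correspondence between rigid holomorphic disks with boundary on $\Sigma \cup \Sigma'$ (with positive punctures at Reeb chords of $\Lambda_+ \cup \Lambda_+'$ and negative punctures at Reeb chords of $\Lambda_- \cup \Lambda_-'$) and rigid holomorphic disks with boundary on $\overline{(\Sigma \cup \Sigma')^{Mo}}$ having positive punctures in the slice $s = a_+$ and negative punctures in the slice $s = a_-$. \emph{Step 2.} Apply \cite{EK} to the Morse Legendrian, giving a bijective correspondence between such rigid disks on $\overline{(\Sigma \cup \Sigma')^{Mo}}$ and rigid disks on the full extension $L_{\Sigma \cup \Sigma'} = L_\Sigma \cup L_{\Sigma'}$ with punctures at the same Reeb chords. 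Composing these two bijections gives the claim.

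The main obstacle is the adaptation in Step~1: the result in \cite{EHK} is stated for a single embedded exact Lagrangian cobordism, whereas we need it for an immersed one (the union $\Sigma \cup \Sigma'$ may have double points if the two cobordisms intersect). Here the key point is that we restrict attention to holomorphic disks whose punctures are Reeb chords only (no punctures at double points of $\Sigma \cup \Sigma'$), so the local analytic arguments from \cite{EHK} near each puncture — which are purely about asymptotics over the cylindrical ends — go through unchanged. The only global input needed is Gromov compactness and transversality for the moduli spaces of disks with punctures at Reeb chords, which holds for generic compatible almost complex structures regardless of whether the cobordism is embedded or immersed. Once this adaptation is recorded, the stated bijective correspondence follows immediately.
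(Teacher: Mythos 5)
Your proposal matches the paper's argument: the paper likewise obtains the lemma by composing the correspondence from \cite{EHK} (disks on the cobordism versus disks on the Morse Legendrian) with the one from \cite{EK} (disks on $\overline{\Sigma^{Mo}}$ versus disks on its extension $L_{\Sigma}$), and justifies the passage to the immersed union $\Sigma\cup\Sigma'$ exactly as you do, by observing that the proofs apply verbatim once one restricts to rigid disks with punctures at Reeb chords only and none at double points. The componentwise identification $L_{\Sigma\cup\Sigma'}=L_{\Sigma}\cup L_{\Sigma'}$ is likewise treated as immediate from the construction, so your write-up is correct and follows essentially the same route.
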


Note that the rigid holomorphic disks with boundary on $\Sigma\cup \Sigma'$ considered in Lemma \ref{cobtoLeg} are not all the rigid holomorphic disks since we did not talk about holomorphic disks with punctures at double points.
The disks we considered are the ones counted by $d_{+-}$.

In order to apply Lemma \ref{Legcrsp} to $L_{\Sigma \cup \Sigma'}$ and get the analog correspondences for exact Lagrangian cobordisms, we need to view $L_{\Sigma'}$ as the $1$-jet of a function  
$$\tilde{F}: L_{\Sigma} \to \bbR$$
 in the neighborhood of $L_{\Sigma}$ and show that $\tilde{F}$ is Morse.
To describe the function easily, pull it back to be a function $\Sigma \to \bbR$, denoted by $\tilde{F}$ as well.
Note that $\tilde{F}=F$ on $\Sigma \cap T^*\big( [a_-+\epsilon, a_+-\epsilon] \times \bbR\big)$.

Now let us focus on the part $s \in (a_+-\epsilon, \infty)$. 
Denote $L_{\Sigma} \cap J^1\big((a_+-\epsilon, \infty) \times \bbR \big)$ by $\partial_+(L_{\Sigma})$
and denote $\Sigma \cap T^*\big( (a_+-\epsilon, \infty) \times \bbR \big)$ by $\partial_+(\Sigma)$.
One can check that the Reeb chords from $\partial_+(L_{\Sigma})$ to $\partial_+(L_{\Sigma'})$ are in bijective correspondence with the Reeb chords from $\Lambda_+$ to $\Lambda_+'$ by
a property of Morse Legendrian.
As a result, the only critical points of $\tilde{F}$ on $\partial_+(\Sigma)$ are $(s,q)$, where $s= a_+$ and $f'_+(q)=0$.

Let $\pi_1$ and $\pi_2$ be the natural projections as follows:
$$
\xymatrixcolsep{0.5pc}
\xymatrixrowsep{3pc}
\xymatrix{ & {J^1(\bbR_{> 0}  \times \bbR_x)} \ar[ld]_{\pi_1} \ar[rd]^{\pi_2} & \\
{J^1(\bbR_{> 0})} &  & {J^1(\bbR_x})\\}
$$
First project $\partial_+(L_{\Sigma})$ and $\partial_+(L_{\Sigma'})$ to $J^1(\bbR_x)$.
We have
$$\pi_2(\partial_+(L_{\Sigma}))=(x_{+}(q), (A_+-(s-a_+)^2) y_{+}(q), (A_+-(s-a_+)^2)  z_{+}(q))$$ and
$$\pi_2(\partial_+(L_{\Sigma'}))=(x'_{+}(q), (A_+-(s-a_+)^2) y'_{+}(q), (A_+-(s-a_+)^2)  z'_{+}(q)).$$
Thus for fixed $s \in(a_+-\epsilon, \infty)$, we have $\tilde{F}(s,q)= (A_+-(s-a_+)^2)f_+(q)$, where $f_+= F\big\vert_{\{a_+\}\times \Lambda_+}$.
Second, project $\partial_+(L_{\Sigma})$ and $\partial_+(L_{\Sigma'})$ to $J^1(\bbR_{> 0})$. 
We have
$$\pi_1(\partial_+(L_{\Sigma}))=(s,-2(s-a_+) z_{+}(q), (A_+-(s-a_+)^2)  z_{+}(q)),$$ and
$$\pi_1(\partial_+(L_{\Sigma'}))=(s,-2(s-a_+) z'_{+}(q), (A_+-(s-a_+)^2)  z'_{+}(q)).$$
For a fixed $q\in \Lambda_+$,  the only non-degenerate singularity of $\tilde{F}(s,q)$ is $a_+$.
In particular, it is a local maximum since $z'_{+}(q)> z_{+}(q)$, which comes from the fact that $f_+>0$ as constructed in Section \ref{Pair}.
Therefore, we have 
$$Ind_{\tilde{F}}(a_+, q) =Ind_{f_+}(q)+1.$$
Similarly, on the negative side, denote $F\big\vert_{\{a_-\}\times \Lambda_-}$ as $f_-$.
The critical points of $\tilde{F}$ on $\Sigma \cap T^*\big((-\infty, a_-+\epsilon) \times \bbR\big)$ agree with the critical points of $f_-$ that lie in the slice $s=a_-$. 
Moreover, the indices satisfy $Ind_{\tilde{F}}(a_-, q) =Ind_{f_-}(q)$.
Hence $\tilde{F}$ is a Morse function.

Choose a Riemannian metric $g$ on $\Sigma$  and a generic almost complex structure $J$ on $\bbR\times \bbR^3$ that is adjusted to cylindrical ends such that 
the pair $(\tilde{F}, g, J)$ is adjusted to $L_{\Sigma}$.
Now we can apply Lemma \ref{Legcrsp} to the $2$-copy $L_{\Sigma} \cup L_{\Sigma'}$.

Define a {\bf generalized disk} to be a pair $(u, \gamma)$ consisting of a 
$J$-holomorphic disk $u$ with boundary on $\Sigma$ as defined in Section \ref{sectioncob} 
and a negative gradient flow line $\gamma$ of $\tilde{F}$ with one end on the boundary of $u$ and one end at a critical point $p$ of $\tilde{F}$ such that the boundary of $u$ intersect transversely  with  the negative gradient flow $\gamma$.
The point $p$ is called a {\bf negative Morse puncture} if the flow line $\gamma$ flows toward $p$,
and is called a {\bf positive Morse puncture} if $\gamma$ flows away from $p$.
The formal  dimension $\dim(u, \gamma)$ is defined by
\begin{equation}\label{dim}
\dim(u, \gamma)=
\begin{cases}
\displaystyle{\dim \calM +1 +Ind_{f}(p)-2 },& \mathrm{if} \ p  \textrm{ is a positive Morse puncture,}\\
{\dim \calM +1 - Ind_{f}}(p), & \textrm{if } p \textrm{ is a negative Morse puncture.}
\end{cases}  
\end{equation}

The generalized disk $(u, \gamma)$ is called {\bf rigid} if $\dim(u, \gamma)=0$.
We have the following result that is analogous to Lemma \ref{Legcrsp}.
\begin{thm}\label{cobcrsp}
Let $\Sigma$ be an exact Lagrangian cobordism in $\big(\bbR \times \bbR^3, d(e^t \alpha)\big)$ from $\Lambda_-$ to $\Lambda_+$ and
is cylindrical outside of $[-N+\delta, N-\delta] \times \bbR^3$. 
Let $F: \Sigma \to \bbR_{> 0}$ be a positive Morse function.
Push $\Sigma$ off through $F$ and get a new cobordism $\Sigma'$.

Denote $F\big\vert_{\{N\}\times\Lambda_{+}}$  by $f_+$ and $F\big\vert_{\{-N\}\times\Lambda_{-}}$  by $f_-$.
Define a new Morse function $\tilde{F}: \Sigma \to \bbR$ satisfying the following properties:
\begin{itemize}
\item The Morse function $\tilde{F}= F$ on $\Sigma \cap \big([-N+\delta, N-\delta] \times \bbR^3\big)$.
\item On $\Sigma \cap \big( (N-\delta, \infty)\times \bbR^3\big)$, all the critical points of  $\tilde{F}$ lie in $\Sigma \cap ( \{N\}\times \bbR^3)= \{N\}\times \Lambda_+$ and agree with the critical points of $f_+$. Moreover, at each critical point $c$, we have $Ind_{\tilde{F}}c= Ind_{f_+}c+1$;
\item On $\Sigma \cap \big(( -\infty, -N+\delta)\times \bbR^3\big)$, all the critical points of  $\tilde{F}$ lie in $\Sigma \cap ( \{-N\}\times \bbR^3)= \{-N\} \times \Lambda_-$ and agree with the critical points of $f_-$.
Moreover,  at each critical point $c$, we have $Ind_{\tilde{F}}c= Ind_{f_-}c$.
\end{itemize}
The Riemannian metric $g$ and almost complex structure $J$ are chosen as above.
Then we can describe the rigid holomorphic disks with boundary on $\Sigma \cup \Sigma'$ 
that have punctures on Reeb chords of $\Lambda_+\cup \Lambda_+'$ and $\Lambda_-\cup \Lambda_-'$
in terms of whether the Reeb chords are Morse or non-Morse as defined in Section \ref{augcat}.
\begin{enumerate}
\item Rigid holomorphic disks with boundary on $\Sigma \cup \Sigma'$ that have two punctures 
at  non-Morse mixed Reeb chords 
are in $1-1$ correspondence with
 rigid holomorphic disks with boundary on $\Sigma$. See Figure \ref{cobdiskcrsp} $(a)$. 
\item Rigid holomorphic disks with boundary on $\Sigma \cup \Sigma'$ that have exactly one puncture  
at a Morse Reeb chord
are in $1-1$ correspondence with
 rigid generalized disks $(u, \gamma)$ determined by $(\tilde{F}, g, J)$. See Figure \ref{cobdiskcrsp} $(b)$.
\item Rigid holomorphic disks with boundary on $\Sigma \cup \Sigma'$ with two punctures at Morse Reeb chords 
are in $1-1$ correspondence with
rigid negative gradient flows of the Morse function $\tilde{F}$ from a critical point on $\Lambda_+$ to a critical point on $\Lambda_-$.
\end{enumerate}

\end{thm}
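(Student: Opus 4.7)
The plan is to reduce the theorem to the analogous Legendrian statement of Lemma \ref{Legcrsp} by lifting the pair of cobordisms to a pair of Legendrian submanifolds and then invoking the bijective correspondence of Lemma \ref{cobtoLeg}.

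First, I would apply the Morse Legendrian construction described before Lemma \ref{cobtoLeg} to both $\Sigma$ and $\Sigma'$, obtaining Legendrian submanifolds $L_\Sigma$ and $L_{\Sigma'}$ in $J^1(\mathbb{R}_{>0} \times \mathbb{R})$. By Lemma \ref{cobtoLeg}, rigid $J$-holomorphic disks on $\Sigma \cup \Sigma'$ whose punctures are Reeb chords of $\Lambda_+\cup \Lambda_+'$ and $\Lambda_-\cup \Lambda_-'$ are in bijective correspondence with rigid $J$-holomorphic disks on $L_\Sigma \cup L_{\Sigma'}$ whose positive (resp.\ negative) punctures lie on Reeb chords at $s=a_+$ (resp.\ $s=a_-$). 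This transports the entire problem to the setting where Lemma \ref{Legcrsp} applies.

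Next I would show that $L_{\Sigma'}$ is obtained from $L_\Sigma$ by a pushoff through the $1$-jet of a Morse function $\tilde{F}\colon L_\Sigma \to \mathbb{R}$, so that $L_\Sigma \cup L_{\Sigma'}$ really is a $2$-copy in the sense needed for Lemma \ref{Legcrsp}. Pulling $\tilde{F}$ back to $\Sigma$, the computation carried out in the excerpt for the ends (using the explicit parametrization of the Morse Legendrian and the projections $\pi_1,\pi_2$) already identifies $\tilde{F}(s,q) = (A_\pm \mp (s-a_\pm)^2) f_\pm(q)$ on the cylindrical part. From this I would verify the three bulleted properties of $\tilde{F}$ in the statement: on the middle piece $\tilde{F} = F$ by construction, and on the ends the critical points localize to the slices $\{\pm N\} \times \Lambda_\pm$ with indices satisfying $\mathrm{Ind}_{\tilde F}(a_+, q) = \mathrm{Ind}_{f_+}(q) + 1$ at the positive end (extra direction comes from the local maximum in $s$) and $\mathrm{Ind}_{\tilde F}(a_-,q) = \mathrm{Ind}_{f_-}(q)$ at the negative end (local minimum in $s$ contributes no extra index).

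Now I would choose the Riemannian metric $g$ and the almost complex structure $J$ so that $(\tilde F, g, J)$ is adjusted to $L_\Sigma$ in the sense of \cite{EESduality}, and apply Lemma \ref{Legcrsp} to the pair $L_\Sigma \cup L_{\Sigma'}$. Because Morse Reeb chords of $L_\Sigma \cup L_{\Sigma'}$ at $s = a_\pm$ correspond precisely to Morse Reeb chords of $\Lambda_\pm^{(2)}$, and similarly for non-Morse mixed Reeb chords, the three correspondences of Lemma \ref{Legcrsp} translate verbatim into the three cases claimed here. For the dimension count (\ref{dim}), the shift $-2$ in the cobordism setting replaces the $-n$ in Lemma \ref{Legcrsp} with $n=2=\dim \Sigma$, and one must also absorb the index shift $\mathrm{Ind}_{\tilde F} = \mathrm{Ind}_{f_+} + 1$ at the positive end when passing between punctures at Morse Reeb chords of $\Lambda_+$ and critical points of $\tilde F$; the two shifts cancel consistently so that the formula in the statement is correct for both positive and negative Morse punctures.

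The main obstacle is the last bookkeeping step: making sure that under the translation from the Legendrian correspondence to the cobordism correspondence, the dimensions, orientations, and the distinction between positive/negative Morse punctures all match on each of the three cases, especially for disks that live near the boundary slices $s = a_\pm$ where the index shift between $f_\pm$ and $\tilde F$ differs. Once this is handled, the three statements of the theorem follow directly by composing the correspondence of Lemma \ref{cobtoLeg} with that of Lemma \ref{Legcrsp}.
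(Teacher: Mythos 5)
Your proposal is correct and follows essentially the same route as the paper's proof: compose the correspondence of Lemma \ref{cobtoLeg} (transporting disks on $\Sigma\cup\Sigma'$ to disks on $L_\Sigma\cup L_{\Sigma'}$) with Lemma \ref{Legcrsp} applied to the $2$-copy determined by $\tilde F$, then pull back to $\Sigma$ via the EHK correspondence, using the index localization ($\mathrm{Ind}_{\tilde F}\in\{1,2\}$ on the positive slice, $\{0,1\}$ on the negative slice) and the dimension formula to match rigidity on both sides. The verification that $\tilde F$ is Morse with the stated index shifts is, as you note, already carried out in the text preceding the theorem.
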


\begin{figure}[!ht]
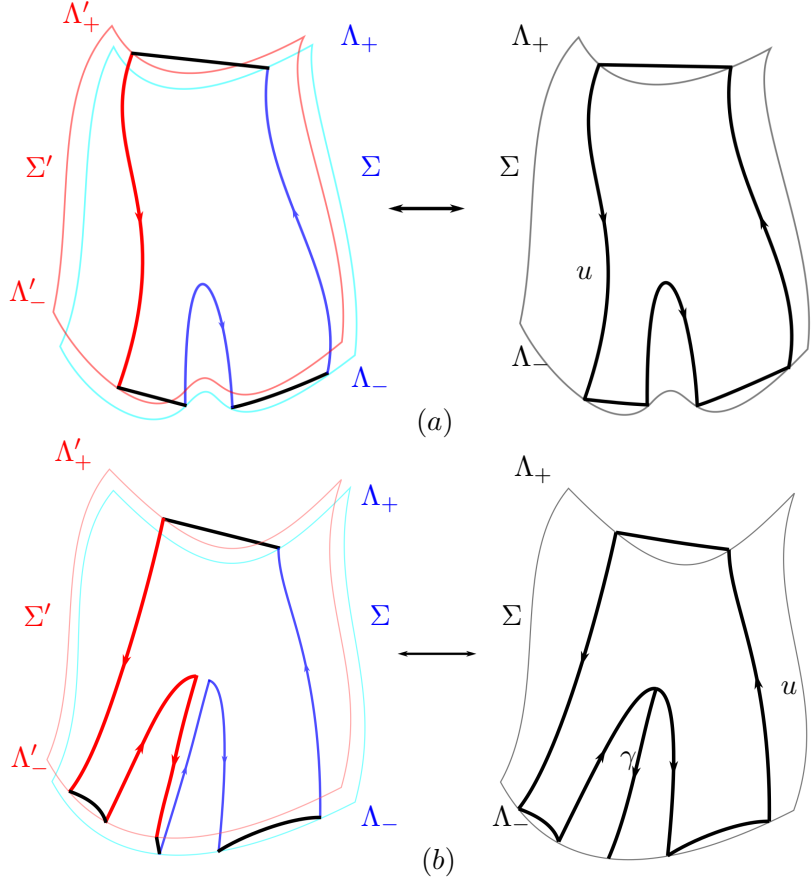

\labellist
\small
{\color{red}
\pinlabel $\Lambda_+'$ at 15 190
\pinlabel $\Sigma'$ at -5 120
\pinlabel $\Lambda_-'$ at -10 60
}
{\color{blue}
\pinlabel $\Sigma$ at  150 120
\pinlabel $\Lambda_+$ at 145 180
\pinlabel $\Lambda_-$ at 150 20
}
\pinlabel $\Sigma$ at  215 120
\pinlabel $\Lambda_+$ at 225 180
\pinlabel $\Lambda_-$ at 225 30
\pinlabel $u$ at 250 70
\pinlabel $(a)$ at 180 0
\endlabellist
\includegraphics[width=4in]{cobmixreeb}

\vspace{0.2in}

\labellist
\small

{\color{red}
\pinlabel $\Sigma'$ at -5 170
\pinlabel $\Lambda_+'$ at 20 280
\pinlabel $\Lambda_-'$ at -10 70
}
{\color{blue}
\pinlabel $\Sigma$ at  230 170
\pinlabel $\Lambda_+$ at 230 250
\pinlabel $\Lambda_-$ at 230 30
}
\pinlabel $\Sigma$ at  320 170
\pinlabel $\Lambda_+$ at 335 270
\pinlabel $\Lambda_-$ at 320 30
\pinlabel $\gamma$ at 400 70
\pinlabel $u$ at 510 120
\pinlabel $(b)$ at 270 0
\endlabellist
\includegraphics[width=4in]{cobcritical}
\vspace{0.2in}

\caption{ Schematic picture of the correspondences in Theorem \ref{cobcrsp}.
The arrows denote the orientation of holomorphic disks and the negative gradient flow line.}
\label{cobdiskcrsp}
\end{figure}
\begin{proof}

According to Lemma \ref{cobtoLeg}, rigid holomorphic disks with boundary on $\Sigma \cup \Sigma'$ that have two punctures at mixed Reeb chords correspond to  rigid holomorphic disks with boundary on $L_{\Sigma} \cup L_{\Sigma'}$ that have positive (resp. negative) boundary at  mixed Reeb chords  lying in the slice $s=a_+$ (resp. $s=a_-$).  
By Lemma \ref{Legcrsp},
these disks with boundary on $L_{\Sigma} \cup L_{\Sigma'}$ are in 1-1 correspondence with  holomorphic disks with boundary on $L_{\Sigma}$ that have positive (resp. negative) boundary at the Reeb chords lying in the slice $s=a_+$ (resp. $s=a_-$)  together with Morse flow lines of $\tilde{F}$.
If it is a rigid Morse flow line of $\tilde{F}$ on $L_{\Sigma}$, it flows from a critical point on $\Lambda_+$ to a critical point on $\Lambda_-$. Pull it back to be a flow line  $\Sigma$ and get the correspondence $(3)$.
If it is a rigid holomorphic disk with boundary on $L_{\Sigma}$ that has positive (resp. negative) boundary at the Reeb chords lying in the slice $s=a_+$ (resp. $s=a_-$), 
by \cite{EHK}, it corresponds to a rigid holomorphic disk with boundary on $\Sigma$, which is the respondence $(1)$.
Otherwise, it it is a rigid generalized disk $(u, \gamma)$ determined by $(\tilde{F}, g, J)$.
From the construction of $\tilde{F}$, one can note that all the critical points of $\tilde{F}$ on $\Sigma\cap(\{N\}\times \bbR^3)$ are of index $1$ or $2$ while all the critical points of $\tilde{F}$ on $\Sigma\cap(\{-N\}\times \bbR^3)$ are of index $0$ or $1$.
By the dimension formula (\ref{dim}), the generalized disk $(u, \gamma)$  is rigid if and only if $u$ is a rigid holomorphic disk.
Each rigid holomorphic disk $u$ with boundary on $L_{\Sigma}$ that has positive (resp. negative) boundary at the Reeb chords lying in the slice $s=a_+$ (resp. $s=a_-$) 
 in turn corresponds to a rigid holomorphic disk with boundary on $\Sigma$.
 Pulling $\gamma$ back to $\Sigma$ and get a rigid generalized disk on $\Sigma$ determined by $(\tilde{F}, g, J)$.
 Hence we get  the correspondence $(2)$.
\end{proof}

Recall that $f_1$ is defined algebraically as follows.
The exact Lagrangian cobordism $\Sigma$ from a Legendrian knot $\Lambda_-$ to a Legendrian knot $\Lambda_+$ induces a DGA map $\phi$ between the DGA's with a single base point
by counting rigid holomorphic disks with boundary on $\Sigma$:
$$\phi: (\calA(\Lambda_+), \partial) \to (\calA(\Lambda_-), \partial),$$
as described in Section \ref{sectioncob}.
This DGA map $\phi$ induces an $A_{\infty}$ category map 
$$f: \calA ug_+(\Lambda_-) \to \calA ug _+(\Lambda_+)$$
in the way described in Section \ref{augcat}.
Restrict the category map on the level of morphisms, we have
$$f_1:Hom_+(\epsilon^1_-, \epsilon^2_-) \to 
Hom_+(\epsilon^1_+, \epsilon^2_+).$$
See calculation (\ref{f_1}) for the explicit formula.

\begin{thm}\label{equal}
With a choice of  Morse function $F:\Sigma \to \bbR$, we have $d_{+-}=f_1$.
\end{thm}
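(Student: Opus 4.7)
The plan is to verify $d_{+-}=f_1$ by evaluating both maps on each generator of $Hom_+(\epsilon^1_-,\epsilon^2_-)$, using the geometric dictionary from Theorem \ref{cobcrsp} to rewrite $d_{+-}$ as a count of honest holomorphic disks on $\Sigma$ together with Morse data, and then matching term by term against the explicit formula (\ref{f_1}) for $f_1$. The generators of $Hom_+(\epsilon^1_-,\epsilon^2_-)$ split into three types: the duals $c^{\vee}$ of non-Morse Reeb chords of $\Lambda_-$, and the two Morse generators $y_-^{\vee}$ and $x_-^{\vee}$. For each type I first determine which positive punctures in $\calR^{12}_+$ are admissible in a rigid disk contributing to $d_{+-}$. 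The positive energy condition is the essential input: Morse Reeb chords have action bounded by $O(\eta^N)$ while non-Morse Reeb chords have action bounded below independently of $\eta$, so for $\eta$ sufficiently small a non-Morse negative puncture forces the positive puncture to be non-Morse as well.

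For a non-Morse generator $c^{\vee}$, I would invoke part $(1)$ of Theorem \ref{cobcrsp}. The rigid disks on $\Sigma^1\cup\Sigma^2$ with non-Morse positive puncture $a^+_j$ and negative puncture at $c$ correspond bijectively to rigid $J$-holomorphic disks on $\Sigma$ with positive puncture at $a_j$ containing $c$ among their negative punctures, with pure Reeb chord punctures on $\Lambda_-^1$ and $\Lambda_-^2$ of the Cthulhu disk becoming pure Reeb chord punctures on $\Lambda_-$ of the underlying disk on $\Sigma$. Summing over such disks with the two augmentation weights,
$$\sum_{{\bf p},{\bf q}}\textrm{Coeff}_{{\bf p} c {\bf q}}(\phi(a_j))\,\epsilon^1_-({\bf p})\,\epsilon^2_-({\bf q})=\textrm{Coeff}_c(f^{(2)}_{\epsilon_-}(a_j)),$$
which matches the coefficient of $a_j^{\vee}$ in $f_1(c^{\vee})$.

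For $y_-^{\vee}$, note that $\textrm{Ind}_{\tilde F}(y_-)=0$, so the dimension formula (\ref{dim}) forces any generalized disk with negative Morse puncture at $y_-$ to have $\dim\calM=-1$: correspondence $(2)$ contributes nothing. Only correspondence $(3)$ applies, counting rigid negative gradient flow lines of $\tilde F$ from an index-$1$ critical point on $\Lambda_+$ to $y_-$. By construction of $F^\eta$ the only such critical point on $\Lambda_+$ is $y_+$, and the unique canonical flow line between them runs along (a neighborhood of) the base-point-connecting arc on $\Sigma$, yielding $d_{+-}(y_-^{\vee})=y_+^{\vee}$, matching $f_1(y_-^{\vee})$.

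For $x_-^{\vee}$ (with $\textrm{Ind}_{\tilde F}(x_-)=1$), both correspondences $(2)$ and $(3)$ contribute. Correspondence $(3)$ gives the unique rigid flow from $x_+$ (index $2$) to $x_-$, producing the $x_+^{\vee}$ summand. Correspondence $(2)$ produces generalized disks $(u,\gamma)$ where $u$ is a rigid disk on $\Sigma$ with positive puncture at a non-Morse $a^+_j$ and $\gamma$ is a negative gradient flow of $\tilde F$ from a boundary point of $u$ ending at $x_-$. The key identification is that $\gamma$ ending at $x_-$ is precisely the condition that $\partial u$ meets the stable manifold of $x_-$, which is dual to the base-point-connecting curve $\alpha$ used to define the subgroup $V\subset H_1(\Sigma)$ in Section \ref{sectioncob}; so each such crossing contributes a factor of $t$ to the DGA map $\phi(a_j)$. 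Summing these generalized disks with augmentation weights produces exactly $\sum_{{\bf p},{\bf q}}\textrm{Coeff}_{{\bf p} t {\bf q}}(\phi(a_j))\,\epsilon^1_-({\bf p})\,\epsilon^2_-({\bf q})=\textrm{Coeff}_t(f^{(2)}_{\epsilon_-}(a_j))$, reproducing the nontrivial part of $f_1(x_-^{\vee})$. The main obstacle is exactly this third case: carefully relating a gradient flow line of $\tilde F$ hitting $x_-$ to an intersection with the curve $\alpha$ so that the geometric disk count agrees with the algebraic $t$-coefficient extracted from $\phi$, together with checking that the coherent orientations on the two sides match. Once this is done the three cases combine to give $d_{+-}=f_1$.
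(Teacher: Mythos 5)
Your proposal is correct and follows essentially the same route as the paper's proof: a generator-by-generator check using the three correspondences of Theorem \ref{cobcrsp}, the energy bound to rule out Morse outputs from non-Morse inputs, the dimension formula to kill the index-$0$ case for $y_-^{\vee}$, and the identification of the gradient flow into $x_-$ with the base-point-connecting arc $\alpha$ to match the $t$-coefficient of $\phi$. The paper implements your last step by explicitly choosing $F$ so that the flow line from $x_+$ to $x_-$ runs parallel to $\alpha$ with matching orientation, which is exactly the "main obstacle" you flag.
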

\begin{proof}
We show $d_{+-}=f_1$ by checking their images on generators of $Hom_+(\epsilon^1_{-},\epsilon^2_{-})$.
Recall that
$Hom_+(\epsilon^1_{-},\epsilon^2_{-})$ are generated by  the elements in
$ Hom_-(\epsilon^1_{-},\epsilon^2_{-})$ that correspond to non-Morse Reeb chords and the elements in
$T=\{x_{-}^{\vee},y_{-}^{\vee}\}$ that correspond to Morse Reeb chords, respectively. 

First consider the element $b^{\vee}$ in $Hom_-(\epsilon_-^1, \epsilon_-^2)$.
Notice that Morse Reeb chords  are much shorter than non-Morse Reeb chords.
The energy restriction ensures that $d_{+-}(b^{\vee})$ does not include any element in $T$.
Therefore $d_{+-}$ sends $b^{\vee}$ to $a^{\vee} \in Hom_-(\epsilon^1_+,\epsilon^2_+)$
by counting rigid holomorphic disks $u \in \calM(a^{12}; {\bf p^{11}}, b^{12}, {\bf q^{22}})$  with boundary on $\Sigma^1 \cup \Sigma^2$,
 where ${\bf p^{11}}$ and ${\bf q^{22}}$ are words of pure Reeb chords of $\Lambda^1_-$ and $\Lambda^2_-$, respectively.
According to the correspondence  $(1)$ in Theorem \ref{cobcrsp},
these disks correspond to rigid holomorphic disks $u \in  \calM(a; {\bf p}, b, {\bf q})$ with boundary on $\Sigma$ (as shown in Figure \ref{mixreeb}), 
which are the disks counted in $f_1$.
Notice that both $d_{+-}$ and $f_1$  sends $b^{\vee}$ to $|\calM(a; {\bf p}, b, {\bf q})| \epsilon^1_-({\bf p}) \epsilon^2_-({\bf q}) a^{\vee}$,
where $|\calM(a; {\bf p}, b, {\bf q})|$ is the number of rigid disks in $\calM$ counted with sign. 
Hence the definition of $d_{+-}$ matches the definition of $f_1$ on $Hom_-(\epsilon^1_-,\epsilon^2_-)$.

\begin{figure}[!ht]
\labellist
\small
\pinlabel $a^{12}$ at  68 135
\pinlabel $a$ at  260 135
\pinlabel ${\bf p^{11}}$ at 15 -5
\pinlabel ${\bf p}$ at 210 -5
\pinlabel $b^{12}$ at 65 -5
\pinlabel $b$ at 255 -5
\pinlabel $\bf q^{22}$ at 115 -5 
\pinlabel $\bf q$ at 305 -5
{\color{red} 
\pinlabel $\Sigma_1$ at 10 80
}
{\color{blue}
\pinlabel $\Sigma_2$ at  120 80
}
\pinlabel $\Sigma$ at  205 80

\endlabellist
\includegraphics[width=4in]{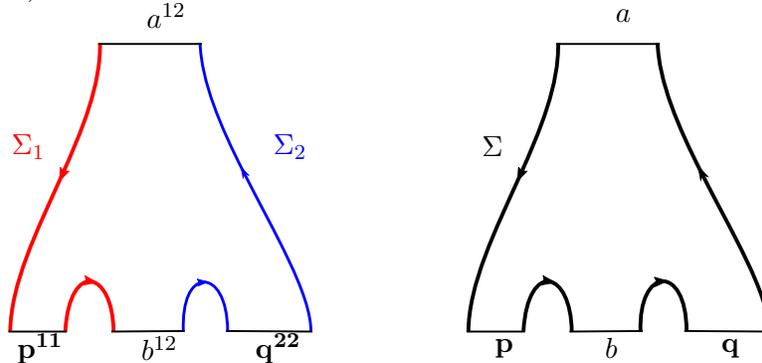}
\caption{ The disk on the left is counted in $d_{+-}$ while the disk on the right is counted in $\phi$.}
\label{mixreeb}
\end{figure}

In order to simplify the map $d_{+-}$,
we can choose a Morse function $F$ such that
the negative gradient flow of $F$ flows from $\ast_{+}$  directly to $\ast_-$ without going through any critical points.
We can further require that the negative gradient flow of  $F$ behave the same in a collar neighborhood of the flow line from $\ast_{+}$ to $\ast_-$ as shown in Figure \ref{strip}.
As $x_{\pm}$ and $y_{\pm}$ sit right besides $\ast_{\pm}$,
the negative gradient flow lines of $\tilde{F}$ flow from $x_{+}$ and $y_{+}$ directly to $x_-$ and $y_{-}$, respectively.

\begin{figure}[!ht]
\labellist
\small
\pinlabel $\Lambda_+$  at 145 155
\pinlabel $\Lambda_-$  at 175 25
\pinlabel $\Sigma$  at 160 85
\pinlabel $\ast_+$  at 41 143
\pinlabel $x_+$ at 55 140
\pinlabel $y_+$ at 69 137
\pinlabel $\ast_-$  at 26 10
\pinlabel $\alpha$  at 25 80
\pinlabel $x_-$ at 40 5
\pinlabel $y_-$ at 54 0
\endlabellist

\includegraphics[width=1.7in]{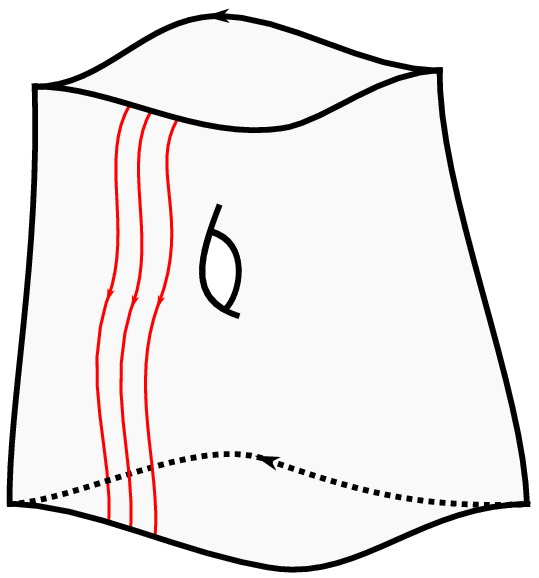}
\caption{An example of  Morse flows of $F$.}
\label{strip}
\end{figure}

For the element $c^{\vee} \in T$, the map $d_{+-}$ counts the rigid holomorphic disks in $\Sigma_1 \cup \Sigma_2$ that have a negative puncture at the Morse Reeb chord $c$.
For the rigid disk that has a positive puncture at a Morse Reeb chord as well, 
according to the  correspondence $(3)$ in Theorem \ref{cobcrsp},
it  corresponds to a rigid Morse flow line of $\tilde{F}$.
The indices of $\tilde{F}$ on $y_-$, $x_-$, $y_+$ and $x_+$ are $0$, $1$, $1$ and $2$ respectively.
Therefore $d_{+-}(x_-^{\vee})$ has $x_+^{\vee}$ as a term and $d_{+-} (y_-^{\vee})$ has $y_+^{\vee}$ as a term.
If the rigid disk that has a positive puncture at a non-Morse mixed Reeb chord $a^{12}$,
we denote it by  $u \in \calM(a^{12}; {\bf p^{11}}, c^{12}, {\bf q^{22}})$.
By the correspondence $(2)$ in Theorem \ref{cobcrsp},
it corresponds to a rigid generalized disk $(u, \gamma)$,
 where $u \in \calM(a; {\bf p}, {\bf q})$ is a holomorphic disk with boundary on $\Sigma$ and
 $\gamma$ is a Morse flow of $\tilde{F}$ that flows towards $c$
(see Figure \ref{critical}).
 Due to the dimension formula (\ref{dim}) of generalized disks,
 no rigid disk has a negative puncture at $y_-$ since $Ind_{\tilde{F}} y_-=0$ but $\dim \calM \ge 0$.
 Hence $d_{+-} (y_-^{\vee}) = y_+^{\vee}$, which matches the definition of $f_1$ on $y_k^-$.

\begin{figure}[!ht]
\labellist
\small
\pinlabel $a^{12}$ at 70 135
\pinlabel $a$ at 260 135
\pinlabel $p^{11}$ at 15 -5
\pinlabel $c^{12}$ at 65 -5
\pinlabel $q^{22}$ at 115 -5
\pinlabel $p$ at 210 -5
\pinlabel $c$ at 255 -5
\pinlabel $q$ at 310 -5
{\color{red}
\pinlabel $\Sigma_1$ at 10 80
}
{\color{blue}
\pinlabel $\Sigma_2$ at 115 80
}
\pinlabel $\Sigma$ at 205 80

\pinlabel $\gamma$ at 265 35
\endlabellist
\includegraphics[width=4in]{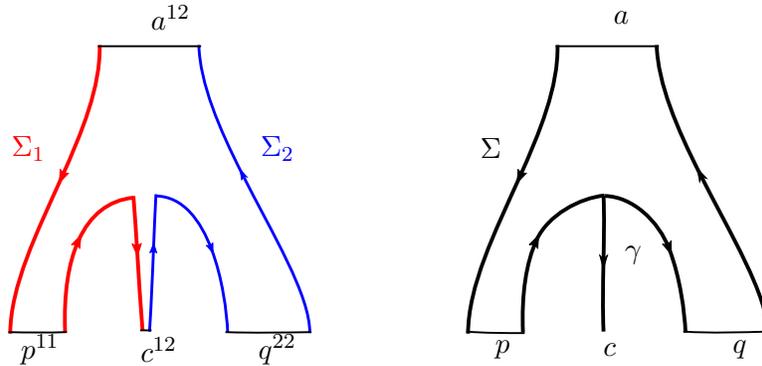}
\caption{The disk on the left is counted in $d_{+-}$ while the disk on the right is counted by $\phi$.}
\label{critical}
\end{figure}

For the element $x_-^{\vee}$, 
we know that $f_1(x_+^{\vee})$ counts the element $a^{\vee}$ if $t$ shows up in the image of the DGA map $\phi(a)$.
In other words,
there exists a rigid holomorphic disk $u \in \calM(a; {\bf p }, {\bf q})$ with boundary on $\Sigma$, where ${\bf p }$ and ${\bf q}$ are words of pure Reeb chords of $\Lambda_-$, such that $u$ has a nontrivial intersection number with  $\alpha$, where $\alpha$ is the curve from the base point $\ast_+$ to $\ast_-$.
Each rigid holomorphic disk $u$ contributes to $f_1(x_+^{\vee})$ a term of $a^{\vee}$ with  coefficient $s(u, \alpha)\epsilon_-^1({\bf p}) \epsilon^2_-(\bf{q})a^{\vee},$ 
where $s(u, \alpha)$ is the intersection number of the boundary of $u$ and $\alpha$.
We can make the Morse function $F$ satisfy the property that the
negative gradient flow line $\gamma$ of $F$ from $x_+$ to $x_-$  is parallel to $\alpha$ and of the same orientation.
For each intersection point $p_i$ of the boundary of $u$ and  $\gamma$, denote the part of $\gamma$ from $p_i$ to $c$ by $\gamma_{i}$.
By the correspondence $(3)$ in Theorem \ref{cobcrsp},
the rigid generalized disk $(u, \gamma_i)$ corresponds to a rigid holomorphic disk in $\calM(a^{12}; {\bf p^{11}}, c^{12}, {\bf q^{22}})$ with boundary on $\Sigma^1\cup\Sigma^2$,
and hence contributes to  $d_{+-}(x_-^{\vee})$ with a term $s(u, \gamma_i)\epsilon_-^1({\bf p^{11}}) \epsilon^2_-(\bf{q^{22}})a^{\vee},$ 
where  
$s(u, \gamma_i)$ is  the sign of the intersection.
Sum over all the intersections of  the boundary of $u$ and $\gamma$,
the rigid holomorphic disk $u$ contributes $s(u, \gamma)\epsilon_-^1({\bf p}) \epsilon^2_-(\bf{q})a^{\vee}$ to  $d_{+-}(x_-^{\vee})$, where $s(u, \gamma)$ is the 
  intersection number of   the boundary of $u$ and $\gamma$.
Therefore, we have 
$d_{+-}=f_1$  on $x_-^{\vee}$.
\end{proof}

\subsection{Aside.}\label{aside}
In this section, we  describe  the differential map of the Cthulhu chain complex in terms of holomorphic disks with boundary on $\Sigma$ and Morse flow lines.
This allows us to recover the long exact sequences in \cite{CDGG}.
The theorem in this section is stated without rigorous proof. 
But it will not be used in the other part of the paper.

In Section \ref{geointer}, we only need  to describe  the rigid disks with boundary on $\Sigma^1 \cup \Sigma^2$ that have punctures at  Reeb chords. 
Hence we only have correspondences for those types of disks. 
However, the method should work for all the rigid holomorphic disks with boundary on $\Sigma^1\cup \Sigma^2$ including the disks counted by $d_{+0}$ and $d_{0-}$.
We state the following theorem without proof.

\begin{thm}
Let $\Sigma$ be an exact Lagrangian cobordism from $\Lambda_-$ to $\Lambda_+$
and $\Sigma^1\cup \Sigma^2$ be a $2$-copy of $\Sigma$ as constructed in Section \ref{Pair}.
For $i=1,2$, assume $\epsilon^i_-$ is an augmentation of $\calA(\Lambda_-)$ and $\epsilon^i_+$ is the augmentation of $\calA(\Lambda_+)$ induced by $\Sigma$.
For $\eta$ small enough,
 the Cthulhu chain complex can be decomposed into five parts:
$$Cth^k(\Sigma^1, \Sigma^2)=
Hom_-^{k-1}(\epsilon^1_+,\epsilon^2_+) \oplus
C_{Morse}^{k-1} f_+ \oplus C_{Morse}^k F  \oplus Hom^k_-(\epsilon^1_-,\epsilon^2_-) \oplus C_{Morse}^k f_-.$$
Under this decomposition, the differential can be written as 
$$d=
\begin{pmatrix}
	m_1& d_{+f_{+}} & d_{+F} & d_{+-} & d_{+f_-} \\
	0 & d_{f_+} & d_{f_+ F} & 0 & d_{f_+ f_-}\\
	0 & 0 & d_F & 0 &  d_{F f_-} \\
	0 &  0 & 0 &  m_1 & d_{-f_-}\\
	0 & 0 & 0 & 0 & d_{f_-}\\
	\end{pmatrix}
$$
Moreover,
\begin{enumerate}
\item The holomorphic disks counted by $d_{+F}$ and $d_{+f_-}$ are in $1-1$ correspondence with rigid generalized disks on $\Sigma$ determined by $(\tilde{F}, g, J)$.
\item The holomorphic disks counted by $d_{+-}$ are in $1-1$ correspondence with rigid holomorphic disks with boundary on $\Sigma$.
\item The holomorphic disks counted by $d_{f_+F}$, $d_{f_+f_-}$ and $d_{Ff_-}$  are in $1-1$ correspondence with  rigid Morse flow lines of $\tilde{F}$. 
\end{enumerate}
\end{thm}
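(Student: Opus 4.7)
The plan is to refine the three-block decomposition of $(Cth(\Sigma^1,\Sigma^2), d)$ from the theorem in Section \ref{LES} by splitting the top and bottom $Hom_+$ summands into their non-Morse and Morse pieces. At each end the long exact sequence (\ref{lesknot}) realizes $Hom_-^*$ as a sub-chain complex of $Hom_+^*$ with quotient the Morse co-chain complex of $f_\pm$. Applying this to $Hom_+^{k-1}(\epsilon^1_+,\epsilon^2_+)$ and $Hom_+^k(\epsilon^1_-,\epsilon^2_-)$ gives the claimed five-piece decomposition on the level of graded vector spaces, and the sub/quotient structure at each end already pins down the two $2\times 2$ diagonal corner blocks, including their upper-triangular shape, the diagonal entries $m_1, d_{f_+}$ and $m_1, d_{f_-}$, and the connecting maps $d_{+f_+}$ and $d_{-f_-}$.

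Next I would establish the remaining zeros by a refined action filtration. With the $F^\eta$ of Section \ref{Pair} and $\eta$ sufficiently small, every Morse-type generator (a Morse Reeb chord at either end, or an intersection point of $\Sigma^1\cap\Sigma^2$) has action of order $\eta^N e^N$ or smaller, every non-Morse mixed Reeb chord on the bottom has action at least $e^{-N}L_{\min}$, and every non-Morse mixed Reeb chord on the top has action at least $e^N L_{\min}$, where $L_{\min}$ is the shortest pure Reeb chord length of $\Lambda$. For $\eta$ small enough these bounds are strictly ordered (Morse-type $<$ non-Morse bottom $<$ non-Morse top), so the positive energy condition $\fraka(c_0)>\sum_i\fraka(c_i)$ forces every would-be contribution strictly below the claimed $5\times 5$ upper-triangular pattern to be empty, giving the displayed form of $d$.

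With the block structure secured, I would identify the nontrivial off-diagonal entries by running the block-by-block analysis of Theorem \ref{equal} using Theorem \ref{cobcrsp}. Row $1$ splits its original $d_{+-}$ component according to the type of negative puncture: Theorem \ref{cobcrsp}(1) matches $d_{+-}$ with rigid holomorphic disks on $\Sigma$, while Theorem \ref{cobcrsp}(2) matches the new entries $d_{+F}$ and $d_{+f_-}$ with rigid generalized disks $(u,\gamma)$ on $\Sigma$ whose gradient flow of $\tilde F$ terminates at an interior critical point or at a critical point of $f_-$. The remaining Morse block in rows and columns $\{2,3,5\}$ comes from Theorem \ref{cobcrsp}(3): the entries $d_{f_+F}, d_{f_+f_-}, d_{Ff_-}$ each correspond to rigid gradient flow lines of $\tilde F$ between critical points of the appropriate types, and together with the diagonal $d_{f_+}, d_F, d_{f_-}$ (already identified in the earlier theorem and in Section \ref{augcat}) they assemble into exactly the Morse co-chain complex of $\tilde F$ on $C^{k-1}_{Morse}f_+\oplus C^k_{Morse}F\oplus C^k_{Morse}f_-$.

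The main obstacle is that Theorem \ref{cobcrsp} as stated only handles holomorphic disks whose punctures lie at Reeb chords of the cylindrical ends, not at intersection points of $\Sigma^1\cap\Sigma^2$; to capture the entries $d_{+F}, d_{f_+F}, d_{Ff_-}$ one must extend the correspondence by passing to the Morse Legendrian lift $L_{\Sigma}\cup L_{\Sigma'}$, where interior critical points of $\tilde F$ are realized as additional Morse Reeb chords of the $2$-copy so that Lemma \ref{Legcrsp} applies uniformly. Verifying that $\tilde F$ can simultaneously be arranged to agree with $F^\eta$ on the compact slab, to have its boundary critical points sit over those of $f_\pm$ with the correct grading shifts, and to be adjusted to a fixed almost complex structure, together with the ensuing SFT-style compactness and sign propagation, is the substantive technical content that Section \ref{aside} leaves unwritten.
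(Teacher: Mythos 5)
The paper states this theorem explicitly without proof --- Section \ref{aside} opens by saying the correspondence of Section \ref{geointer} ``should work'' for disks with punctures at double points and that ``the theorem in this section is stated without rigorous proof'' --- so there is no argument in the paper to compare against. Your outline (refining the three-block decomposition via the sub/quotient structure $Hom_-\subset Hom_+$ at each end, killing the extra lower-triangular entries with the same action estimates used in Section \ref{LES}, and identifying the off-diagonal entries via Theorem \ref{cobcrsp}) is exactly the strategy the paper gestures at, and you have correctly isolated the one genuinely unproven ingredient, namely extending the disk correspondence to punctures at intersection points of $\Sigma^1\cap\Sigma^2$ (the analogue of the conjectural Lemma 4.11 of \cite{Ekh} cited there), which is precisely why the author leaves the theorem unproved.
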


This theorem is similar to the conjectural analytic Lemma in \cite[Lemma 4.11]{Ekh}, which describes the correspondence in the case of  exact Lagrangian fillings.

We next discuss how to recover the three long exact sequences in \cite{CDGG} from this chain complex.
\begin{enumerate}
\item
Decompose the Cthulhu chain complex as
$$Hom_-^{k-1}(\epsilon^1_+,\epsilon^2_+)\oplus (C_{Morse}^{k-1}f_+\oplus C_{Morse}^kF)\oplus Hom_+^{k}(\epsilon^1_-,\epsilon^2_-).$$
Notice that the chain complex 
$$\left(C_{Morse}^{k-1}f_+\oplus C_{Morse}^kF, \ 
\begin{pmatrix}
d_{f_+} & d_{f_+ F}\\
0 & d_F\\
\end{pmatrix}
\right)$$
can be identified with the Morse co-chain complex $(C^k_{Morse} \bar{F}, d_{\bar{F}})$ induced by a Morse function $\bar{F}$, where $\bar{F}$ agrees with $\tilde{F}$ near  $\Lambda_+$ and agrees with $F$ for the rest part.
Hence $$H^*(C^k_{Morse} \bar{F}, d_{\bar{F}})= H^k(\Sigma, \Lambda_+ \cup \Lambda_-).$$
Therefore, we have the following long exact sequence:

{
\xymatrixcolsep{0.8pc}
\xymatrix{\cdots \arr & H^{k}(\Sigma, \Lambda_+\cup \Lambda_-)\arr&  H^{k}Hom_-(\epsilon^1_+,\epsilon^2_+) \arr & H^{k}Hom_+(\epsilon^1_-,\epsilon^2_-) \arr  & H^{k+1}(\Sigma, \Lambda_+\cup \Lambda_-)\arr&\cdots,}
}
which is  Theorem 1.5 of \cite{CDGG}.
\item 
View  the Cthulhu chain complex as a direct sum of $Hom_-^{k-1}(\epsilon^1_+,\epsilon^2_+)$,  
$
C_{Morse}^{k-1}f_+\oplus C_{Morse}^k F \oplus Hom^k_-(\epsilon^1_-,\epsilon^2_-)$ and 
the Morse co-chain complex $C_{Morse}^k f_-$.
We have the following long exact sequence:

{
\xymatrixcolsep{0.8pc}
\xymatrix{\cdots \arr & H^{k-1}( \Lambda_-) \arr &H^{k}(\Sigma, \Lambda_+\cup \Lambda_-) \oplus H^{k}Hom_-(\epsilon^1_-,\epsilon^2_-) \arr &  H^{k}Hom_-(\epsilon^1_+,\epsilon^2_+) \arr&
 H^{k}( \Lambda_-) \arr &\cdots,}
}
which is Theorem 1.6 in \cite{CDGG}.
\item Rewrite the Cthulhu chain complex as
$$
\begin{array}{rl}
Cth^k(\Sigma^1, \Sigma^2) & =
Hom_-^{k-1}(\epsilon^1_+,\epsilon^2_+) \oplus Hom^k_-(\epsilon^1_-,\epsilon^2_-) \oplus
C_{Morse}^{k-1} f_+\oplus C_{Morse}^k F  \oplus C_{Morse}^k f_-\\
&\\
&= Hom_-^{k-1}(\epsilon^1_+,\epsilon^2_+) \oplus Hom^k_-(\epsilon^1_-,\epsilon^2_-) \oplus
C^k_{Morse} \tilde{F}
\end{array}$$
with the differential
$$d=
\begin{pmatrix}
	m_1& * &* \\
	0 & m_1 & *\\
	0 & 0 & d_{\tilde{F}} \\
	\end{pmatrix}.
$$
We have the long exact sequence:

{
\xymatrixcolsep{1pc}
\xymatrix{\cdots \arr & H^{k}Hom_-(\epsilon^1_-,\epsilon^2_-) \arr& H^{k}Hom_-(\epsilon^1_+,\epsilon^2_+) \arr & H^{k+1}(\Sigma, \Lambda_+) \arr & H^{k+1}Hom_-(\epsilon^1_-,\epsilon^2_-) \arr &\cdots,}
}
which is Theorem 1.4 in \cite{CDGG}.
\end{enumerate}

One may get other long exact sequences from the Cthulhu chain complex above. 
One example is : 

{
\xymatrixcolsep{1pc}
\xymatrix{\cdots \arr & H^{k}Hom_-(\epsilon^1_-,\epsilon^2_-) \arr& H^{k}Hom_+(\epsilon^1_+,\epsilon^2_+) \arr & H^{k}(\Sigma) \arr & H^{k+1}Hom_-(\epsilon^1_-,\epsilon^2_-) \arr &\cdots,}
}
which is obtained by decomposing the Cthulhu chain complex as  a direct sum of $Hom_+^{k-1}(\epsilon^1_+,\epsilon^2_+)$,
$Hom^k_-(\epsilon^1_-,\epsilon^2_-)$ and $C^k_{Morse} F\oplus C^k_{Morse} f_-$.

\subsection{Injectivity}\label{injection}
Theorem \ref{equal} implies that $d_{+-}$ is a chain map and hence
it gives the Cthulhu chain complex a stronger algebraic structure. 
In this section, we use these algebraic information to 
 deduce that  the augmentation category map
  induced by the exact Lagrangian cobordism $\Sigma$ 
  is injective on the level of equivalence classes of objects.
  And its induced map
  on the  cohomology category $H^*\calA ug_+$ is faithful.

Notice that $d_{+-} = f_1$ implies that $d_{+-}$ is a chain map and thus induces maps $d_{+-}^k: H^k Hom_+(\epsilon^1_-, \epsilon^2_-) \to H^k Hom_+(\epsilon^1_+, \epsilon^2_+)$ for $k\in \bbZ$.
Then we have the following theorem deduced from the double cone structure of the Cthulhu chain complex.
We would like to thank the referee for pointing out this theorem.

\begin{thm}\label{thm 1}
Let $\Sigma$ be an exact Lagrangian cobordism from a Legendrian knot $\Lambda_-$ to a Legendrian knot $\Lambda_+$ with Maslov number $0$.
For $i=1,2$, assume $\epsilon^i_-$ is an augmentation of $\calA(\Lambda_-)$ and $\epsilon^i_+$ is the  augmentation of $\calA(\Lambda_+)$ induced by $\Sigma$.
With the same choice of Morse function as in Theorem \ref{equal}, we have the following statement.

For fixed $k \in \bbZ$,
the map $$i^k: H^k Hom_+(\epsilon^1_+, \epsilon^2_+) \to H^k Hom_+(\epsilon^1_-, \epsilon^2_-)$$ 
in the long exact sequence (\ref{les1}) is injective (resp. surjective) if and only if
the map $$d_{+-}^k: H^k Hom_+(\epsilon^1_-, \epsilon^2_-) \to H^k Hom_+(\epsilon^1_+, \epsilon^2_+)$$ 
is surjective (resp. injective).
\end{thm}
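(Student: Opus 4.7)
The plan is to derive a chain-level identity linking $i^k$ and $d_{+-}^k$ directly from the acyclicity of $Cth(\Sigma^1,\Sigma^2)$ and the strict chain-map property $d_{+-}=f_1$ provided by Theorem \ref{equal}, and then read off all four implications from the exactness of the long exact sequence (\ref{les1}).

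First I would translate $i^k$ into a chain-level recipe via acyclicity. Given a cocycle $\alpha\in Hom_+^k(\epsilon^1_+,\epsilon^2_+)$, the triple $(\alpha,0,0)\in Cth^{k+1}$ is a $Cth$-cocycle, and since $H^\ast Cth=0$ there exists $(\gamma,\delta,\beta)\in Cth^k$ with $d(\gamma,\delta,\beta)=(\alpha,0,0)$. Unpacking by rows gives $m_1\beta=0$, $d_F\delta=-d_{0-}\beta$, and $m_1\gamma+d_{+0}\delta+d_{+-}\beta=\alpha$, and a quick diagram chase identifies $[\beta]\in H^kHom_+(\epsilon^1_-,\epsilon^2_-)$ as exactly $i^k[\alpha]$.

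The heart of the argument is the direct computation
\[
d_{+-}^k\circ i^k[\alpha]=[d_{+-}\beta]=[\alpha]-[d_{+0}\delta].
\]
A careful use of the $(+,-)$ entry of $d^2=0$ together with the chain-map identity $m_1d_{+-}=d_{+-}m_1$ coming from Theorem \ref{equal} shows that $d_{+0}d_{0-}$ annihilates $m_1$-cocycles, so $d_{+0}\delta$ is itself an $m_1$-cocycle; moreover, writing $(d_{+0}\delta,0,0)$ explicitly as a $Cth$-boundary whose $Hom_+(\epsilon_-)$-component vanishes exhibits $[d_{+0}\delta]\in\mathrm{image}(d_{+0}^*)=\ker(i^k)$. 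This yields the key identity $d_{+-}^k\circ i^k\equiv\mathrm{id}\pmod{\ker(i^k)}$, from which $i^k$ injective immediately implies $d_{+-}^k$ surjective (since $i^k$ becomes a genuine section). The symmetric lift starting from $(d_{+-}\beta,0,0)$ for a cocycle $\beta\in Hom_+^k(\epsilon^1_-,\epsilon^2_-)$ produces the companion identity $i^k\circ d_{+-}^k=\mathrm{id}$ on $\mathrm{image}(i^k)=\ker(d_{0-}^{*,k})$, so $i^k$ surjective implies $d_{+-}^k$ injective.

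To get the two reverse implications I would combine these chain-level identities with a short snake-lemma-style chase using exactness of (\ref{les1}): if $d_{+-}^k$ is surjective then every element of $\ker(i^k)$ lies in $\mathrm{image}(d_{+-}^k)$, and applying $i^k$ and using that $i^k\circ d_{+-}^k$ restricts to the identity on $\mathrm{image}(i^k)$ forces $\ker(i^k)=0$; the dual argument handles $d_{+-}^k$ injective $\Rightarrow$ $i^k$ surjective. The hardest step will be the careful sign bookkeeping in the Cthulhu differential: the strict chain-map identity $m_1d_{+-}=d_{+-}m_1$ has to be reconciled with the $(+,-)$ entry $m_1d_{+-}+d_{+0}d_{0-}+d_{+-}m_1=0$ of $d^2=0$, since it is precisely this reconciliation that makes $d_{+0}d_{0-}$ vanish on $m_1$-cocycles and lets the whole chain-level argument go through.
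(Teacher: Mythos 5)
Your chain-level identification of $i^k$ via acyclicity is correct, and the identity $d_{+-}^k(i^k[\alpha])=[\alpha]-[d_{+0}\delta]$ does hold, with $d_{+0}\delta$ an $m_1$-cocycle. The gap is the next step, the claim that $[d_{+0}\delta]\in\ker(i^k)$. By exactness of (\ref{les1}), $\ker(i^k)$ is the image of $H^k(C_{\mathrm{Morse}}F,d_F)$ under $d_{+0}$, i.e.\ it consists of classes $[d_{+0}\delta_0]$ with $\delta_0$ a $d_F$-\emph{cocycle}; your $\delta$ instead satisfies $d_F\delta=-d_{0-}\beta$, which need not vanish, and there is in general no way to rewrite $(d_{+0}\delta,0,0)$ as a $Cth$-boundary with vanishing third component (trying to correct $\delta$ by a primitive of $d_{0-}\beta$ just reproduces the same obstruction). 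In fact the identity $d_{+-}^k\circ i^k\equiv\mathrm{id}\pmod{\ker i^k}$ is \emph{not} a consequence of the properties you invoke (acyclicity, upper-triangularity, $d_{+-}$ a chain map, $d_{+0}d_{0-}=0$): over $\bbF=\bbZ_2$ take $Hom_+(\epsilon^1_+,\epsilon^2_+)=\bbF\langle a\rangle$ and $Hom_+(\epsilon^1_-,\epsilon^2_-)=\bbF\langle c\rangle$ concentrated in degree $0$ with $m_1=0$, take $C_{\mathrm{Morse}}F$ to be $\bbF\langle b\rangle\to\bbF\langle b'\rangle$ in degrees $0,1$ with $d_F(b)=b'$, and set $d_{+0}(b)=a$, $d_{0-}(c)=b'$, $d_{+-}=0$. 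All your hypotheses hold and the total complex is acyclic (the degree-zero differential sends $(0,b,0)\mapsto(a,b',0)$ and $(0,0,c)\mapsto(0,b',0)$, an isomorphism), the lift of $(a,0,0)$ is $(0,b,c)$, so $i^0$ is an isomorphism, yet $[d_{+0}\delta]=[a]\neq0$ while $\ker(i^0)=0$, and $d_{+-}^0=0$ is neither injective nor surjective. So your key identity, and with it every implication you derive from it, cannot be obtained by these formal manipulations alone; some further input about the actual Cthulhu complex is required. There is also an independent gap in the reverse implications: a $d_{+-}^k$-preimage of an element of $\ker(i^k)$ need not lie in $\mathrm{image}(i^k)$, so you cannot apply ``$i^k\circ d_{+-}^k=\mathrm{id}$ on $\mathrm{image}(i^k)$'' to it.

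The paper's proof takes a different route that avoids any composition identity between $i^k$ and $d_{+-}^k$. It presents $Cth$ as the cone of $\Phi=d_{+-}+d_{0-}\colon Hom_+(\epsilon^1_-,\epsilon^2_-)\to Cone(d_{+0})$, so acyclicity makes $\Phi^k$ an isomorphism on cohomology; using $d_{+0}\circ d_{0-}=0$ (a consequence of Theorem \ref{equal}) it decomposes $\Phi^k$ as $d_{+-}^k+d_{0-}^k$ and runs a dimension count over the field $\bbF$ to conclude that $d_{+-}^k$ is injective exactly when the connecting map $d_{0-}^k$ vanishes, i.e.\ exactly when $i^k$ is surjective; the other equivalence comes from the symmetric presentation of $Cth$ as the cone of $\Psi=d_{+0}+d_{+-}\colon Cone(d_{0-})\to Hom_+(\epsilon^1_+,\epsilon^2_+)$. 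Note that your proposal never uses finite-dimensionality over $\bbF$, whereas the ``if and only if'' genuinely requires a counting argument of this kind; if you want to pursue your approach you would at minimum need to import that dimension count, and you would still have to justify why the obstruction class $[d_{+0}\delta]$ dies, which is exactly the directness of the decomposition of $H^kCone(d_{+0})$ that the paper's argument turns on.
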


\begin{proof}
We will first prove that $i^k$ is surjective if and only if $d_{+-}^k$ is injective for fixed $k$.

Consider the Cthulhu chain complex as a mapping cone of $\Phi: Hom_+(\epsilon^1_-,\epsilon^2_-) \to Cone (d_{+0})$, where $\Phi = d_{+-} + d_{0-}$.
The trivial cohomology of the Cthulhu chain complex implies that $\Phi$ induces isomorphisms
$$\Phi^k: H^k Hom_+(\epsilon^1_-,\epsilon^2_-) \to H^kCone (d_{+0}) \textrm{ for } k\in\bbZ .$$
We have the following diagram commutes.
\diag{
\cdots\arr & {H^k Hom_+(\epsilon^1_+,\epsilon^2_+) }\arr^{i_k} & H^k Cone (d_{+0})\arr^{j_k} &{ H^{k+1}(C_{\textrm{Morse}} F)} \arr
 &\cdots\\
  & & H^kHom_+(\epsilon^1_-,\epsilon^2_-)  \aru^{\Phi^k}_{\cong} \ar[ur]_{d^k_{0-}}
} 

Comparing to the long exact sequence (\ref{les1}), we know that  $i^k= (\Phi^k)^{-1}\circ i_k.$
Notice that $d_{+-}$ is a chain map and thus $d_{+0}\circ d_{0-}=0$.
It is not hard to show that $\Phi^k= d^k_{+-}+ d^k_{0-}$.
Thus $$H^kCone (d_{+0}) \cong d^k_{+-}\big( H^k Hom_+(\epsilon^1_-,\epsilon^2_-)\big) \oplus d^k_{0-}\big( H^k Hom_+(\epsilon^1_-,\epsilon^2_-)\big). $$
Since we are working over the field $\bbF$, we have the following relation on dimensions:
$$
\begin{array}{rl}
\dim \big( H^k Hom_+(\epsilon^1_-,\epsilon^2_-)\big) &=\ \dim \big( H^kCone (d_{+0}) \big)\vspace{0.05in}\\
&=\  \dim \big( d^k_{+-}\big( H^k Hom_+(\epsilon^1_-,\epsilon^2_-)\big)\big) + \dim \big(d^k_{0-}\big( H^k Hom_+(\epsilon^1_-,\epsilon^2_-)\big)\big).
\end{array}$$
Thus $ \dim \big(d^k_{+-}\big( H^k Hom_+(\epsilon^1_-,\epsilon^2_-)\big)\big) \leq \dim \big(H^k Hom_+(\epsilon^1_-,\epsilon^2_-)\big)$ and the equality holds if and only if $d^k_{0-}=0$, which is equivalent to the condition that $i^k$  is surjective.
Hence $d^k_{+-}$ is injective if and only if $i^k$  is surjective.

The proof of the statement that $i^k$ is injective if and only if $d_{+-}^k$ is surjective is basically the same if we consider the Cthulhu chain complex as a mapping cone of $\Psi: Cone(d_{0-}) \to Hom_+(\epsilon^1_+, \epsilon^2_+)$, where $\Psi= d_{+0} + d_{+-}$.
\end{proof}

Thanks to Theorem \ref{equal},  we know that $d_{+-}$ agrees with $f_1$. 
Theorem \ref{main1} shows that $i^0$ is both injective and surjective. Therefore we have the following corollary.
\begin{cor}\label{iso}
Let $f^*$ denote the induced map of $f_1$ on cohomology.
Then $f^*$ restricted on $0$ degree cohomology  $$f^*: H^0 Hom_+(\epsilon^1_-,\epsilon^2_-)\to H^0 Hom_+(\epsilon^1_+,\epsilon^2_+)$$
 is an isomorphism.

\end{cor}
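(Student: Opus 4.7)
The plan is to combine the three results that immediately precede this corollary: Theorem \ref{equal}, Theorem \ref{main1}, and Theorem \ref{thm 1}. Since Theorem \ref{equal} identifies $d_{+-}$ with $f_1$ as chain maps (with the prescribed Morse function), the induced maps on cohomology agree, i.e. $d^k_{+-} = f^*$ in every degree $k$. In particular, proving that $f^*$ is an isomorphism in degree $0$ is equivalent to proving that $d^0_{+-}$ is an isomorphism.

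First I would invoke Theorem \ref{main1}, which tells us that the connecting map
$$i^0: H^0 Hom_+(\epsilon^1_+,\epsilon^2_+) \to H^0 Hom_+(\epsilon^1_-,\epsilon^2_-)$$
coming from the long exact sequence \eqref{les1} is an isomorphism; in particular, $i^0$ is simultaneously injective and surjective. Then I would apply Theorem \ref{thm 1} in degree $k=0$: the surjectivity of $i^0$ gives injectivity of $d^0_{+-}$, while the injectivity of $i^0$ gives surjectivity of $d^0_{+-}$. Hence $d^0_{+-}$ is an isomorphism, and translating back via Theorem \ref{equal} yields the desired conclusion for $f^*$.

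There is really no analytic difficulty left to confront here — all the hard work (the acyclicity of the Cthulhu complex, the double cone structure, the geometric identification of $d_{+-}$ with $f_1$, and the computation of $H^*(\Sigma,\Lambda_-)$) has already been done in the previous sections. The only thing one should double-check when writing this up is the compatibility of conventions: that the map labeled $f^*$ in the statement is indeed $d^0_{+-}$ on the nose (and not, say, its transpose in some identification), which is exactly the content of Theorem \ref{equal} given the chosen Morse function from Section \ref{Pair}. Once that bookkeeping is made explicit, the proof reduces to a two-line application of Theorems \ref{equal}, \ref{main1}, and \ref{thm 1}.
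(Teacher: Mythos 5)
Your argument is exactly the paper's: the corollary is stated immediately after the observation that $d_{+-}=f_1$ (Theorem \ref{equal}) together with the fact that $i^0$ is both injective and surjective (Theorem \ref{main1}), so Theorem \ref{thm 1} in degree $0$ gives that $d_{+-}^0=f^*$ is surjective and injective. The proposal is correct and matches the paper's reasoning.
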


\begin{thm}\label{injectivity}
Let  $\Sigma$ be an exact Lagrangian cobordism from a Legendrian knot $\Lambda_-$ to a Legendrian knot $\Lambda_+$ with Maslov number $0$.
The $A_{\infty}$ category map $f: \calA ug_+(\Lambda_-) \to \calA ug_+(\Lambda_+)$ induced by the exact Lagrangian cobordism $\Sigma$  is injective on the level of equivalence classes of objects. 

In other words, for $i=1,2$,
assume $\epsilon^i_-$ is an augmentation of   $\calA(\Lambda_-)$ with a single base point 
and $\epsilon^i_+$ is the  augmentation of $\calA(\Lambda_+)$ with a single base point induced by $\Sigma$.
If  $\epsilon^1_+$ and $\epsilon^2_+$ are equivalent in $\calA ug_+(\Lambda_+)$, then $\epsilon^1_-$ and $\epsilon^2_-$ are equivalent in $\calA ug_+(\Lambda_-)$.
\end{thm}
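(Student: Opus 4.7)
The plan is to lift the equivalence data witnessing $\epsilon^1_+\sim\epsilon^2_+$ back to the $\Lambda_-$ side via the isomorphism $f^*$ of Corollary \ref{iso}, and then verify the equivalence relations hold downstairs using the fact that $f$ is a unital $A_\infty$ functor.

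First, by hypothesis there exist classes $[\alpha]\in H^0Hom_+(\epsilon^1_+,\epsilon^2_+)$ and $[\beta]\in H^0Hom_+(\epsilon^2_+,\epsilon^1_+)$ with $m_2([\alpha],[\beta])=[e_{\epsilon^1_+}]$ and $m_2([\beta],[\alpha])=[e_{\epsilon^2_+}]$. Corollary \ref{iso}, applied to the pairs $(\epsilon^1_-,\epsilon^2_-)$ and $(\epsilon^2_-,\epsilon^1_-)$, tells us that
$$f^*\colon H^0Hom_+(\epsilon^1_-,\epsilon^2_-)\xrightarrow{\;\cong\;}H^0Hom_+(\epsilon^1_+,\epsilon^2_+),\qquad f^*\colon H^0Hom_+(\epsilon^2_-,\epsilon^1_-)\xrightarrow{\;\cong\;}H^0Hom_+(\epsilon^2_+,\epsilon^1_+)$$
are isomorphisms. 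I would then take $[\alpha']:=(f^*)^{-1}[\alpha]$ and $[\beta']:=(f^*)^{-1}[\beta]$ as the candidate equivalence data on $\Lambda_-$.

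Next, I would check the two equivalence relations. Since $f$ is a unital $A_\infty$ functor (Section \ref{augcat}, via \cite[Proposition 3.29]{NRSSZ}), the induced cohomology functor $\tilde f$ satisfies $f^*(m_2(-,-))=m_2(f^*(-),f^*(-))$ and sends units to units, i.e.\ $f^*[e_{\epsilon^i_-}]=[e_{\epsilon^i_+}]$. Therefore
$$f^*\bigl(m_2([\alpha'],[\beta'])\bigr)=m_2([\alpha],[\beta])=[e_{\epsilon^1_+}]=f^*[e_{\epsilon^1_-}],$$
and likewise $f^*(m_2([\beta'],[\alpha']))=f^*[e_{\epsilon^2_-}]$. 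Now I would apply Corollary \ref{iso} again, this time to the pairs $(\epsilon^1_-,\epsilon^1_-)$ and $(\epsilon^2_-,\epsilon^2_-)$, to conclude that $f^*$ is injective on $H^0Hom_+(\epsilon^i_-,\epsilon^i_-)$. From this injectivity we deduce $m_2([\alpha'],[\beta'])=[e_{\epsilon^1_-}]$ and $m_2([\beta'],[\alpha'])=[e_{\epsilon^2_-}]$, which is exactly the definition of $\epsilon^1_-\sim\epsilon^2_-$ in $\calA ug_+(\Lambda_-)$.

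The only nontrivial input is Corollary \ref{iso} (equivalently the $H^0$ half of Theorem \ref{main1}), whose proof already combines the Cthulhu long exact sequence with Sabloff duality and the identification $d_{+-}=f_1$ of Theorem \ref{equal}. Once that isomorphism on $H^0$ is in hand, the rest of the argument is a routine diagram chase using the unitality and $m_2$-compatibility of the $A_\infty$ functor $f$, so there is no additional geometric obstacle at this stage.
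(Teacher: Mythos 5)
Your proposal is correct and follows essentially the same route as the paper's proof: lift $[\alpha_+],[\beta_+]$ through the $H^0$ isomorphism of Corollary \ref{iso}, use compatibility of $f^*$ with $m_2$ and with units (the paper justifies the latter by noting $f_1(y_-^{\vee})=y_+^{\vee}$), and conclude via injectivity of $f^*$ on $H^0Hom_+(\epsilon^i_-,\epsilon^i_-)$. The only discrepancy is a harmless swap of which unit $m_2([\alpha],[\beta])$ is required to equal relative to the paper's convention for equivalence of objects.
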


\begin{proof}
Since $\epsilon^1_+$ and $\epsilon^2_+$ are equivalent in $\calA ug_+(\Lambda_+)$, there exist
$[\alpha_+] \in H^0Hom_+(\epsilon^1_+, \epsilon^2_+)$ and $[\beta_+] \in H^0Hom_+(\epsilon^2_+, \epsilon^1_+)$ such that 
$$[m_2(\alpha_+, \beta_+)] = [e_{\epsilon^2_+}] \in H^0Hom_+(\epsilon^2_+, \epsilon^2_+)$$
 and 
 $$[m_2( \beta_+,\alpha_+)] = [e_{\epsilon^1_+}] \in H^0Hom_+(\epsilon^1_+, \epsilon^1_+).$$
 
Corollary \ref{iso} shows that $f^* :  H^0Hom_+(\epsilon^1_-, \epsilon^2_-) \to H^0Hom_+(\epsilon^1_+, \epsilon^2_+) $ is an isomorphism. 
 Hence there exists $[\alpha_-] \in H^0Hom_+(\epsilon^1_-, \epsilon^2_-)$ such that 
 $$f^*([\alpha_-])=[\alpha_+] \in H^0Hom_+(\epsilon^1_+, \epsilon^2_+).$$ 
Similarly,  there exists $[\beta_-] \in H^0Hom_+(\epsilon^2_-, \epsilon^1_-)$ such that 
 $$f^*([\beta_-])=[\beta_+] \in H^0Hom_+(\epsilon^2_+, \epsilon^1_+).$$
Moreover, we have $$f^*[m_2(\alpha_-, \beta_-)]=
 m_2(f^*([\alpha_-]),f^*([\beta_-]))=m_2([\alpha_+],[\beta_+])=[e_{\epsilon^2_+}] \in H^0Hom_+(\epsilon^2_+, \epsilon^2_+)
.$$

Notice that $f$ sends $y_{-}^{\vee} \in  Hom^0_+(\epsilon^2_-, \epsilon^2_-) $ 
to $y_{+}^{\vee} \in Hom^0_+(\epsilon^2_+, \epsilon^2_+)$
and hence $f^*[e_{\epsilon^2_-}]=[e_{\epsilon^2_+}]$.
By Corollart \ref{iso}, the map $f^* :  H^0Hom_+(\epsilon^2_-, \epsilon^2_-) \to H^0Hom_+(\epsilon^2_+, \epsilon^2_+) $ is an isomorphism. 
Hence
$[m_2(\alpha_-, \beta_-)]=[e_{\epsilon^2_-}] \in  H^0Hom_+(\epsilon^2_-, \epsilon^2_-)$. 
Similarly, we have
$[m_2( \beta_-,\alpha_-)]=[e_{\epsilon^1_-}] \in  H^0Hom_+(\epsilon^1_-, \epsilon^1_-)$. 
Therefore $\epsilon^1_-$ and $\epsilon^2_-$ are equivalent in $\calA ug_+(\Lambda_-)$.
\end{proof}
In addition, the exact Lagrangian cobordism $\Sigma$ described above also induces a category functor  on the cohomology category as described in Section \ref{Acat} $$\tilde{f}: H^*\calA ug_+(\Lambda_-) \to H^*\calA ug_+(\Lambda_+).$$
We have the following  statement.

\begin{thm}\label{cocat}
Let  $\Sigma$ be an exact Lagrangian cobordism from a Legendrian knot $\Lambda_-$ to a Legendrian knot $\Lambda_+$ with Maslov number $0$.
The corresponding cohomology category map $ \tilde{f}: H^* \calA ug_+(\Lambda_-) \to H^* \calA ug_+(\Lambda_+)$ induced by  $\Sigma$  is faithful.
Moreover, if $\chi(\Sigma)=0$, this functor is fully faithful. 
\end{thm}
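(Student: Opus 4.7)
The plan is to reduce Theorem \ref{cocat} to Theorem \ref{equal} and Theorem \ref{thm 1}, together with the long exact sequence (\ref{les1}) from Corollary \ref{lescor}. The central observation is that Theorem \ref{equal} identifies the geometric map $d_{+-}$ in the Cthulhu differential with the algebraically defined functor $f_1$, for the Morse function $F$ constructed in Section \ref{Pair}. Passing to cohomology gives $f^* = d_{+-}^*$, so faithfulness of $\tilde f$ reduces to showing that $d_{+-}^k$ is injective for every $k \in \bbZ$ and every pair of augmentations $\epsilon^1_-, \epsilon^2_-$, while the fully faithful statement asks for $d_{+-}^k$ to be an isomorphism for every $k$.

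For faithfulness, fix a pair $\epsilon^1_-, \epsilon^2_-$ and consider the long exact sequence (\ref{les1}). Since $\Sigma$ is connected (Remark \ref{connect}) and $\Lambda_-$ is a nonempty Legendrian knot, $H^k(\Sigma, \Lambda_-)$ vanishes for $k \neq 1$ and equals $\bbF^{-\chi(\Sigma)}$ for $k=1$. This immediately forces the connecting map $H^kHom_+(\epsilon^1_-, \epsilon^2_-) \to H^{k+1}(\Sigma, \Lambda_-)$ to be zero for every $k \neq 0$; for $k=0$ this connecting map is also zero, because $i^0$ is already known to be an isomorphism by Theorem \ref{main1}. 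Hence $i^k$ is surjective for every $k$, and the relevant half of Theorem \ref{thm 1} translates this into injectivity of $d_{+-}^k$ for every $k$, proving that $\tilde f$ is faithful.

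For the fully faithful claim, assume $\chi(\Sigma) = 0$. Then $H^*(\Sigma, \Lambda_-) = 0$ identically, and the long exact sequence (\ref{les1}) shows that every $i^k$ is both injective and surjective. Applying both halves of Theorem \ref{thm 1} then yields that each $d_{+-}^k$ is an isomorphism, so $f^*$ is an isomorphism on every morphism space in $H^*\calA ug_+(\Lambda_-)$, which means $\tilde f$ is fully faithful.

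I do not foresee a serious technical obstacle here: once Theorem \ref{equal} and Theorem \ref{thm 1} are available, the argument is essentially a uniform diagram chase across all degrees, and Corollary \ref{lescor} supplies exactly the vanishing one needs. The only point of care is that faithfulness must be verified for every pair of augmentations simultaneously, but Theorems \ref{equal}, \ref{thm 1} and \ref{main1} are already set up for an arbitrary pair $(\epsilon^1_-, \epsilon^2_-)$ with induced $\epsilon^i_+ = \epsilon^i_- \circ \phi_\Sigma$, which are precisely the morphism spaces between objects in the image of $\tilde f$ that need to be controlled.
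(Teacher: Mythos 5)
Your proposal is correct and follows essentially the same route as the paper: identify $f^*$ with $d_{+-}^*$ via Theorem \ref{equal}, use the long exact sequence (\ref{les1}) (together with Theorem \ref{main1} for the degree-zero case) to get surjectivity of every $i^k$, and then apply Theorem \ref{thm 1} to convert this into injectivity (resp. bijectivity when $\chi(\Sigma)=0$) of $d_{+-}^k$. Your version is in fact slightly more careful than the paper's, which asserts surjectivity of all $i^k$ directly from the long exact sequence without flagging that the $k=0$ case requires Theorem \ref{main1}.
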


\begin{proof}
Notice that  the category map $\tilde{f}$ restricted on the level of morphisms is 
$$f^*: H^*Hom_+(\epsilon^1_-, \epsilon^2_-) \to  H^*Hom_+(\epsilon^1_+, \epsilon^2_+).$$
The long exact sequence (\ref{les1}) tells us that $i^k$ are surjective for all the $k \in \bbZ$. 
By Theorem \ref{thm 1}, we know that $f^*$ is injective. Therefore  $\tilde{f}$ is faithful.

In particular, if $\chi(\Sigma)=0$, Theorem \ref{main1} implies that $i^k$ are isomorphisms for all the $k \in \bbZ$.
Therefore, by Theorem \ref{thm 1}, the map $f^*$ is an isomorphism, which implies that $\tilde{f}$ is fully faithful.
\end{proof}

As a result of Theorem \ref{injectivity}, there is an induced map from the equivalence classes of augmentations of 
$\Lambda_-$ to the equivalence classes of augmentations of $\calA ug_+(\Lambda_+)$.
Thus the number of equivalence classes of augmentations of $\calA(\Lambda_-)$ is less than or equal to
the number of  equivalence classes of augmentations of $\calA(\Lambda_+)$. 
However, the equivalence classes of augmentations is difficult to count in general. 
Ng, Rutherford, Shende and Sivek \cite{NRSS} introduced another way to count objects: the homotopy cardinality of $\pi_{\ge 0} \calA ug_+(\Lambda; \bbF_q)^*$, where $\bbF_q$ is a finite field.
This can be computed using  ruling polynomials.

The  {\bf homotopy cardinality} is defined by
$$\pi_{\ge 0} \calA ug_+(\Lambda; \bbF_q)^* = \displaystyle{
\sum_{[\epsilon] \in \calA ug_+(\Lambda; \bbF_q)/ \sim} \frac{1}{|Aut(\epsilon)|} \cdot 
\frac{|H^{-1}Hom_+(\epsilon, \epsilon)| \cdot |H^{-3}Hom_+(\epsilon, \epsilon)| \cdots}
{|H^{-2}Hom_+(\epsilon, \epsilon)| \cdot |H^{-4}Hom_+(\epsilon, \epsilon)| \cdots}}
,$$
where $[\epsilon]$ is the equivalence class of $\epsilon$ in the augmentation category $\calA ug_+(\Lambda)$
and $|Aut(\epsilon)|$ is the number of invertible elements in $H^0Hom_+(\epsilon, \epsilon)$.

\begin{cor}
Let $\Sigma$ be a spin exact Lagrangian cobordism from a Legendrian knot $\Lambda_-$ to a Legendrian knot $\Lambda_+$ 
with Maslov number $0$. Then for any finite field $\bbF_q$, we have
$$\pi_{\ge 0} \calA ug_+(\Lambda_-; \bbF_q)^* \leq \pi_{\ge 0} \calA ug_+(\Lambda_+; \bbF_q)^*.$$
\end{cor}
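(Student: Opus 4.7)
The plan is to compare the two homotopy cardinalities term by term along the map induced by $\Sigma$ on equivalence classes of augmentations. By Theorem \ref{injectivity}, the functor $f$ descends to a well-defined injective map $[\epsilon_-] \mapsto [\epsilon_+] := [\epsilon_- \circ \phi_{\Sigma}]$ on the set of equivalence classes of objects. Since every summand in the definition of $\pi_{\ge 0}\calA ug_+(\Lambda; \bbF_q)^*$ is a positive rational number, once I show that the summand indexed by $[\epsilon_-]$ equals the summand indexed by its image $[\epsilon_+]$, the desired inequality will follow immediately by summing over the (potentially strictly larger) collection of equivalence classes on the $\Lambda_+$ side.

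To verify that corresponding summands agree, I will fix $\epsilon_-$, set $\epsilon_+ := \epsilon_- \circ \phi_{\Sigma}$, and apply Theorem \ref{main1} (equivalently Corollary \ref{LCH}) in the diagonal case $\epsilon_-^1 = \epsilon_-^2 = \epsilon_-$. This yields $H^k Hom_+(\epsilon_+, \epsilon_+) \cong H^k Hom_+(\epsilon_-, \epsilon_-)$ for all $k \neq 1$. In particular the isomorphism holds for every negative degree $k$, so the alternating product
$$\frac{|H^{-1}Hom_+(\epsilon, \epsilon)| \cdot |H^{-3}Hom_+(\epsilon, \epsilon)| \cdots}{|H^{-2}Hom_+(\epsilon, \epsilon)| \cdot |H^{-4}Hom_+(\epsilon, \epsilon)| \cdots}$$
appearing in the definition of the homotopy cardinality is literally identical for $\epsilon_-$ and $\epsilon_+$.

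It remains to match the automorphism factors $|Aut(\epsilon_{\pm})|$. By Corollary \ref{iso}, the restriction $f^*: H^0 Hom_+(\epsilon_-, \epsilon_-) \to H^0 Hom_+(\epsilon_+, \epsilon_+)$ is an isomorphism of vector spaces. The $A_{\infty}$ formalism reviewed in Section \ref{Acat} ensures that $f^*$ intertwines the composition product $m_2$, and the explicit formula (\ref{f_1}) shows $f_1(y_-^{\vee}) = y_+^{\vee}$, so $f^*$ carries the unit $e_{\epsilon_-} = -y_-^{\vee}$ to $e_{\epsilon_+}$. Hence $f^*$ is a unital ring isomorphism and therefore restricts to a bijection between the groups of units in the two rings, giving $|Aut(\epsilon_-)| = |Aut(\epsilon_+)|$. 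Combined with the degreewise identification in negative degrees, this shows that the summands for $[\epsilon_-]$ and $[\epsilon_+]$ coincide, completing the argument. Since all of the essential inputs have already been established, I do not expect any substantial obstacle; the corollary is primarily a careful bookkeeping exercise, with the only mild subtlety being that one must use the full unital ring structure on $H^0 Hom_+$ rather than merely its additive structure.
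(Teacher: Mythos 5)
Your proof is correct and follows essentially the same route as the paper's: inject equivalence classes via Theorem \ref{injectivity}, match the cohomology groups in negative degrees via Theorem \ref{main1}, and match the automorphism counts in degree $0$. You are in fact slightly more careful than the paper, which passes directly from the vector-space isomorphism $H^0Hom_+(\epsilon_-,\epsilon_-)\cong H^0Hom_+(\epsilon_+,\epsilon_+)$ to $|Aut(\epsilon_-)|=|Aut(\epsilon_+)|$; your observation that $f^*$ is a \emph{unital ring} isomorphism (combining Corollary \ref{iso} with the $m_2$-compatibility and $f_1(y_-^{\vee})=y_+^{\vee}$) is the correct justification for that step, since the count of invertible elements depends on the multiplicative structure and not merely on the dimension.
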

\begin{proof}
Assume $[\epsilon_-]$ is an equivalence class in $\calA ug_+(\Lambda_-; \bbF_q)$ 
and $[\epsilon_+]$ is the induced equivalence class in $\calA ug_+(\Lambda_+; \bbF_q)$.
Theorem \ref{main1} implies 
$$H^k Hom_+(\epsilon_-,\epsilon_-) \cong H^k Hom_+(\epsilon_+,\epsilon_+) \textrm{ for } k<1.$$
In particular, we have
$H^0 Hom_+(\epsilon_-,\epsilon_-) \cong H^0 Hom_+(\epsilon_+,\epsilon_+)$,
which implies $|Aut(\epsilon_-) |= |Aut(\epsilon_+)|$.
Notice that $\calA ug_+(\Lambda_+; \bbF_q)$ may have more equivalence classes than 
$\calA ug_+(\Lambda_-; \bbF_q)$.
Therefore, we have
$$\pi_{\ge 0} \calA ug_+(\Lambda_-; \bbF_q)^* \leq \pi_{\ge 0} \calA ug_+(\Lambda_+; \bbF_q)^*.$$
\end{proof}

From \cite[Corollary 2]{NRSS}, this cardinality can be related to the ruling polynomial in the following way:
$$\pi_{\ge 0} \calA ug_+(\Lambda; \bbF_q)^*= q^{tb(\Lambda)/2} R_{\Lambda}(q^{1/2} - q^{-1/2}).$$

Recall that a {\bf normal ruling} $R$ is a decomposition of the front projection of $\Lambda$ into embedded disks connected by switches that satisfy some requirements (see details in \cite{Cheruiling}).
The {\bf ruling polynomial} is defined by
$$R_{\Lambda}(z)=\displaystyle{\sum_{R} z^{\# (\textrm{switches}) - \# (\textrm{diskes})}}.$$

\begin{cor}\label{ruling}
Suppose there is a spin exact Lagrangian cobordism from a Legendrian knot $\Lambda_-$ to a Legendrian knot $\Lambda_+$ 
with Maslov number $0$. 
Then the ruling polynomials satisfy:
$$R_{\Lambda_-}(q^{1/2} - q^{-1/2}) \leq q^{-\chi(\Sigma)/2} R_{\Lambda_+}(q^{1/2} - q^{-1/2}),$$
for any $q$ that is a power of a prime number.
\end{cor}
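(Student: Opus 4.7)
The plan is to combine three ingredients that are already in hand: the previous corollary bounding homotopy cardinalities, the Ng--Rutherford--Shende--Sivek identity expressing the homotopy cardinality in terms of the ruling polynomial, and Chantraine's relation between the Thurston--Bennequin numbers and the Euler characteristic of the cobordism.

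First, I would invoke the previous corollary, which gives
\[
\pi_{\ge 0}\,\calA ug_+(\Lambda_-;\bbF_q)^{*} \;\le\; \pi_{\ge 0}\,\calA ug_+(\Lambda_+;\bbF_q)^{*}
\]
whenever $\Sigma$ is a spin exact Lagrangian cobordism with Maslov number $0$ from $\Lambda_-$ to $\Lambda_+$. Next, I would substitute the identity from \cite[Corollary 2]{NRSS},
\[
\pi_{\ge 0}\,\calA ug_+(\Lambda;\bbF_q)^{*} \;=\; q^{\,tb(\Lambda)/2}\, R_{\Lambda}(q^{1/2}-q^{-1/2}),
\]
applied to both $\Lambda_+$ and $\Lambda_-$. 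This converts the homotopy cardinality inequality into
\[
q^{\,tb(\Lambda_-)/2}\, R_{\Lambda_-}(q^{1/2}-q^{-1/2}) \;\le\; q^{\,tb(\Lambda_+)/2}\, R_{\Lambda_+}(q^{1/2}-q^{-1/2}).
\]

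Finally, I would divide both sides by $q^{tb(\Lambda_-)/2}$ and apply Chantraine's formula (\ref{tb}), namely $tb(\Lambda_+) - tb(\Lambda_-) = -\chi(\Sigma)$, to rewrite the exponent on the right as $-\chi(\Sigma)/2$. This yields the desired inequality
\[
R_{\Lambda_-}(q^{1/2}-q^{-1/2}) \;\le\; q^{-\chi(\Sigma)/2}\, R_{\Lambda_+}(q^{1/2}-q^{-1/2}).
\]
Since there is no real obstacle here beyond citing the three ingredients correctly, the argument is essentially a direct substitution; the conceptual content has already been packaged into the previous corollary (which uses Theorem \ref{injectivity} and Theorem \ref{main1}) and into the cited identity of \cite{NRSS}. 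The only minor point to verify is that the parameter $q$ is indeed a prime power so that $\bbF_q$ is a valid choice of coefficient field in both the previous corollary and the identity of \cite{NRSS}, which is exactly the hypothesis stated.
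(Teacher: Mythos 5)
Your argument is correct and is exactly the route the paper intends: the preceding corollary on homotopy cardinalities, the identity $\pi_{\ge 0}\,\calA ug_+(\Lambda;\bbF_q)^{*}=q^{tb(\Lambda)/2}R_{\Lambda}(q^{1/2}-q^{-1/2})$ from \cite{NRSS}, and Chantraine's relation $tb(\Lambda_+)-tb(\Lambda_-)=-\chi(\Sigma)$ combine by direct substitution to give the stated inequality. Nothing is missing.
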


When $\Sigma$ is decomposable, i.e. consists of pinch moves and minimum cobordisms \cite{EHK},
there is a map from the rulings of $\Lambda_-$ to rulings of $\Lambda_+$.
For each pinch move or minimal cobordism, 
any normal ruling of the bottom knot gives a normal ruling of the top knot, as shown in Figure \ref{pinch}.
Moreover, different rulings of the bottom knot give different rulings of the top knot.
Therefore the ruling polynomials of $\Lambda_+$ and $\Lambda_-$ satisfy the relation in Corollary \ref{ruling}.
This corollary shows that the result is  true even if the cobordism is not decomposable.

\begin{figure}[!ht]
\labellist
\pinlabel $\uparrow$ at 40 40
\pinlabel {the pinch move} at 40 -10
\pinlabel {the minimum cobordism} at 180 -10
\endlabellist
\includegraphics[width=3in]{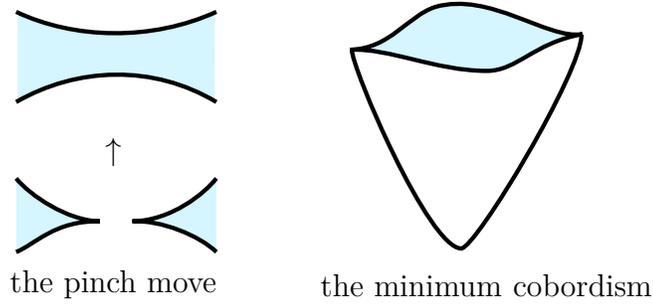}
\vspace{0.2in}
\caption{The relation between rulings of Legendrian submanifolds that are related by a pinch move or a minimum cobordism.}
\label{pinch}
\end{figure}

One can check the atlas in \cite{CNatlas}
 for the ruling polynomials of small crossing Legendrian knots.
This corollary gives a new and easily computable obstruction to the existence of exact Lagrangian cobordisms.
We can use Corollary \ref{ruling} to give a new proof of the follow theorem which is a result in \cite{Chasymmetric} and was reproved by \cite{CNS}.
\begin{thm}[\cite{Chasymmetric}] 
Lagrangian concordance is not a symmetric relation.
\end{thm}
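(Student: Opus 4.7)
The plan is to exhibit an explicit pair of Legendrian knots $(\Lambda_1,\Lambda_2)$ between which a Lagrangian concordance exists in one direction but not the other, and then use Corollary \ref{ruling} (the ruling polynomial obstruction) as the source of the asymmetry. The natural candidate, already advertised earlier in the paper, is to take $\Lambda_1$ to be a max-$tb$ Legendrian unknot $U$ and $\Lambda_2$ to be the Legendrian $m(9_{46})$ knot pictured in Figure \ref{9_46}.

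First I would recall the known construction of a decomposable exact Lagrangian concordance $\Sigma_0$ from $U$ to the Legendrian $m(9_{46})$ knot $\Lambda$: this is built from a sequence of ambient Legendrian isotopies and pinch (``saddle'') moves on a front diagram for $\Lambda$ that reduces it to the standard unknot, and reversing this sequence produces an exact Lagrangian cobordism which has trivial homology and therefore is a concordance. Assuming this concordance, what remains is to rule out any exact Lagrangian concordance in the reverse direction, i.e., from $\Lambda = m(9_{46})$ to $U$. Note that a Lagrangian concordance is by definition an exact Lagrangian cobordism with $\chi(\Sigma) = 0$; such a cobordism between knots automatically has Maslov number $0$ since both ends have rotation number $0$, so Corollary \ref{ruling} applies directly.

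Suppose for contradiction that an exact Lagrangian concordance $\Sigma$ from $\Lambda$ to $U$ exists. Applying Corollary \ref{ruling} with $\chi(\Sigma)=0$, we obtain
\[
R_{\Lambda}(q^{1/2}-q^{-1/2}) \;\le\; R_{U}(q^{1/2}-q^{-1/2})
\]
for every prime power $q$. I would then compute both sides. For the max-$tb$ unknot one has $R_{U}(z)=z^{-1}$, while the max-$tb$ representative of $m(9_{46})$ admits at least two normal rulings of the same small Euler characteristic and (in fact) further non-trivial rulings, yielding a ruling polynomial of the form $R_{\Lambda}(z) = a_{-1}z^{-1} + \text{(nonnegative terms in nonnegative powers of } z\text{)}$ with $a_{-1}\ge 2$. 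Plugging in $z = q^{1/2}-q^{-1/2}$ for any prime power $q\ge 2$ gives $R_{\Lambda}(z) > R_{U}(z) = z^{-1}$, directly contradicting the inequality.

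The conceptual steps are routine; the main obstacle is purely computational, namely verifying the ruling polynomial of $m(9_{46})$. I would either enumerate its normal rulings directly from the front diagram in Figure \ref{9_46}, or, more cleanly, invoke the atlas of \cite{CNatlas}, so that the numerical comparison becomes a one-line check. Combined with the existence of $\Sigma_0$ from $U$ to $\Lambda$, the nonexistence of a concordance from $\Lambda$ to $U$ shows that Lagrangian concordance is not symmetric.
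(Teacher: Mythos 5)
Your proposal is correct and follows essentially the same route as the paper: the same pair of knots (the max-$tb$ unknot and the Legendrian $m(9_{46})$ of Figure \ref{9_46}), the same pinch-move construction of the forward concordance, and the same appeal to Corollary \ref{ruling} with $\chi(\Sigma)=0$ to exclude the reverse one. The only difference is cosmetic: you evaluate the ruling polynomials as $R_{U}(z)=z^{-1}$ versus $R_{\Lambda}(z)$ with leading coefficient at least $2$, while the paper simply quotes the two ruling-polynomial values and compares them.
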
 
\begin{proof}
Consider the Legendrian knot $\Lambda$ of smooth knot type $m(9_{46})$ with maximum Thurston-Bennequin number  and the Legendrian unknot $\Lambda_0$ as shown in Figure \ref{9_46}. 
There is an exact Lagrangian concordance from the $\Lambda_0$ to $\Lambda$ by doing a pinch move at the red line in Figure \ref{9_46} and Legendrian isotopy.
However, there does not exist an exact Lagrangian concordance from $\Lambda$ to $\Lambda_0$ since the ruling polynomial of $\Lambda$ is 2  while the ruling polynomial of $\Lambda_0$  is $1$. 
\end{proof}
\vspace{-0.10in}
\begin{figure}[!ht]
\labellist
\pinlabel $\Lambda$ at 130 0
\pinlabel $\Lambda_0$  at 380 0
\endlabellist
\includegraphics[width=4in]{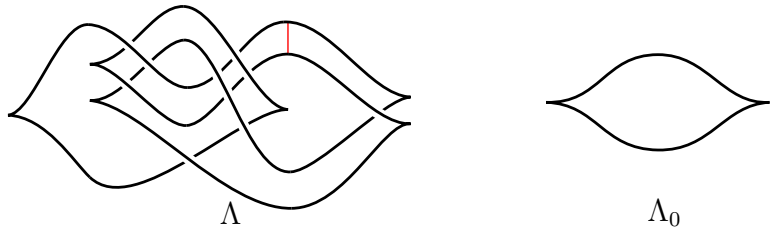}
\caption{Front projections of Legendrian knot $\Lambda$ of knot type $m(9_{46})$ and the Legendrian unknot $\Lambda_0$.}
\label{9_46}
\end{figure}

\vfill

\pagebreak

\bibliographystyle{alpha}

\newcommand{\etalchar}[1]{$^{#1}$}

\end{document}